\documentclass[12pt]{article}

\usepackage{times}

\usepackage[a4paper,text={128mm,185mm}, centering]{geometry}

\usepackage{changepage}

\usepackage{titlesec}

\titleformat{\section}[hang]%
{\bfseries\large}{\thesection.}{1ex}{}%

\titleformat{\subsection}[hang]%
{\bfseries}{\thesubsection}{1ex}{}%

\usepackage{amsthm}
\usepackage{fancyhdr}
\theoremstyle{plain}
\newtheorem{thm}{Theorem}[section]

\newtheorem{cor}[thm]{Corollary}
\newtheorem{lem}[thm]{Lemma}
\newtheorem{prop}[thm]{Proposition}
\newtheorem{defn}[thm]{Definition}

\theoremstyle{remark}
\newtheorem{rmk}[thm]{Remark}
\newtheorem{ex}[thm]{Example} 

\newtheorem{constr}[thm]{Construction} 
\newtheorem{conventions}[thm]{Conventions}
\newtheorem*{thm*}{Theorem}

\usepackage{hyperref} 
\hypersetup{colorlinks=true, linkcolor=black, citecolor=black}  
\usepackage{amsmath, amsfonts, amssymb}
\usepackage{bbm, tikz-cd, tikz}
\tikzset{shorten <>/.style={shorten >=#1,shorten <=#1}}
\usepackage{tkz-euclide}
\usetikzlibrary{decorations.markings,arrows}

\usepackage{array}
\usepackage{float}
\usepackage{multirow}       
\newcolumntype{C}{>{$ \displaystyle } c <{$}} 

\usepackage[style=numeric, backend=biber]{biblatex}

\usepackage{microtype}

\newcommand{\Fun}{{\sf {Fun}}}

\newcommand{\Bibun}{{\sf {Bibun}}}

\newcommand{\sq}{/\!\!/}
\newcommand{\id}{{{\rm id}}}
\newcommand{\Hom}{{\sf Hom}}
\newcommand{\Aut}{{\sf Aut}}
\newcommand{\Bun}{{\sf Bun}}
\newcommand{\Stab}{{\rm Stab}}
\newcommand\EquivTo{\xrightarrow{
		\,\smash{\raisebox{-0.65ex}{\ensuremath{\scriptstyle\sim}}}\,}}

\newcommand{\fl}{{\sf fl}}
\newcommand{\cG}{\mathcal{G}}
\newcommand{\cP}{\mathcal{P}}
\newcommand{\cS}{\mathcal{S}}
\newcommand{\cY}{\mathcal{Y}}
\newcommand{\bU}{\mathbf{U}}
\newcommand{\Act}{{\sf Act}}

\newcommand{\one}{\mathbbm{1}}
\newcommand{\pitwoS}{\pi_{\le 2}(S^2)}
\newcommand{\Hol}{\mathsf{Hol}}

\newcommand{\ptG}{*_{\cG}}
\newcommand{\sigmaG}{g}
\newcommand{\tauG}{g'}
\newcommand{\fcomp}{f_{\id_*, \id_*}}
\newcommand{\ecomp}{e_{\id_*, \id_*}}
\newcommand{\dcomp}{d_{\id_*, \id_*}}

\newcommand{\nonempty}[1]{I^#1_{\ne \varnothing}}

\newcommand{\pr}[1]{\left( #1 \right)}              
\newcommand{\bpr}[1]{\bigl( #1 \bigr)}              
\newcommand{\gam}[1]{\gamma_{#1}}

\title{\vskip 5pt  \bf  FLAT PRINCIPAL 2-GROUP BUNDLES ON $S^2$ AND THEIR HOLONOMY}
\author{\itshape\bfseries { Emily CLIFF, Yves LOLELO, Sandrine ST-CYR}}
\date{October 2026}

\begin{document}
	\maketitle
	
	\cfoot{}
	\thispagestyle{empty}
	\vskip 25pt
	\begin{adjustwidth}{0.5cm}{0.5cm}
		{\small
			{\bf R\'esum\'e.} Soit $\cG(G,A,\alpha)$ un 2-groupe discret, donn\'e comme l'extension cat\'egorique d'un groupe ordinaire $G$ par un groupe ab\'elien $A$. Nous d\'efinissons les fibr\'es $\cG$-principaux sur une vari\'et\'e en termes de donn\'ees de \v {C}ech par rapport \`a un bon recouvrement d'ouverts. Dans le cas de la sph\`ere $S^2$, nous calculons sa bicat\'egorie explicitement et identifions son 2-squelette. Nous la comparons avec la bicat\'egorie d'holonomie $\Fun(* \sq \pitwoS, * \sq \cG)$, et construisons une \'equivalence entre celles-ci.  \\
			{\bf Abstract.} Let $\cG(G,A,\alpha)$ be a discrete 2-group, given as the {categorical} extension of an ordinary group $G$ by an abelian group $A$. We define principal $\cG$-bundles over a manifold in terms of \v{C}ech data over a good open cover. In the case of the manifold $S^2$, we calculate this bicategory explicitly, and identify its 2-skeleton. We compare with the holonomy bicategory $\Fun(* \sq \pitwoS, * \sq \cG)$, and provide a concrete equivalence between the two. \\
			{\bf Keywords.} 2-groups, flat principal bundles, holonomy, fundamental 2-group\\
			{\bf Mathematics Subject Classification (2020).} 	
			18G45,
			18N10,
			55R65,
			53C29.
		}
	\end{adjustwidth}
	
	\section{Introduction}
	Let $G$ be a Lie group and let $X$ be a smooth connected manifold. A well-known result classifies flat principal $G$-bundles on $X$ by their holonomy: more specifically, taking the holonomy of a flat principal $G$-bundle around homotopy classes of loops yields an equivalence of categories
	\begin{align}\label{eq: classical holonomy equivalence}
		\Bun_G^\fl(X) \EquivTo \Fun(*\sq \pi_1(X), *\sq G).
	\end{align}
	
	This paper concerns the generalization of this result to the setting of flat principal bundles for a 2-group $\cG$.  Indeed, it is widely expected that for a discrete\footnote{Since we are interested only in flat bundles in this paper, there is no harm in assuming our 2-group is discrete. On the other hand, once we make this assumption, all bundles are automatically flat. Therefore, we suppress the decoration $\fl$ from our notation.} 2-group $\cG$, the equivalence \eqref{eq: classical holonomy equivalence} is replaced by an equivalence of bicategories 
	\begin{align}\label{eq: 2-group holonomy equivalence}
		\Bun_\cG(X) \EquivTo \Fun(* \sq\pi_{\le 2}(X), *\sq \cG),
	\end{align}
	where $\pi_{\le 2}(X)$ is the \emph{fundamental 2-group} of the manifold $X$, and the right-hand side is the bicategory of functors between one-object bicategories. Our main theorem is the following special case:
	
	\begin{thm}\label{thm: main}
		Let $\cG$ be a discrete 2-group. Then we have an equivalence of bicategories
		\begin{align*}
			\Hol \colon \Bun_\cG(S^2) \EquivTo \Fun(* \sq\pi_{\le 2}(S^2), *\sq \cG).
		\end{align*}
	\end{thm}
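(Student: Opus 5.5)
We will compute both sides explicitly, construct $\Hol$ as a concrete pseudofunctor, and then check that it is an equivalence: essentially surjective on objects, essentially surjective on 1-morphisms, and fully faithful on 2-morphisms.

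\emph{The right-hand side.} First we make $\Fun(*\sq\pitwoS, *\sq\cG)$ explicit. Here $\pitwoS$ is the 2-group with $\pi_1=0$ and $\pi_2=\mathbb{Z}$; since $\pi_1$ is trivial, the $k$-invariant vanishes and we may take a strict model $*\sq\pitwoS$. It has one object, one 1-morphism $\id_*$, and 2-morphisms $\mathbb{Z}$.

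A pseudofunctor $F$ to $*\sq\cG$ consists of three pieces of data:
\begin{itemize}
\item an object $g = F(\id_*)$ of $\cG$;
\item compositor and unitor 2-cells, namely a morphism $\fcomp \colon g\cdot g \Rightarrow g$ in $\cG$ and a unit constraint;
\item a group homomorphism $\mathbb{Z} \to \Aut_\cG(g)\cong A$, i.e.\ an element $a\in A$.
\end{itemize}
The coherence axioms constrain these. The unit constraint forces $g\cong 1$, and up to equivalence we may normalize $g=1$ and the constraints to identities; the associativity axiom then involves $\alpha(1,1,1)$, which is trivial for a normalized cocycle. So we expect the objects to be classified by $a\in A$, and we should double-check whether the interchange law imposes any further condition (it should not, since $A$ is abelian and the $G$-action on $A$ is trivialized at $g=1$).

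Next we compute the morphisms. Pseudonatural transformations $F\Rightarrow F'$ consist of an object $h\in G$ together with a 2-cell compatible with the 2-morphism assignment, which forces $h\cdot a = a'$. Modifications are elements of $A$, subject to a condition we will compute. This yields a 2-skeleton of the form
\[
\text{objects } A/G, \quad \text{automorphisms } \Stab_G(a), \quad \text{2-automorphisms } A,
\]
with possibly some twist coming from $\alpha$; the precise form is to be confirmed by the calculation.

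\emph{The left-hand side.} Next we compute $\Bun_\cG(S^2)$ via \v{C}ech data on an explicit good cover, for instance the cover by the faces of a tetrahedron or octahedron. A bundle is given by transition elements $g_{ij}\in G$, elements $a_{ijk}\in A$ on triple overlaps, and a cocycle condition on quadruple overlaps twisted by $\alpha$. Using the gauge freedom on 1-morphisms, we gauge $g_{ij}$ to $1$ along a spanning tree; simple connectivity of $S^2$ lets us make all $g_{ij}$ trivial. The remaining $a_{ijk}$ then form an $A$-valued 2-cochain, whose class in $H^2(S^2;A)\cong A$ is the invariant. After gauging, the residual symmetry is a global constant $h\in G$ acting on $A$, together with the 2-morphisms, which are $A$-valued 1-cochains modulo the appropriate identifications. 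This gives a 2-skeleton which we match against the one computed for the right-hand side.

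\emph{The functor $\Hol$.} We define $\Hol$ by sending a \v{C}ech bundle to the functor whose value on the generator of $\pi_2$ is the ``2-holonomy'', namely the evaluation of the cocycle $a_{ijk}$, transported via the $g_{ij}$ and $\alpha$, on the fundamental class of the triangulation. On 1-morphisms and 2-morphisms we evaluate the corresponding \v{C}ech data at a basepoint patch.

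\emph{Main obstacle.} We then verify that $\Hol$ is a pseudofunctor, and the main difficulty lies here. Defining the holonomy on non-gauge-fixed data, with nontrivial $g_{ij}$, requires pushing the $a_{ijk}$ to a common patch. The resulting expression must be invariant up to the correct coherence cells, and this invariance depends on the twisted cocycle identity for $\alpha$. We would handle this by first restricting to the full sub-bicategory of gauge-fixed bundles, which is equivalent to the whole bicategory. Once that is done, the equivalence follows from comparing the two 2-skeletons componentwise on $\pi_0$, $\pi_1$ and $\pi_2$.
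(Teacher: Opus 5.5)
Your outline is correct. On the holonomy side, and in the definition of $\Hol$ (2-holonomy on the fundamental class, $t_1$, $\omega_1$), it agrees with the paper. One small difference there: the paper keeps the compositor $\fcomp$ as part of the object data, so it obtains an isomorphism with $\cY$ rather than only a skeleton.

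The real difference is in how you treat the bundle side and how you certify the equivalence.

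\emph{The paper's route.} It works purely combinatorially on the tetrahedral cover. It parametrizes objects and 1-morphisms by generators (the tables in the appendix). It derives the concordance condition from the two ways of computing $\eta_{il}$ across the empty quadruple overlap. It builds the skeleton $\cS$ with a strict inclusion $\Psi$. It then defines $\Hol$ by a closed formula on \emph{arbitrary} \v{C}ech data. The key observation is that the concordance condition is literally $a_{(\rho',\gamma')}=t_1\cdot a_{(\rho,\gamma)}$, and the equivalence follows from the commuting triangle $\Hol\circ\Psi=F$.

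\emph{Your route.} You gauge-fix to $\rho=1$. There, normalization of $\alpha$ kills every $\alpha$-term in the 1-morphism and composition formulas, so $\gamma'=(h\cdot\gamma)\,d\eta$ and $\theta=(h'\cdot\eta)\eta'$; only the associator $\alpha(h'',h',h)$ survives. You then reduce to \v{C}ech cohomology of the nerve ($\partial\Delta^3$ for the tetrahedral cover) and conclude by a Whitehead-type comparison.

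\emph{What each buys.} Your route is more conceptual. The ``main obstacle'' you name disappears, because evaluation on $[S^2]$ kills coboundaries. It also adapts readily to any simply connected $X$ via $H^2(X;A)\cong\Hom(\pi_2(X),A)$. The price is that $\Hol$ on all of $\Bun_\cG(S^2)$ is only a composite with a non-explicit pseudo-inverse of the gauge-fixed inclusion. You therefore lose the paper's explicit formula, with its $\rho_{12}\cdot\gamma_{234}$ and $\alpha(\rho_{12},\rho_{23},\rho_{34})$ corrections, and the identification of concordance with gauge invariance.

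\emph{Points to fix in the write-up.} First, 2-morphisms are not 1-cochains: they are $A$-valued 0-cochains $\omega$ with $\eta'/\eta=d\omega$, so $\pi_2=H^0(S^2;A)=A$. Second, state explicitly that the $\eta$-components drop out of $\pi_1$ because $H^1(S^2;A)=0$, so any two 1-morphisms with the same $h$ are 2-isomorphic. Third, run the $\pi_0,\pi_1,\pi_2$ comparison along $\Hol$ itself. Abstract isomorphisms of these groups would not see the action of $\Stab_G(a)$ on $A$ or the associator $\alpha_\vert$.
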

	
	Other particular versions of the proposed equivalence \eqref{eq: 2-group holonomy equivalence} are already known. For example, results of Schreiber--Waldorf \cite{SchreiberWaldorf2013} and Abad--Vásquez \cite{Abad-Vasquez} imply versions of this result in the case that $\cG$ is a strict 2-group. While it is true that every 2-group is equivalent to a strict 2-group \cite{Hoang-strict}, the strictification procedure requires the addition of many objects to force associativity. Hence, while it is possible to replace a general 2-group $\cG$ in \eqref{eq: 2-group holonomy equivalence} by its strictification on both sides, and obtain a commutative square that forces the equivalence to be true, it would not be obvious how to directly write down the functor that computes the holonomy of a given $\cG$-bundle. 
	
	Furthermore, we have several motivations for wishing to work directly with non-strict 2-groups rather than replacing them with a strictification. First, in its natural form, the fundamental 2-group $\pi_{\le 2}(X)$ is generally not strict, so we already have a non-strict 2-group appearing in our holonomy bicategory. Second, while it is true that any 2-group is equivalent to a strict 2-group, it is \emph{not} true that any homomorphism between 2-groups is equivalent to a strict homomorphism, so that we need to be able to handle weak categorical structures and coherence morphisms in any case. 
	
	
	In the setting of a not necessarily strict 2-group, Berwick-Evans--Cliff--Murray \cite[Proposition 2.37]{BCM} prove the result \eqref{eq: 2-group holonomy equivalence} in the case of $X$ a $K(\pi, 1)$. This is exactly the case where the fundamental 2-group $\pi_{\le 2}(X)$ is equivalent to the ordinary fundamental group $\pi_1(X)$, so that the bicategory on the right-hand side simplifies considerably. The authors are therefore able to use techniques that do not generalize at all to the setting of a genuinely categorical fundamental 2-group. The two-dimensional sphere is the simplest manifold with non-trivial $\pi_2$, making it the canonical toy model for studying higher holonomy.
	
	We note that, the result of \cite{BCM} covers in particular all orientable closed surfaces of strictly positive genus, so Theorem \ref{thm: main} completes the classification \eqref{eq: 2-group holonomy equivalence} of principal bundles via holonomy for orientable closed surfaces.
	
	Analogously to the classical equivalence \eqref{eq: classical holonomy equivalence}, the geometric motivation underlying the expected equivalence \eqref{eq: 2-group holonomy equivalence} comes from \emph{higher parallel transport}. 
	Various authors provide definitions of connections and parallel transport for bundles for strict Lie 2-groups (see for example \cite{Baez-Huerta, Waldorf-parallel-transport}). In the context of smooth 2-groups given as categorical central extensions, Waldorf also proposes a definition of connection, in terms of trivializations of the so-called Chern--Simons 2-gerbe \cite{Waldorf-string-cs}. It remains to develop the theory of parallel transport in this setting. One expects that then the equivalence \eqref{eq: 2-group holonomy equivalence} is an enhancement of the resulting \emph{higher holonomy} construction to a bifunctor, in the case of flat connections. We anticipate that the explicit descriptions of the bicategories of both principal bundles and holonomy data provided here for the case of the sphere will be useful for exploring these constructions in a concrete, hands-on example. 
	
	We comment now on the structure of the proof, which proceeds in several steps. Since we are only interested in flat principal bundles, we may assume that our 2-group is discrete, and is equivalent to one of the form $\cG(G,A,\alpha)$ for $G$ an ordinary group, $A$ an abelian group equipped with an action of $A$, and $\alpha \colon G^3 \to A$. In Section \ref{sec: 2-groups}, we introduce this 2-group, and use the classification of  \cite{BCMNP} to define principal 2-group bundles in terms of \v{C}ech data or transition functors (see also \cite{Bartels}). In Section \ref{sec: Sandrine}, we focus on the case of $S^2$. We work with a specific good cover $\mathcal{U}$ given by four open subsets, and we are able to explicitly identify a 2-skeleton of the bicategory $\Bun_\cG(S^2)$. Indeed, we define a skeletal 2-category $\mathcal{S}$ whose objects are in bijection with the quotient $A/G$, whose 1-morphisms are given by stabilizer subgroups for the action of $G$ on $A$, and whose 2-morphisms are given by copies of $A$. Then we prove:
	\begin{prop}\label{prop: skeleton of Bun}
		There is an evident fully faithful inclusion of $\mathcal{S}$ into $\Bun_\cG(S^2)$ which is an equivalence of bicategories. Thus, $\mathcal{S}$ is a 2-skeleton of $\Bun_\cG(S^2)$. 
	\end{prop}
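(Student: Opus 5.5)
The plan is to compute $\Bun_\cG(S^2)$ directly from the \v{C}ech description with respect to the four-set good cover $\mathcal{U}$. I will first reduce the cocycle data to a normal form, then show that every object is equivalent to one in the image of $\mathcal{S}$, and finally check that the inclusion is an equivalence on hom-categories.

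Since $\cG = \cG(G,A,\alpha)$ is discrete, a cocycle on $\mathcal{U}$ has three parts. It assigns $g_{ij}\in G$ to each double intersection $U_{ij}$, which is connected (I will choose $\mathcal{U}$ so that this holds). It assigns $a_{ijk}\in A$ to each triple intersection. These are subject to the cocycle identity $g_{ij}g_{jk}=g_{ik}$ and a twisted $3$-cocycle condition on quadruple intersections involving $\alpha$. For $S^2$ covered by four sets modelled on the faces of a tetrahedron, the nerve is the boundary of a $3$-simplex. So there are six edges, four nonempty triple intersections, and the quadruple intersection is empty.

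\emph{Step 1 (essential surjectivity).} Using a $1$-morphism, i.e.\ a gauge transformation $h_i\in G$ with fillers $b_{ij}\in A$, I will make all $g_{ij}=1$; this is possible because the $1$-skeleton cocycle is a flat $G$-bundle on a simply connected complex. Once $g_{ij}=1$, the condition on triple data is vacuous except through $\alpha(1,1,1)$, which I normalize to be trivial. Further gauge moves by $b_{ij}$ with $h_i=1$ change $(a_{ijk})$ by a simplicial coboundary. Hence only the class $\sum \pm a_{ijk}\in A = H^2(\partial\Delta^3;A)$ survives. It remains to absorb the action of constant gauge transformations $h_i=h$, which act on this class by $a\mapsto h\cdot a$. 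This shows that isomorphism classes of objects are $A/G$, and that each object is equivalent to the normal-form bundle with $g_{ij}=1$ and a single nonzero $a_{123}=a$ chosen from an orbit representative. These normal forms are the objects of $\mathcal{S}$.

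\emph{Step 2 (hom-categories).} Between normal forms $a$ and $a'$, a $1$-morphism consists of $h_i$, $b_{ij}$ satisfying $h_ig_{ij}=g'_{ij}h_j$, so $h_i=h$ is constant, together with a compatibility on triple intersections. That compatibility says $a'-h\cdot a=\delta b$ in the simplicial cochains. Taking the class, it requires $h\cdot a=a'$. Thus the hom-category is empty unless $a$ and $a'$ are in the same orbit, i.e.\ equal as chosen representatives, in which case $h\in\Stab(a)$.

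A $2$-morphism $(h,b)\Rightarrow(h',b')$ requires $h=h'$ and consists of $c_i\in A$ with $b'_{ij}-b_{ij}=c_i-c_j$, possibly twisted by the action. So $\pi_0$ of the hom-category for fixed $h$ is the set of $b$ with $\delta b=0$ modulo coboundaries, i.e.\ $H^1(\partial\Delta^3;A)=0$, which is a single class. The automorphisms of a $1$-morphism are the $c$ with $\delta c=0$, i.e.\ constants, giving $H^0=A$. Hence each hom-category is equivalent to the discrete groupoid $\Stab(a)$ with automorphism group $A$ on each object, matching $\mathcal{S}$. The inclusion picks the representatives $b=0$ and $c$ constant.

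\emph{Step 3.} I will check that the inclusion respects composition up to the coherence $2$-cells built from $\alpha$, so that it is a genuine pseudofunctor. Then Steps 1--2 (essentially surjective on objects, and an equivalence on hom-categories) give an equivalence of bicategories.

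The main obstacle is Step 3 and the twisting bookkeeping in Step 2. Composition of gauge transformations in $\Bun_\cG$ involves $\alpha$-terms, so I must verify that restricting to $g_{ij}=1$, $b=0$ makes all these coherence terms depend only on $h$ and $\alpha$ in a way compatible with the composition law chosen in $\mathcal{S}$. If they do not vanish identically, I would first try normalizing $\alpha$ (assuming $\alpha(1,-,-)=\alpha(-,1,-)=\alpha(-,-,1)=0$) and absorbing the remaining $\alpha(h,h',-)$ terms into the structure $2$-cells of the inclusion.
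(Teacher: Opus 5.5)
Your plan is correct, and its skeleton is the paper's. The paper's $\Psi$ is exactly your inclusion: the normal form $\rho\equiv 1_G$ with the generators of $\gamma$ trivial except $\gamma_{123}=a$; constant $t_i=g$ with $\eta\equiv 1_A$; and constant $\omega$. Essential surjectivity is proved by gauge-fixing to an orbit representative, and the hom-categories are analysed as you do. What differs is how the computations are carried out. The paper fixes explicit generators: $\rho_{12},\rho_{23},\rho_{34}$ and ten values of $\gamma$; $t_1$ and $\eta_{12},\eta_{23},\eta_{34}$; and $\omega_1$. It checks by hand that the remaining components are consistently determined, and reduces all constraints on 1-morphisms to the single concordance condition \eqref{eq: concordance condition}. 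You instead use the cohomology of the nerve: $H^2\cong A$ gives the orbit invariant and the condition $h\cdot a=a'$, $H^1=0$ gives that 1-morphisms with the same $t$ are 2-isomorphic, and $H^0=A$ gives the 2-morphisms. Your version is shorter and avoids the permutation bookkeeping. It also makes explicit the vanishing of $H^1$, which the paper uses tacitly when it asserts that every $\omega_1\in A$ extends to a 2-morphism. The paper's version yields the closed formula for $a_{(\rho,\gamma)}$, including the correction $\alpha(\rho_{12},\rho_{23},\rho_{34})$. That formula is reused to define $\Hol$, and your argument only produces it after $\rho$ has been trivialized.

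Two adjustments are needed.

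First, the paper's \v{C}ech data are indexed by all ordered tuples in $\nonempty{n}$. So there are extra free values $\gamma_{iji}$, and \eqref{eq: condition for gamma} is imposed, non-vacuously, on 4-tuples with repeated indices even though $U_{1234}=\varnothing$. Your description of the data as living on the six edges and four faces of $\partial\Delta^3$, with a vacuous quadruple condition, should be replaced by the normalized cochains on the ordered \v{C}ech nerve. That complex still has cohomology $A$, $0$, $A$ in degrees $0,1,2$, so nothing else in your argument changes.

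Second, Step 3 is not an obstacle, because $\alpha$ is normalized by hypothesis. For the images of $g,g'$, Definition \ref{def: composition of Cech 1-morphisms} gives $\theta_{ij}=\alpha(1_G,g',g)\alpha(g',g,1_G)/\alpha(g',1_G,g)=1_A$. The associator 2-cell on the image is the constant cochain $\alpha(g'',g',g)$. The horizontal composite of constants $m,m'$ is the constant $m'(g'\cdot m)$. Hence the inclusion is a strict 2-functor, and no correction 2-cells are needed.
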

	
	In Section \ref{sec: Yves}, we turn our attention to the holonomy side of the correspondence. Using that the fundamental 2-group $\pi_{\le 2}(S^2)$ is equivalent to $* \sq \mathbb{Z}$, we are also able to give an explicit description of the holonomy bicategory $\Fun(* \sq\pi_{\le 2}(S^2), *\sq \cG)$, in terms of a bicategory $\mathcal{Y}$. It is easy to compare the bicategories $\mathcal{S}$ and $\mathcal{Y}$, which we do in Section \ref{sec: proof}:
	
	\begin{prop}
		There is a natural functor $F: \mathcal{S} \to \mathcal{Y}$ which is an equivalence of bicategories. 
	\end{prop}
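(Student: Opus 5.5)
We need to show that $F\colon \mathcal{S}\to\mathcal{Y}$ is an equivalence of bicategories. The plan is to apply the standard Whitehead-type criterion: a pseudofunctor is an equivalence if and only if it is essentially surjective on objects, essentially surjective on 1-morphisms within each hom-category, and fully faithful on 2-morphisms. Since $\mathcal{S}$ is a skeletal 2-category (objects $A/G$, 1-morphisms stabilizers, 2-morphisms copies of $A$), we first need an equally concrete description of $\mathcal{Y}$. We obtain it by replacing $\pi_{\le 2}(S^2)$ with the equivalent 2-group $*\sq\mathbb{Z}$, whose only object is the unit and whose automorphisms form $\mathbb{Z}$.

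\textbf{Describing $\mathcal{Y}$.} A pseudofunctor $*\sq(*\sq\mathbb{Z})\to *\sq\cG$ consists of three pieces of data:
\begin{itemize}
\item an object $g\in G$, the image of the single 1-morphism;
\item a group homomorphism $\mathbb{Z}\to \Aut(g)\cong A$, i.e.\ an element $a\in A$;
\item compositor and unitor data.
\end{itemize}
Using the coherence axioms with the associator $\alpha$, we expect to normalize $g=e$ up to equivalence, so that an object of $\mathcal{Y}$ is effectively an element $a\in A$. A pseudonatural transformation is a 1-morphism $h\in G$ together with a 2-cell whose naturality with respect to the generator of $\mathbb{Z}$ forces $h\cdot a=a'$. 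A modification is an element of $A$ with no further constraint; the constraint would only arise from a nontrivial comparison between two 1-morphisms of $*\sq\cG$, and there is none here.

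\textbf{Defining $F$.} We send the orbit representative $[a]\in A/G$ to the pseudofunctor with 2-holonomy $a$. A stabilizer element $h\in\Stab(a)$ goes to the transformation with component $h$ and trivial 2-cell, and $b\in A$ goes to the modification $b$. We must check that $F$ respects composition of 1-morphisms up to coherent 2-cells. Composing transformations in $\Fun$ introduces associator factors $\alpha(h,h',-)$, so $F$ is only a pseudofunctor, with compositor given by these $\alpha$-values. Verifying the pseudofunctor axioms should reduce to the 3-cocycle identity for $\alpha$.

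\textbf{Checking the criterion.} Essential surjectivity on objects holds because every object of $\mathcal{Y}$ is equivalent to one with $g=e$ and label $a$, and objects with labels $a$ and $a'$ are equivalent exactly when $a'\in G\cdot a$; hence objects up to equivalence correspond to $A/G$. On each hom-category, 1-morphisms $F[a]\to F[a]$ up to invertible modification correspond to $\Stab(a)$, and 2-morphisms form $A$, matching $\mathcal{S}$ bijectively. Between distinct orbit representatives both sides have empty hom-categories.

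\textbf{Main obstacle.} The hardest step is the normalization in $\mathcal{Y}$: showing that any pseudofunctor with arbitrary $g$ and compositor data is equivalent to one with $g=e$ and standard compositors. This requires identifying which compositor data are compatible with the unit constraints, and writing down the explicit pseudonatural equivalence with its 2-cell built from $\alpha$. I would first try the transformation with component $g^{-1}$, and correct its 2-cell by the appropriate $\alpha$-values until naturality with respect to the generator of $\mathbb{Z}$ holds.
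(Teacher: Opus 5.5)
Your outline matches the paper's: send a representative $a\in C$ to the holonomy functor with $f(1)=a$ and trivial compositor, act as the identity on $\Stab_G(a)$ and on $A$, and check essential surjectivity plus local equivalences. In the paper, $\cY$ is the explicit bicategory of Definition~\ref{def: bicategory Y}, already identified with $\Fun(*\sq\pitwoS,*\sq\cG)$ in Theorem~\ref{thm: holonomy and Y}, so the comparison there is nearly tautological. By working with $\Fun$ directly you are also redoing Section~\ref{sec: Yves}.

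However, the step you call the main obstacle does not exist, and your proposed fix for it would fail. A pseudofunctor $E$ carries an invertible 2-cell $E_0\colon 1_G\Rightarrow E(\id_*)$, and also a compositor $E(\id_*)E(\id_*)\Rightarrow E(\id_*)$. Since $\cG(G,A,\alpha)$ is skeletal, with no morphisms between distinct objects, this forces $E(\id_*)=1_G$ on the nose. Conversely, a transformation $\sigma\colon E\Rightarrow E'$ needs a 2-cell $\sigma_*E(\id_*)\Rightarrow E'(\id_*)\sigma_*$. By the same skeletality this means $E'(\id_*)=\sigma_*E(\id_*)\sigma_*^{-1}$ in $G$. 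So no component, $g^{-1}$ or otherwise, could move $g\neq 1_G$ to $1_G$.

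What is genuinely missing is the analysis of the compositor data, which you fold into that phantom obstacle.

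\begin{itemize}
\item The compositor $e_{\id_*,\id_*}\in A$ is a free parameter; the unit axiom only forces $e_0=e_{\id_*,\id_*}^{-1}$. So objects are pairs $(e_{\id_*,\id_*},e(1))\in A\times A$, not elements of $A$.
\item The compatibility of a transformation with the compositors (the octagon in Proposition~\ref{prop: holonomy 1-morphisms}) forces its 2-cell to be $s=(h\cdot e_{\id_*,\id_*})/e'_{\id_*,\id_*}$ with $h=\sigma_*$. Hence a transformation is determined by its component.
\end{itemize}

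You need this fact twice.

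\begin{itemize}
\item For essential surjectivity, it gives $(c,a)\cong(1_A,a)$ via the component $1_G$.
\item For the local equivalence, the modification axiom for $[m]_h\colon\sigma\to\sigma'$ reads $s'm=ms$, i.e.\ $s=s'$. It is vacuous only because $s$ is determined by $h$; the reason you give is not the operative one. If $s$ were free, transformations with the same component but different $s$ would not be 2-isomorphic, and $\Stab_G(a)\to\Hom(F(a),F(a))$ would fail to be essentially surjective.
\end{itemize}

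A smaller correction: $F$ is strict, not merely pseudo. Composing transformations with trivial 2-cells only introduces $\alpha(h,h',1_G)$, $\alpha(h,1_G,h')$ and $\alpha(1_G,h,h')$, all equal to $1_A$ by normalization. The value $\alpha(h'',h',h)$ enters as the associator of $\Fun$, matching that of $\cS$. So no compositor and no cocycle verification are needed.
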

	
	The main theorem then follows. Thus, not only do our results demonstrate that flat $\cG$-bundles on $S^2$ are indeed classified by their holonomy, they also provide two simple, explicit, and convenient alternative descriptions of these equivalent bicategories, namely $\mathcal{S}$ and $\mathcal{Y}$. 
	
	\begin{conventions}
		We use the term \emph{bicategory} for the higher categorical structure with objects, 1-morphisms, and 2-morphisms, where composition of 1-morphisms is only weakly associative, up to specified associator. (Some sources use the term \emph{weak 2-category}.) We use the notation $\vee$ and $\wedge$ for vertical and horizontal compositions of 2-morphisms, respectively. We use the term \emph{functor} for the notion of functor between two bicategories which is only required to preserve composition weakly, up to specified 2-isomorphisms. We note that all of our bicategories have only invertible 2-morphisms, so it makes no difference whether we formally require our coherence 2-morphisms to be isomorphisms or not. (Some sources use the terms \emph{lax 2-functor} or \emph{pseudofunctor}.) We also use the term \emph{natural transformation} for the weak notion of morphism between functors. Finally, \emph{modifications} are morphisms between natural transformations. 
	\end{conventions}
	
	\subsection{Acknowledgements}
	
	This article combines some of the results of the MSc theses of the second and third authors \cite{Lolelo2024, St-Cyr2026}. We thank the Département de mathématiques and the Faculté des sciences of the Université de Sherbrooke, as well as the Institut des sciences mathématiques (ISM) for supporting them during their studies. We were supported in part by the Natural Sciences and Engineering Research Council of Canada under Discovery Grant RGPIN-2022-04104 and the Fonds de Recherche du Québec (Nature et Technologie) Research Support for New Academics under Grant 327706.
	
	We thank Dan Berwick-Evans, Juan Carlos Bustamante, Toby Caouette, John Huerta, and Laura Murray for helpful discussions. 
	
	\section{\texorpdfstring{$2$-groups and their principal bundles}{2-groups and their principal bundles}}\label{sec: 2-groups}
	\subsection{Background on 2-groups}
	\begin{defn}\label{defn: 2-group}
		A (discrete) \emph{2-group} is a monoidal groupoid $(\cG,\otimes,\one)$ where every object is (weakly) $\otimes$-invertible, meaning for every object $x$ there exists an object $x^{-1}$ and isomorphisms $x\otimes x^{-1}\simeq \one\simeq x^{-1}\otimes x.$ A \emph{1-homomorphism} between 2-groups is a lax monoidal functor.
		A \emph{2-homomorphism} between 1-homomorphisms is a lax monoidal transformation.
	\end{defn}
	
	\begin{ex}\label{ex: ordinary group}
		Let $G$ be an ordinary group. We view it as a monoidal category with only identity morphisms, monoidal product induced from the group operation, and monoidal unit given by the identity element $1_G$. Since all group elements have inverses, we obtain a 2-group. 
	\end{ex}
	\begin{ex}\label{ex: BA}
		Let $A$ be an abelian group. We form the category $* \sq A$ with a single object $*$, with automorphisms given by $A$. We equip this category with a monoidal structure by setting $* \otimes * = *$ on the level of objects, and $a \otimes b = ab$ on the level of morphisms, using the group structure of $A$. This makes $* \sq A$ into a 2-group. We note that the interchange law (that is, the compatibility between tensor products and compositions of morphisms) holds here because of our assumption that $A$ is abelian. 
	\end{ex}
	
	\begin{ex}\label{ex: G, A, alpha}
		Let $G$ be an ordinary group, and let $A$ be an abelian group, denoted multiplicatively, on which $G$ acts by homomorphisms. Let $\alpha\colon G^3 \to A$ be a normalized 3-cocycle: that is, $\alpha$ is required to satisfy the following two conditions for all $g,h,k,l \in G$:
		\begin{align}
			\frac{(g \cdot \alpha(h, k, l)) \alpha(g, hk, l) \alpha(g,h,k)   }
			{\alpha(gh, k, l)\alpha(g,h,kl)} & = 1_A; \\
			\alpha(1_G,h,k) = \alpha(g,1_G,k) = \alpha(g,h,1_G) & = 1_A.
		\end{align}
		Using this data, we will now define a 2-group $\cG= \cG(G, A, \alpha)$.
		
		The objects of $\cG$ are given by the elements of $G$. The morphisms between two objects $g,h \in G$ are given as follows:
		\begin{align*}
			\Hom_\cG(g,h) = \left\{ \begin{array}{ll}
				\varnothing &  \text{if } g \ne h\\
				\{g\} \times A & \text{if } g = h.
			\end{array}
			\right.
		\end{align*}
		In other words, for fixed $g \in G$, each element $a \in A$ defines an automorphism of $g$ in $\cG$, which we will denote by $[a]_g \colon g \to g$. We will often suppress the subscript $g$ if it is clear from context. We note that in particular, $\cG$ is skeletal. Composition of morphisms is defined by the group operation of $A$: 
		\begin{align*}
			[a]_g \circ [b]_g = [ab]_g.
		\end{align*}
		It follows from the axioms of a group that composition is associative, with identity morphisms $[1_A]_g$, and that all morphisms are invertible, with $[a]_g^{-1}= [a^{-1}]_g$. 
		
		We will now define the monoidal structure on $\cG$. On objects $g,h \in G$, we define $g \otimes h = gh$. Given morphisms $[a]_g \colon g \to g$ and $[b]_h \colon h \to h$, we define $[a]_g \otimes [b]_h =[a(g\cdot b)]_{gh}$. We take the monoidal unit to be $1_G$, along with the identity unitor maps
		\begin{align*}
			l_g = [1_A]_g \colon g \otimes 1_G \to g; \quad r_g = [1_A]_g \colon 1_G \otimes g \to g. 
		\end{align*}
		Finally, we define the associator by
		\begin{align*}
			\alpha_{g,h,k} = [\alpha(g,h,k)]_{ghk} \colon (g \otimes h) \otimes k \to g \otimes(h \otimes k) \text{ for all } g,h,k \in G. 
		\end{align*}
		It is not hard to see that these structure maps satisfy the axioms of a monoidal category. In particular, the pentagon axiom follows from the cocycle condition. We can also see that each object $g$ admits a (weak) inverse $g^{-1}$, so that $\cG$ is indeed a 2-group.
	\end{ex}
	\begin{rmk}
		The 2-group $\cG(G,A,\alpha)$ is known as a \emph{categorical extension} of $G$ by $A$. In the case that the $G$-action on $A$ is trivial, it is a categorical \emph{central} extension. For instance, the ordinary group $G$ of Example \ref{ex: ordinary group} and the quotient $* \sq A$ of Example $\ref{ex: BA}$ are both categorical central extensions with trivial associators, given by $G= \cG(G, \{1_A\}, \alpha_\text{triv})$ and $ * \sq A = \cG(\{1_G\}, A, \alpha_\text{triv})$, respectively. In her thesis \cite{Hoang}, Ho\`ang proves that all 2-groups are isomorphic (i.e. weakly monoidally equivalent) to categorical extensions (see also \cite[Theorem 43]{Baez-Lauda}). Therefore, we work only with such 2-groups in this article. 
	\end{rmk}
	\begin{defn}\label{defn: fundamental 2-group}
		Let $X$ be a path-connected topological space and fix $x \in X$. The \emph{fundamental 2-group} $\pi_{\le 2}(X)$ of $X$ is the category whose objects are loops in $X$ based at $x$ and whose morphisms are homotopy classes of homotopies bewteen loops. This category has a monoidal structure given by concatenation of loops, making it into a 2-group. 
	\end{defn}
	\begin{rmk}
		In terms of Hoàng's classification \cite{Hoang,Baez-Lauda}, $\pi_{\le 2}(X)$ is equivalent to a 2-group $\cG(\pi_1(X,x), \pi_2(X), \alpha)$, where $\pi_1(X,x)$ acts on $\pi_2(X)$ by conjugation, and $\alpha$ is a choice of homotopy between concatenations $(\gamma_1 \star \gamma_2) \star \gamma_3$ and $\gamma_1 \star (\gamma_2 \star \gamma_3)$.
	\end{rmk}
	\begin{ex}
		In the case of $X= S^2$, we have $\pi_1(X) = \{1\}$ and $\pi_2(X) = \mathbb{Z}$ (here we choose to use additive notation). The action of $\pi_1(X)$ on $\pi_2(X)$ and the associator are both automatically trivial. 
	\end{ex}
	
	\subsection{\texorpdfstring{Principal $\cG$-bundles via \v Cech data}{Principal bundles via Cech data}}
	
	Let us fix (for the remainder of the paper) a 2-group of the form $\cG = \cG(G,A, \alpha)$ as in Example \ref{ex: G, A, alpha}. Let $X$ be a smooth manifold. Recall that in the classical setting of a Lie group $G$, a principal $G$-bundle $P \to X$ is classified by its transition functions or \v{C}ech data. Motivated by this, we define the bicategory of principal $\cG$-bundles on $X$ in terms of \v{C}ech data. For interested readers, in Appendix \ref{sec: appendix compare}, we will provide an alternative definition of principal $\cG$-bundles following \cite{Schommer-Pries}, and we will sketch an equivalence between the two approaches. 
	
	\begin{defn}
		An open cover $\bU = \{U_i\}_{i \in I}$ of $X$ is called a \emph{good cover} if each of the opens $U_i$ as well as each non-empty finite intersection $U_{i_1i_2\ldots i_n} = \bigcap_{j=1}^n U_{i_j}$ is contractible.
	\end{defn}
	
	\begin{rmk}
		Every smooth manifold $X$ admits a good cover; see for example the construction on page 120--121 of \cite{Weil}.
	\end{rmk}
	
	\begin{ex}\label{ex: cover}
		The sphere $S^2$ admits a good cover with four open pieces $U_1, U_2, U_3, U_4$ (see Figure \ref{fig:tetraedre}). This is given by inscribing a tetrahedron in the sphere, projecting its faces $F_i$ onto the sphere, and taking an open neighbourhood $U_i$ of each of these faces $F_i$. It is important that the open neighbourhoods be small enough that the quadruple intersection $U_{1234}$ is empty: for example, if one takes $V_i$ to be the open set given by the complement of the vertex opposite the face $F_i$, then the intersection $V_{1234}$ will be the four-times punctured sphere, which is neither empty nor contractible. 
		\begin{figure}[h!]
			\centering
			\includegraphics[width=1\textwidth]{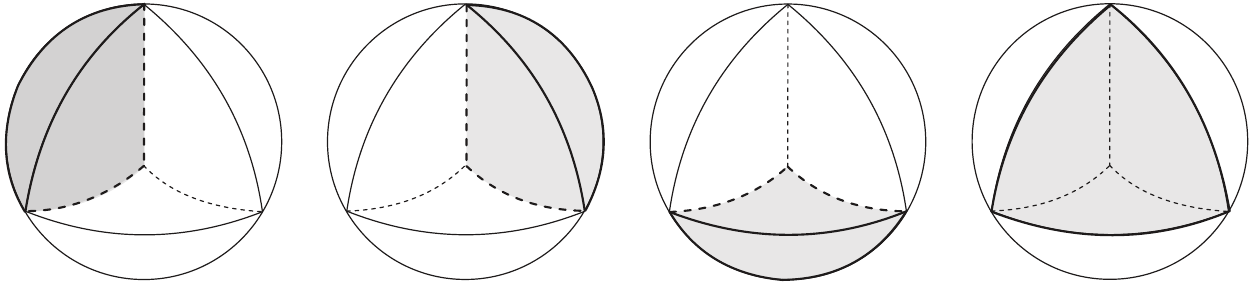}
			\caption{Illustration of a good cover of the sphere.}
			\label{fig:tetraedre}
		\end{figure}
	\end{ex}

	We will now define the bicategory of principal $\cG$-bundles on $X$ in terms of their trivializations relative to a fixed good cover $\bU = \{U_i\}_{i \in I}$. For $n=2,3,...$ we let $ \nonempty{n}\subseteq I^n$ denote the set of tuples $(i_1, \ldots, i_n)$ such that the intersection $U_{i_1\ldots i_n}$ is non-empty. 
	
	\begin{defn}\label{def: Cech data objects}
		A \emph{\v principal $\cG$-bundle on $X$} is a pair $(\rho, \gamma)$, where
		\begin{itemize}
			\item $\rho$ is a collection $\{\rho_{ij} \in  G\}_{ (i,j] \in \nonempty{2}}$ satisfying
			\begin{align}\label{eq: cocycle condition for rho}
				\rho_{ik} = \rho_{ij} \rho_{jk} \text{ for all } (i,j,k) \in \nonempty{3};
			\end{align}
			\item $\gamma$ is a collection $\{\gamma_{ijk} \in A\}_{(i,j,k) \in \nonempty{3}}$ satisfying
			\begin{align}\label{eq: condition for gamma}
				\frac{(\rho_{ij}\cdot \gamma_{jkl})\gamma_{ijl}}{\gamma_{ikl}\gamma_{ijk}} & = \alpha(\rho_{ij}, \rho_{jk}, \rho_{kl}) \text{ for all } (i,j,k,l) \in \nonempty{4};\\
				\gamma_{ijk}& =1_A \text{ whenever } i=j \text{ or } j=k. \label{eq: normalisation condition for gamma}
			\end{align}
		\end{itemize}
	\end{defn}
	\begin{rmk}
		We think of the values $\gamma_{ijk}$ as providing isomorphisms
		\begin{align*}
			[\gamma_{ijk}] \colon \rho_{ik} \to \rho_{ij}\rho_{jk}
		\end{align*}
		in the category $\cG$. Then the condition \eqref{eq: condition for gamma} amounts to the commutativity of the following diagram
		\begin{align*}
			\begin{tikzpicture}[
				xscale=1.8,
				auto,
				every node/.style={font=\small},
				obj/.style={inner sep=3pt} 
				]
				\node[obj] (A1) at (90:2cm)  {$\rho_{ik} \rho_{kl}$};
				\node[obj] (A2) at (18:2cm)  {$(\rho_{ij}\rho_{jk})\rho_{kl}$};
				\node[obj] (A3) at (306:2cm)   {$\rho_{ij}(\rho_{jk}\rho_{kl}).$};
				\node[obj] (A4) at (234:2cm) {$\rho_{ij}\rho_{jl}$};
				\node[obj] (A5) at (162:2cm) {$\rho_{il}$};
				\draw[->] (A1) -- node {$[\gamma_{ijk}] \wedge \id_{\rho_{kl}}$} (A2);
				\draw[->] (A2) -- node {$\alpha(\rho_{ij}, \rho_{jk}, \rho_{kl})$} (A3);
				\draw[->] (A4) -- node {$\id_{\rho_{ij}} \wedge [\gamma_{jkl}]$} (A3);
				\draw[->] (A5) -- node {$[\gamma_{ijl}]$} (A4);
				\draw[->] (A5) -- node {$[\gamma_{ikl}]$} (A1);
			\end{tikzpicture}
		\end{align*}
		In a similar way, the normalization condition arises from requiring $[\gamma_{ijk}]$ to be compatible with the unitors of $\cG$. 
	\end{rmk}
	\begin{rmk}\label{rmk: locally constant}
		Since our cover $\bU$ is good, the collection $\rho$ can be thought of as a locally constant function
		\begin{align*}
			\hat{\rho} \colon \bigsqcup_{i,j \in I} U_{ij} \to G; \qquad &  \hat{\rho}(y) = \rho_{ij} \text{ for } y \in U_{ij}.
		\end{align*}
		Likewise, the data of $\gamma$ is equivalent to a locally constant function $\bigsqcup_{ijk} U_{ijk} \to A$. Analogous remarks apply to the data defining 1-morphisms and 2-mor\-phisms outlined below. 
	\end{rmk}
	
	\begin{defn}\label{def: Cech data one-morphisms}
		Let $(\rho, \gamma), (\rho', \gamma')$ be two principal $\cG$-bundles. A \emph{morphism} $(\rho, \gamma) \to  (\rho', \gamma')$ is a pair $(t,\eta)$, where
		\begin{itemize}
			\item $t$ is a collection $\{t_i \in G \}_{i \in I}$ satisfying
			\begin{align}\label{eq: condition for t}
				t_i \rho_{ij} = \rho'_{ij}t_j \text{ for all } (i,j) \in \nonempty{2};
			\end{align}
			\item $\eta$ is a collection $\{\eta_{ij} \in A \}_{(i,j)\in \nonempty{2}}$ satisfying 
			\begin{align}\label{eq: condition for eta}
				\frac{(\rho'_{ij}\cdot \eta_{jk})\eta_{ij}}{\eta_{ik}}\frac{(t_i \cdot \gamma_{ijk})}{\gamma'_{ijk}} & = \frac{\alpha(t_i, \rho_{ij}, \rho_{jk}) \alpha(\rho'_{ij}, \rho'_{jk}, t_k)}{\alpha(\rho_{ij}, t_j, \rho'_{jk})} 
			\end{align}
		for all $(i,j,k) \in \nonempty{3}$, and
			\begin{align}\label{eq: normalisation condition for eta}
				\eta_{ii}& =1_A \text{ for all } i \in I. 
			\end{align}
		\end{itemize}
	\end{defn}
	
	\begin{defn}\label{def: composition of Cech 1-morphisms}
		Consider three principal $\cG$-bundles with morphisms between them as follows:
		\begin{align*}
			(\rho, \gamma) \xrightarrow{(t, \eta)} (\rho', \gamma') \xrightarrow{(t', \eta')} (\rho'', \gamma''). 
		\end{align*}
		We define the \emph{composition} by $(t', \eta') \circ (t, \eta) = (t't, \theta)$, where $(t't)_i = t'_it_i$, and
		\begin{align}
			\theta_{ij} = (t'_i \cdot \eta_{ij}) \eta'_{ij} \frac{\alpha(\rho''_{ij}, t'_j, t_j) \alpha(t'_i, t_i, \rho_{ij})}{\alpha(t'_i, \rho'_{ij}, t_j)} \text{ for } (i,j) \in \nonempty{2}. 
		\end{align}
	\end{defn}
	
	It is not hard to check that the conditions \eqref{eq: condition for t}, \eqref{eq: condition for eta}, \eqref{eq: normalisation condition for eta} are satisfied, so that $(t't, \theta)$ indeed gives a morphism $(\rho, \gamma) \to (\rho'', \gamma'')$. 
	
	\begin{defn}\label{def: 2-morphisms}
		For two 1-morphisms $(t, \eta), (t, \eta') \colon (\rho, \gamma) \to (\rho', \gamma')$, a \emph{2-morphism} $\omega \colon (t, \eta) \to (t, \eta')$ is given by a collection $\omega =\{\omega_i\in A\}_{i \in I}$ satisfying
		\begin{align}\label{eq: condition for omega}
			\frac{\eta'_{ij}}{\eta_{ij}} = \frac{\rho'_{ij} \cdot \omega_j}{\omega_i} \text{ for all } (i,j) \in \nonempty{2}.
		\end{align}
		Between two 1-morphisms $(t, \eta), (t', \eta')$ with $t \ne t'$, there are no 2-morphisms.   
	\end{defn}
	
	\newcommand{\cv}{\mathbin{\vee}}
	\newcommand{\ch}{\mathbin{\wedge}}
	
	\begin{defn}\label{def: vertical composition}
		Consider three 1-morphisms $(t,\eta), (t,\eta'), (t,\eta''): (\rho,\gamma)\to (\rho',\gamma')$ and two 2-morphisms $\omega:(t,\eta) \to (t,\eta')$ and $\omega':(t,\eta') \to (t,\eta'')$. We define the \emph{vertical composition} by $\omega' \cv \omega = \omega'\omega$, where $(\omega'\omega)_i = \omega'_i\omega_i$.
	\end{defn}
	
	It is easy to check that the condition \eqref{eq: condition for omega} is satisfied, so that $\omega' \cv \omega $ indeed gives a 2-morphism $(t,\eta) \to (t,\eta'')$. 
	
	\begin{defn}\label{def: horizontal composition}
		Consider four 1-morphisms $(t,\eta), (t,\eta'): (\rho,\gamma)\to (\rho',\gamma')$, $(t',\mu),(t',\mu') : (\rho',\gamma') \to (\rho'',\gamma'')$ and two 2-morphisms $\omega:(t,\eta) \to (t,\eta')$ and $\omega':(t',\mu) \to (t',\mu')$. We define the \emph{horizontal composition} by $\omega' \ch \omega =  \omega'(t'\cdot\omega)$, where $\bpr{\omega'(t'\cdot\omega)}_i = \omega'_i( t'_i \cdot \omega_i ) $. 
	\end{defn}
	
	Similary, it is straightforward to check that the condition \eqref{eq: condition for omega} is satisfied, so that $\omega' \ch \omega $ indeed gives a 2-morphism $(t',\mu)\circ (t,\eta) \to (t',\mu')\circ(t,\eta')$. 
	
	\begin{lem}\label{lem: 2-morphisme inverse Bun}
		Let $\omega : (t,\eta) \to (t,\eta')$ a 2-morphism in $\Bun_{\cG}(X)$. There exists a 2-morphism
		\begin{equation*}
			\omega' : (t,\eta') \to (t,\eta)
		\end{equation*}
		which is inverse to $\omega$ with respect to vertical composition, i.e. $ \omega' \cv \omega = \id_{(t,\eta)}$ and $\omega \cv \omega' = \id_{(t,\eta')}$.
	\end{lem}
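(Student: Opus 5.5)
The natural candidate for the inverse is the componentwise inverse: set $\omega'_i = \omega_i^{-1}$ for each $i \in I$. The plan has three steps. First, check that $\omega'$ really is a 2-morphism $(t,\eta') \to (t,\eta)$ in the sense of Definition \ref{def: 2-morphisms}. Second, identify the identity 2-morphisms. Third, verify the two vertical composites using Definition \ref{def: vertical composition}.

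For the first step we must verify condition \eqref{eq: condition for omega} with the roles of $\eta$ and $\eta'$ swapped. That is, we need
\begin{align*}
	\frac{\eta_{ij}}{\eta'_{ij}} = \frac{\rho'_{ij}\cdot \omega_j^{-1}}{\omega_i^{-1}} \quad \text{for all } (i,j) \in \nonempty{2}.
\end{align*}
Since $A$ is abelian, the left side is the inverse of $\eta'_{ij}/\eta_{ij}$. Since $G$ acts on $A$ by group homomorphisms, $\rho'_{ij}\cdot \omega_j^{-1} = (\rho'_{ij}\cdot\omega_j)^{-1}$, so the right side is the inverse of $(\rho'_{ij}\cdot\omega_j)/\omega_i$. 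The required equality is therefore just the inverse of the condition \eqref{eq: condition for omega} already satisfied by $\omega$.

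For the second step, we need to know that the identity 2-morphism $\id_{(t,\eta)}$ is the collection $\{1_A\}_{i\in I}$. This collection satisfies \eqref{eq: condition for omega} trivially as a map $(t,\eta)\to(t,\eta)$. It is also a unit for the componentwise-multiplication composition of Definition \ref{def: vertical composition}, so it is the identity for vertical composition.

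For the third step, the composites are $(\omega'\cv\omega)_i = \omega_i^{-1}\omega_i = 1_A$ and $(\omega\cv\omega')_i = \omega_i\omega_i^{-1} = 1_A$. These are the identities on $(t,\eta)$ and on $(t,\eta')$ respectively.

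None of these steps presents a genuine obstacle. The only point requiring care is the first one, where we use that the $G$-action is by homomorphisms (so it commutes with inversion) together with commutativity of $A$.
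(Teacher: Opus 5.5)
Your proposal is correct and follows the paper's proof essentially verbatim: the paper also sets $\omega'_i := \omega_i^{-1}$, checks condition \eqref{eq: condition for omega} for the swapped pair using that $A$ is abelian (and, implicitly, that $G$ acts by homomorphisms), and then says the inverse property is clear from the componentwise definition of vertical composition. The only addition is that you spell out that the identity 2-morphism is $\{1_A\}_{i\in I}$, which the paper leaves implicit.
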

	
	\begin{proof}
		Let $\omega : (t,\eta) \to (t,\eta')$ be a 2-morphism between $(t,\eta), (t,\eta') : (\rho,\gamma) \to (\rho',\gamma')$. In particular, we have
		\begin{align*}
			\eta'_{ij}\omega_i = (\rho'_{ij}\cdot \omega_j)\eta_{ij}.
		\end{align*}
		Define $\omega'$ by $\omega'_i := \omega_i^{-1}$.
		Then $\omega' : (t,\eta') \to (t,\eta)$ is a 2-morphism if and only if
		\[
		\eta_{ij}\omega'_i = (\rho'_{ij}\cdot \omega'_j)\eta'_{ij}.
		\]
		But we have
		\begin{equation*}
			\frac{\eta_{ij} \omega'_i}{(\rho'_{ij}\cdot \omega'_j) \eta'_{ij}} = \frac{\eta_{ij} \omega_i^{-1}}{(\rho'_{ij}\cdot \omega_j^{-1}) \eta'_{ij}} = \frac{\eta_{ij} (\rho'_{ij}\cdot \omega_j)}{\eta'_{ij} \omega_i}.
		\end{equation*}
		This equals $1_A$, since $\omega$ is a 2-morphism and $A$ is abelian. Hence $\omega'$ is indeed a 2-morphism.
		
		It remains to check that $\omega'$ is the vertical inverse of $\omega$, but this is clear from the definition of vertical composition. This proves that every 2-mor\-phism is in fact a 2-isomorphism.
	\end{proof}
	
	\begin{lem}\label{def: 1-morphism inverse}
		Let $(t,\eta): (\rho,\gamma)\to (\rho',\gamma')$ a 1-morphism. This 1-morphism admits a left inverse $(t^{-1},\tilde{\eta})$ satisfying $(t^{-1},\tilde{\eta})\circ (t,\eta) = \id_{(\rho,\gamma)} = (1_G,1_A)$, where
		\begin{equation}
			\tilde{\eta}_{ij} = \frac{\alpha(t_i^{-1}, \rho'_{ij},t_j)}
			{(t_i^{-1}\cdot\eta_{ij})\alpha(\rho_{ij}, t_j^{-1}, t_j) \alpha(t_i^{-1},t_i, \rho_{ij})}.
		\end{equation}
		Moreover, $(t,\eta)\circ (t^{-1},\tilde{\eta})$ is isomorphic to $\id_{(\rho',\gamma')}$, but is not equal to it.
	\end{lem}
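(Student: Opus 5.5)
The plan is to work entirely with the composition formula of Definition \ref{def: composition of Cech 1-morphisms}. First I will check that $(t^{-1},\tilde\eta)$ is a 1-morphism $(\rho',\gamma')\to(\rho,\gamma)$. Condition \eqref{eq: condition for t} is immediate: multiplying $t_i\rho_{ij}=\rho'_{ij}t_j$ on the left by $t_i^{-1}$ and on the right by $t_j^{-1}$ gives $t_i^{-1}\rho'_{ij}=\rho_{ij}t_j^{-1}$. For the normalisation \eqref{eq: normalisation condition for eta}, note that $\rho_{ii}=\rho'_{ii}=1_G$ by \eqref{eq: cocycle condition for rho}. Then $\eta_{ii}=1_A$, and $\alpha$ is normalised, so $\tilde\eta_{ii}=1_A$.

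Condition \eqref{eq: condition for eta} for $\tilde\eta$ is a computation. Substitute the definition of $\tilde\eta$ and act by $t_i^{-1}$ on condition \eqref{eq: condition for eta} for $\eta$. This eliminates the $\eta$'s and the $\gamma$'s. What remains is an identity purely in $\alpha$, which I will obtain by multiplying together several instances of the 3-cocycle identity, at the arguments $(t_i^{-1},t_i,\rho_{ij},\rho_{jk})$, $(t_i^{-1},\rho'_{ij},\rho'_{jk},t_k)$, $(\rho_{ij},t_j^{-1},t_j,\rho_{jk})$, and so on. Rather than doing this by hand, I would argue more conceptually: the data $(t,\eta)$ is exactly a pseudonatural transformation between the pseudofunctors $\check{C}(\bU)\to *\sq\cG$ determined by $(\rho,\gamma)$ and $(\rho',\gamma')$. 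Inverting the component 1-cells using the chosen weak inverses in $\cG$ yields a transformation whose coherence datum is precisely $\tilde\eta$. This justifies the formula, and the explicit cocycle bookkeeping is routine.

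Next I compute the composite $(t^{-1},\tilde\eta)\circ(t,\eta)=(t^{-1}t,\theta)=(1_G,\theta)$ using Definition \ref{def: composition of Cech 1-morphisms} with $t'=t^{-1}$, $\eta'=\tilde\eta$ and $\rho''=\rho$:
\begin{align*}
\theta_{ij}=(t_i^{-1}\cdot\eta_{ij})\,\tilde\eta_{ij}\,\frac{\alpha(\rho_{ij},t_j^{-1},t_j)\,\alpha(t_i^{-1},t_i,\rho_{ij})}{\alpha(t_i^{-1},\rho'_{ij},t_j)}.
\end{align*}
Substituting $\tilde\eta_{ij}$, every factor cancels and $\theta_{ij}=1_A$. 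The formula for $\tilde\eta$ was designed exactly for this cancellation. Hence the composite is $(1_G,1_A)$, the identity 1-morphism, on the nose.

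For the other composite, $(t,\eta)\circ(t^{-1},\tilde\eta)=(1_G,\theta')$ with
\begin{align*}
\theta'_{ij}=(t_i\cdot\tilde\eta_{ij})\,\eta_{ij}\,\frac{\alpha(\rho'_{ij},t_j,t_j^{-1})\,\alpha(t_i,t_i^{-1},\rho'_{ij})}{\alpha(t_i,\rho_{ij},t_j^{-1})}.
\end{align*}
Here the $\eta$'s cancel (since $t_i\cdot(t_i^{-1}\cdot\eta_{ij})=\eta_{ij}$), leaving a nontrivial expression in $\alpha$. To build a 2-isomorphism to $\id=(1_G,1_A)$, I need $\omega_i\in A$ with $\theta'_{ij}=\omega_i/(\rho'_{ij}\cdot\omega_j)$, by \eqref{eq: condition for omega}. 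The natural candidate is $\omega_i=\alpha(t_i,t_i^{-1},t_i)^{\pm1}$, perhaps twisted by $t_i\cdot$. This is the associator appearing in the triangle/zig-zag identity of the inverse pair $(t_i,t_i^{-1})$. I expect to verify the required identity by combining the cocycle identity at $(t_i,t_i^{-1},t_i,\rho_{ij})$ and $(t_i,t_i^{-1},\rho'_{ij},t_j)$ with $\rho'_{ij}=t_i\rho_{ij}t_j^{-1}$. This verification is the main obstacle: getting the signs and exponents right and finding the correct choice of $\omega$. If the first guess fails, I will solve for $\omega_i$ by specialising to $j=i$ and reading off the required value.

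Finally, non-equality means $\theta'\ne 1_A$ in general, which I will show by a concrete example. Take $\rho=\rho'$ trivial ($\rho_{ij}=1_G$), $\gamma=\gamma'=1$, $t_i=g$ constant, and $\eta=1$; this is a valid 1-morphism if $g$ is central, or simply if $G$ is abelian. Then $\theta'_{ij}$ reduces to $\alpha(g,g^{-1},1)\cdots=1$, which is too trivial. So instead I will take the cover with $t_i$ varying, say $t_i=g$ and $t_j=1$, with $\rho'_{ij}=g$. Then $\theta'_{ij}$ involves $\alpha(g,1,\cdot)/\alpha(g,g^{-1},g)$-type terms. Choosing $\cG$ with $\alpha(g,g^{-1},g)\ne1$, for example $G=\mathbb{Z}/2$, $A=U(1)$ with the nontrivial cocycle, exhibits $\theta'\neq1$.
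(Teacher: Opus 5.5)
Your proposal matches the paper's proof. You verify that $(t^{-1},\tilde\eta)$ is a 1-morphism, note that $\tilde\eta$ is designed so that $(t^{-1},\tilde\eta)\circ(t,\eta)=(1_G,1_A)$ on the nose, and take $\omega_i=\alpha(t_i,t_i^{-1},t_i)$ as the 2-isomorphism; this is your first guess, with no inversion or twist. It works because the cocycle identities at $(t_i,t_i^{-1},t_i,\rho_{ij})$, $(t_i,t_i^{-1},\rho'_{ij},t_j)$, $(t_i,\rho_{ij},t_j^{-1},t_j)$ and $(\rho'_{ij},t_j,t_j^{-1},t_j)$ reduce $\theta'_{ij}$ to $\alpha(t_i,t_i^{-1},t_i)/\bigl(\rho'_{ij}\cdot\alpha(t_j,t_j^{-1},t_j)\bigr)$.

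Your example for non-equality is valid, since there $\theta'_{ij}=\alpha(g,g^{-1},g)$. The paper's proof does not address this part, and it can only mean ``not equal in general'', because for trivial $\alpha$ the composite is the identity. Your fallback of specialising to $j=i$ would not have helped, as that instance of the condition just reads $1_A=\omega_i/\omega_i$.
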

	\begin{proof}
		It is not hard to check that $(t^{-1},\tilde{\eta})$ is a 1-morphism: it suffices to verify equations \eqref{eq: condition for t}, \eqref{eq: condition for eta}, and \eqref{eq: normalisation condition for eta}, using that $\alpha$ is a normalized 3-cocycle, and that $(t, \eta)$ is itself a 1-morphism.
		
		Moreover, by the definition of the composition of 1-morphisms, we have $(t^{-1},\tilde{\eta})\circ (t,\eta) = (1_G,\theta) = \id_{(\rho,\gamma)}.$
		On the other hand, we can define a 2-morphism $\omega \colon (t,\eta)\circ \pr{t^{-1},\tilde{\eta}} \to \id_{(\rho',\gamma')}$ by setting $\omega_i=\alpha(t_i,t_i^{-1},t_i)$. It is straightforward to verify that $\omega$ defines a 2-morphism. Moreover, it follows from Lemma \ref{lem: 2-morphisme inverse Bun} that $\omega$ is an isomorphism.
	\end{proof}
	
	\begin{lem}
		Every 1-morphism of $\Bun_\cG(X)$ is in fact a 1-isomorphism.
	\end{lem}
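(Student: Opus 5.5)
The plan is to deduce this directly from Lemma \ref{def: 1-morphism inverse} and Lemma \ref{lem: 2-morphisme inverse Bun}. In a bicategory, a 1-morphism $f\colon x\to y$ is a 1-isomorphism (an equivalence) if there is a 1-morphism $g\colon y\to x$ together with invertible 2-morphisms $g\circ f\Rightarrow \id_x$ and $f\circ g\Rightarrow \id_y$.

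Let $(t,\eta)\colon(\rho,\gamma)\to(\rho',\gamma')$ be any 1-morphism, and take $g=(t^{-1},\tilde\eta)$ as in Lemma \ref{def: 1-morphism inverse}.

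\begin{enumerate}
\item That lemma gives $g\circ(t,\eta)=\id_{(\rho,\gamma)}$ on the nose. The identity 2-morphism $\omega_i=1_A$ supplies the required invertible 2-cell, since condition \eqref{eq: condition for omega} holds trivially.
\item The same lemma provides a 2-morphism $\omega\colon (t,\eta)\circ g\to\id_{(\rho',\gamma')}$, with $\omega_i=\alpha(t_i,t_i^{-1},t_i)$.
\item By Lemma \ref{lem: 2-morphisme inverse Bun}, every 2-morphism of $\Bun_\cG(X)$ is invertible, so $\omega$ is a 2-isomorphism.
\end{enumerate}

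Together these show that $(t,\eta)$ is an equivalence with weak inverse $(t^{-1},\tilde\eta)$.

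All the real work sits in Lemma \ref{def: 1-morphism inverse}: checking that $(t^{-1},\tilde\eta)$ satisfies \eqref{eq: condition for eta}, and that $\omega_i=\alpha(t_i,t_i^{-1},t_i)$ satisfies \eqref{eq: condition for omega}. Both come down to the normalized cocycle identity for $\alpha$, and I would take them as already established.

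If one also wants the inverse to be an \emph{adjoint} equivalence, the standard bicategorical result that any equivalence can be promoted to an adjoint equivalence handles it, by modifying one of the two 2-cells. Nothing beyond the existence of invertible 2-cells is required, so the main step is just citing the two earlier lemmas correctly.
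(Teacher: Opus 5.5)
Your proposal is correct and matches the paper, which also deduces the statement directly from Lemma \ref{def: 1-morphism inverse}: there, $(t^{-1},\tilde\eta)$ is a strict left inverse, and $\omega_i=\alpha(t_i,t_i^{-1},t_i)$ is a 2-cell $(t,\eta)\circ(t^{-1},\tilde\eta)\Rightarrow\id_{(\rho',\gamma')}$, invertible by Lemma \ref{lem: 2-morphisme inverse Bun}. Reading ``1-isomorphism'' as an equivalence is the intended meaning, since that lemma notes the right composite is only isomorphic to the identity, not equal to it.
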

	\begin{proof}
		This follows directly from Lemma \ref{def: 1-morphism inverse}.
	\end{proof}

	\begin{lem}
		The composition of three 1-morphisms is weakly but not strict\-ly associative. 
	\end{lem}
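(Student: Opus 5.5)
We must show two things: the two bracketings of a triple composite are related by an explicit invertible 2-morphism, and that 2-morphism is in general not the identity, so the two composites need not be equal. Fix three composable 1-morphisms
\[
(\rho,\gamma)\xrightarrow{(t,\eta)}(\rho',\gamma')\xrightarrow{(t',\eta')}(\rho'',\gamma'')\xrightarrow{(t'',\eta'')}(\rho''',\gamma''').
\]

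\textbf{Step 1: compare the two composites.} Both bracketings have first component $t''t't$, computed pointwise in $G$. Here we read $(t''t't)_i$ as the group product $t''_it'_it_i$, which is unambiguous since $G$ is strictly associative. So by Definition \ref{def: 2-morphisms} a 2-morphism between them is a collection $\omega=\{\omega_i\in A\}$ satisfying \eqref{eq: condition for omega}.

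Next, expand the second components $\theta^{L}_{ij}$ of $((t'',\eta'')\circ(t',\eta'))\circ(t,\eta)$ and $\theta^{R}_{ij}$ of $(t'',\eta'')\circ((t',\eta')\circ(t,\eta))$, using Definition \ref{def: composition of Cech 1-morphisms} twice in each case. The factors $\eta''_{ij}$, $t''_i\cdot\eta'_{ij}$ and $t''_it'_i\cdot\eta_{ij}$ occur identically in both. What remains is the quotient $\theta^{L}_{ij}/\theta^{R}_{ij}$, which is a product of eight values of $\alpha$, some acted on by $t''_i$.

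\textbf{Step 2: the main computation.} The natural candidate is the associator of $\cG$ evaluated at the vertex data:
\[
\omega_i=\alpha(t''_i,t'_i,t_i)^{\pm1}.
\]
I will show that \eqref{eq: condition for omega} holds, namely
\[
\frac{\theta^{R}_{ij}}{\theta^{L}_{ij}}=\frac{\rho'''_{ij}\cdot\alpha(t''_j,t'_j,t_j)}{\alpha(t''_i,t'_i,t_i)}
\]
up to the choice of sign. I will do this by applying the 3-cocycle condition of Example \ref{ex: G, A, alpha} several times: once for each way of inserting $\rho$, $\rho'$, $\rho''$ or $\rho'''$ into the string $t''_i,t'_i,t_i$, rewriting throughout with the relations \eqref{eq: condition for t} such as $t_i\rho_{ij}=\rho'_{ij}t_j$.

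This is the main obstacle. Conceptually, it is the pentagon argument: the string $t''\,t'\,t\,\rho$ is moved step by step to $\rho'''\,t''\,t'\,t$, and one tracks the cubes of associators. My plan is to organize the bookkeeping as a cube-shaped diagram in $\cG$ whose faces are pentagons, each commuting by the cocycle identity, instead of doing raw algebra.

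\textbf{Step 3: invertibility and compatibility.} Lemma \ref{lem: 2-morphisme inverse Bun} then makes $\omega$ an invertible 2-morphism. Naturality of $\omega$ in the three 1-morphisms should be routine, because 2-morphisms only multiply the $\eta$'s by factors that commute with all the $\alpha$ terms.

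\textbf{Step 4: non-strictness.} Strict associativity would force $\theta^{L}=\theta^{R}$. To see this fails, I give a concrete example. Take $X$ with a cover having a nonempty intersection $U_{ij}$ with $i\ne j$, all bundles trivial ($\rho=1$, $\gamma=1$, so that \eqref{eq: condition for t} reads $t_i=t_j$ on every nonempty intersection), and constant $t_i=g$, $t'_i=h$, $t''_i=k$ with $\eta=\eta'=\eta''=1$.

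With these choices every $\alpha$ term involving some $\rho=1$ vanishes by normalization, and the surviving terms in $\theta^{L}_{ij}/\theta^{R}_{ij}$ come from conjugating by the associator. This gives
\[
\frac{\theta^{L}_{ij}}{\theta^{R}_{ij}}=\frac{\alpha(k,h,g)}{\alpha(k,h,g)}\cdot(\text{leftover})
\]
and the leftover is where a nontrivial $\alpha$, for instance a nontrivial cocycle on $\mathbb Z/2$ with values in $U(1)$, produces inequality. If in this trivial case the comparison instead collapses to the identity, I will adjust the example by letting $t$ vary over the cover, still compatible with \eqref{eq: condition for t}, so that the off-diagonal $\alpha$ terms do not cancel.
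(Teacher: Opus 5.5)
Your Steps 1--3 follow the paper's own route. The paper expands both bracketings and, using the cocycle identity together with \eqref{eq: condition for t}, arrives at exactly the identity you display, with comparison 2-morphism $\tilde{\alpha}_i=\alpha(t''_i,t'_i,t_i)$. Your plan of one cocycle identity per position of $\rho,\rho',\rho'',\rho'''$ is what makes this work: the identities at $(t''_i,t'_i,t_i,\rho_{ij})$, $(t''_i,t'_i,\rho'_{ij},t_j)$, $(t''_i,\rho''_{ij},t'_j,t_j)$ and $(\rho'''_{ij},t''_j,t'_j,t_j)$ collapse the quotient. That quotient has twelve $\alpha$-factors, not eight. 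The sign is $+1$, with $\omega$ going from the left bracketing to the right one.

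The genuine gap is Step 4: it is meant to prove the ``not strictly'' half, and it does not. Every $\alpha$-factor in Definition \ref{def: composition of Cech 1-morphisms} has one of $\rho_{ij},\rho'_{ij},\rho''_{ij},\rho'''_{ij}$ among its arguments. So when all transition functions are $1_G$, normalization kills every one of them, and both bracketings are literally $(khg,1_A)$. Your own Step 2 identity says the same thing: $\theta^R_{ij}/\theta^L_{ij}=\alpha(k,h,g)/\alpha(k,h,g)=1_A$, so there is no leftover. The fallback cannot work either. If $\rho=\rho'=1_G$, then \eqref{eq: condition for t} forces $t_i=t_j$ whenever $U_{ij}\neq\varnothing$, and the discrepancy only ever compares such overlapping pairs. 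Hence no triple of 1-morphisms between bundles with trivial transition functions can detect non-associativity, whatever $\alpha$ is.

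The missing idea is to use your Step 2 identity as the criterion. The two composites differ exactly when $\alpha(t''_i,t'_i,t_i)\neq\rho'''_{ij}\cdot\alpha(t''_j,t'_j,t_j)$ for some $i,j$ with $U_{ij}\neq\varnothing$. This requires non-constant $t$'s, and hence non-trivial transition functions on the intermediate bundles.

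For instance, let $G=\mathbb{Z}/2=\{1,\sigma\}$ act trivially on $A=\{\pm 1\}$, with $\alpha(\sigma,\sigma,\sigma)=-1$ and $\alpha=1$ otherwise; this is a normalized 3-cocycle. Start from the trivial bundle and take $t_1=t'_1=t''_1=\sigma$, with all other components $1_G$. Define each successive target bundle by $\rho'_{ij}=t_i\rho_{ij}t_j^{-1}$, $\eta=1_A$, and $\gamma'$ solved from \eqref{eq: condition for eta}. You must check that this $\gamma'$ satisfies \eqref{eq: condition for gamma} and \eqref{eq: normalisation condition for gamma}. Then at $(i,j)=(1,2)$ the two bracketings differ by the factor $\alpha(1,1,1)/\alpha(\sigma,\sigma,\sigma)=-1$.
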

	
	\begin{proof}
		Suppose we have three 1-morphisms, together with their associated compositions, as shown in the following diagram:
		\begin{equation*}
			\begin{tikzcd}
				{(\rho,\gamma)} && {(\rho',\gamma')} && {(\rho'',\gamma'')} && {(\rho''',\gamma''')}.
				\arrow["{(t,\eta)}", from=1-1, to=1-3]
				\arrow["{{(t't,\theta)}}"', bend right=25, from=1-1, to=1-5]
				\arrow[""{name=0, anchor=center, inner sep=0}, "{{(t''(t't),\theta')}}"',bend right=40 , from=1-1, to=1-7]
				\arrow[""{name=1, anchor=center, inner sep=0}, "{{((t''t')t,\vartheta')}}",bend left=40, from=1-1, to=1-7]
				\arrow["{{(t',\eta')}}", from=1-3, to=1-5]
				\arrow["{{(t''t',\vartheta)}}", bend left=25, from=1-3, to=1-7]
				\arrow["{{(t'',\eta'')}}", from=1-5, to=1-7]
			\end{tikzcd}
		\end{equation*} 
		If composition were strictly associative, we would have $(t''t't,\theta')=(t''t't,\vartheta')$. However, a direct computation instead yields that 
		\begin{align*}
			\theta'_{ij} = \frac{(t''_it'_i \cdot \eta_{ij})(t''_i\cdot \eta'_{ij})   \eta''_{ij} \Bigl( t''_i\cdot \bigl( \alpha(\rho''_{ij}, t'_j,t_j)   \alpha(t'_i,t_i,\rho_{ij})\bigr) \Bigr) \alpha(\rho'''_{ij}, t''_j,t'_jt_j)   \alpha(t''_i,t'_it_i,\rho_{ij}) }{\bigl(t''_i\cdot \alpha(t'_i,\rho'_{ij}, t_j)\bigr) \alpha(t''_i,\rho''_{ij}, t'_j t_j)}
		\end{align*}
		and 
		\begin{align*}
			\vartheta'_{ij}= \frac{(t''_it'_i \cdot \eta_{ij}) (t''_i \cdot \eta'_{ij})  \eta''_{ij} \alpha(\rho'''_{ij}, t''_j,t'_j) \alpha(t''_i,t'_i,\rho'_{ij})  \alpha(\rho'''_{ij}, t''_jt'_j,t_j)   \alpha(t''_it'_i,t_i,\rho_{ij})}{\alpha(t''_i,\rho''_{ij}, t'_j) \alpha(t''_it'_i,\rho'_{ij}, t_j)}.
		\end{align*}
		Using the properties of $\alpha$ together with equation \eqref{eq: condition for t}, we obtain
		\begin{align*}
			\theta'_{ij} \alpha(t''_i,t'_i,t_i) = \bigl( \rho'''_{ij}\cdot \alpha(t''_j,t'_j,t_j)\bigr)\vartheta'_{ij}.
		\end{align*}
		So, the composition is not strictly associative. Instead, setting $\tilde{\alpha}_i =\alpha(t''_i,t'_i,t_i)$, we obtain that $\tilde{\alpha}$ is a 2-morphism from $(t''t't,\theta')$ to $(t''t't,\vartheta')$. 
	\end{proof}
	
	\begin{prop}
		The above data form a bicategory $\Bun_\cG(X)$ with objects principal $\cG$-bundles. 
	\end{prop}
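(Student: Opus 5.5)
We verify the bicategory axioms directly, organising the data so that most checks collapse to identities in the abelian group $A$. The data are already in place. Objects are the pairs $(\rho,\gamma)$ of Definition \ref{def: Cech data objects} and 1-morphisms are the pairs $(t,\eta)$ of Definition \ref{def: Cech data one-morphisms}. Composition is given by Definition \ref{def: composition of Cech 1-morphisms}. The 2-morphisms are those of Definition \ref{def: 2-morphisms}, with the compositions of Definitions \ref{def: vertical composition} and \ref{def: horizontal composition}. The identity 1-morphism is $(1_G,1_A)$ and the identity 2-morphism is $\omega_i=1_A$. The associator is $\tilde\alpha_i=\alpha(t''_i,t'_i,t_i)$, constructed in the preceding lemma. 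For the unitors, we check from the composition formula and the normalisation of $\alpha$ that $(t,\eta)\circ(1_G,1_A)=(t,\eta)=(1_G,1_A)\circ(t,\eta)$ on the nose. So we may take both unitors to be identities.

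First, the hom-categories. For fixed objects, vertical composition is componentwise multiplication in $A$. It is therefore associative and unital, and Lemma \ref{lem: 2-morphisme inverse Bun} shows that every 2-morphism is invertible, so each hom-category is a groupoid. Next, horizontal composition must be a functor on hom-categories. Identities are preserved since $t'\cdot 1_A=1_A$. The interchange law $(\omega'_2\cv\omega'_1)\ch(\omega_2\cv\omega_1)=(\omega'_2\ch\omega_2)\cv(\omega'_1\ch\omega_1)$ reduces componentwise to
\[\omega'_{2,i}\omega'_{1,i}\,(t'_i\cdot\omega_{2,i}\omega_{1,i})=\omega'_{2,i}(t'_i\cdot\omega_{2,i})\,\omega'_{1,i}(t'_i\cdot\omega_{1,i}).\]
This holds because $A$ is abelian and $G$ acts by homomorphisms.

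Then we check that the associator is natural in each of its three 1-morphism arguments. For 2-morphisms $\omega,\omega',\omega''$ we need
\[\tilde\alpha\cv\bigl(\omega''\ch(\omega'\ch\omega)\bigr)=\bigl((\omega''\ch\omega')\ch\omega\bigr)\cv\tilde\alpha.\]
Both sides equal $\alpha(t''_i,t'_i,t_i)\,\omega''_i\,(t''_i\cdot\omega'_i)\,(t''_it'_i\cdot\omega_i)$, using only that the action is a group action. Here it matters that 2-morphisms only exist between 1-morphisms with the same $t$, so $\tilde\alpha$ does not depend on the $\eta$'s. The triangle identity is immediate: the unitors are identities and $\tilde\alpha$ is trivial whenever one of its arguments has $t=1_G$, by normalisation of $\alpha$.

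The pentagon identity is the main computation. For four composable 1-morphisms with group components $t,t',t'',t'''$, the five 2-morphisms in the pentagon are associators, some whiskered. Whiskering on the right by $(t,\eta)$ leaves the component unchanged, giving $\alpha(t'''_i,t''_i,t'_i)$. Whiskering on the left by $(t''',\eta''')$ applies the action, giving $t'''_i\cdot\alpha(t''_i,t'_i,t_i)$. The identity to prove therefore becomes, at each $i$,
\[(t'''_i\cdot\alpha(t''_i,t'_i,t_i))\,\alpha(t'''_i,t''_it'_i,t_i)\,\alpha(t'''_i,t''_i,t'_i)=\alpha(t'''_it''_i,t'_i,t_i)\,\alpha(t'''_i,t''_i,t'_it_i),\]
which is precisely the 3-cocycle condition on $\alpha$. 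The main obstacle I expect is bookkeeping rather than substance. First, the whiskering conventions coming from Definition \ref{def: horizontal composition} must be applied correctly. Second, the formulas for $\theta'$ and $\vartheta'$ from the previous lemma are needed to confirm that every vertex of the pentagon is a legitimate 1-morphism; this includes checking that the composite $(t't,\theta)$ satisfies \eqref{eq: condition for eta}, which the paper asserted without detail. That last verification is a direct but lengthy manipulation, using \eqref{eq: condition for t}, \eqref{eq: condition for eta} for both factors, and several instances of the cocycle identity for $\alpha$. I would carry it out by comparing both sides as products of $\alpha$-terms and cancelling these in pairs via the cocycle condition.
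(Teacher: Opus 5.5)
Your proposal is correct and follows the same route as the paper: identity 1-morphisms $(1_G,1_A)$, trivial unitors, and direct verification of the axioms. Unlike the paper, which calls those checks straightforward, you actually carry them out, most usefully by reducing the pentagon componentwise to the 3-cocycle condition on $\alpha$.

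One cosmetic point concerns the direction of the associator. The paper's identity $\theta'_{ij}\tilde\alpha_i=(\rho'''_{ij}\cdot\tilde\alpha_j)\vartheta'_{ij}$, read against \eqref{eq: condition for omega}, makes $\tilde\alpha$ a 2-morphism $((t''t')t,\vartheta')\to(t''(t't),\theta')$. That is opposite to the direction stated in the lemma, which is the direction your naturality square uses. You should either reverse the square or use $\tilde\alpha^{-1}$. Since every check is a componentwise product in the abelian group $A$, nothing else in your argument changes.
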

	\begin{proof}
		We define the \emph{identity 1-morphism} $\id_{(\rho,\gamma)}$ to be $(1_G,1_A)$, where $t_i=1_G$ for every $i\in I$ and $\eta_{ij}=1_A$ for every $i,j\in I$. The \emph{left and right unitors} are given by the identity 2-morphism, with $\omega_i=1_A$ for every $i\in I$. It is then straightforward to verify that all the remaining bicategory axioms are satisfied by $\Bun_\cG(X)$.
	\end{proof}
	
	\begin{rmk}\label{rmk: compare BCMNP}
		Let us compare this definition with the bicategory of \cite[Definition 2.28]{BCM} or equivalently \cite[Notation 4.12]{BCMNP}. The first difference is that the authors of \cite{BCM, BCMNP} work with arbitrary coverings, not necessarily good covers, so they need to use the language of locally constant functions as in Remark \ref{rmk: locally constant} rather than collections of elements in $G$ and $A$. 
		
		The authors of \cite{BCM, BCMNP} also allow multiple different coverings, rather than fixing a single good cover as we have done here. This results in slightly more complicated definitions of morphisms, 2-morphisms, and compositions. We will see below that this is not necessary for our purposes, as any principal $\cG$-bundle trivializes over a good cover and hence comes from \v Cech data on such a cover. It is sometimes necessary to work with more general covers for several reasons: in \cite{BCMNP}, the authors wish to work over stacks, rather than manifolds (for example to be able to define equivariant $\cG$-bundles), so that a good cover may not exist. Even when working over a manifold, in \cite{BCM} the authors often wish to use \v Cech data defined relative to the universal cover of the base, which is also not a good cover. These motivations do not apply to our current work, so that we can simplify our bicategory by fixing a good cover. 
		
		A final difference is that we allow non-trivial actions of $G$ on $A$; reducing to the case of a trivial action, we recover the same conditions as \cite{BCM, BCMNP}. For example, our e
		equations \eqref{eq: condition for gamma}, \eqref{eq: condition for eta}, and \eqref{eq: condition for omega} simplify to equations (2.20), (2.22), and (2.23) of \cite{BCM}, respectively. 
	\end{rmk}
	
	\begin{rmk} Let us consider the bicategory $\Bun_{*\sq A}(X)$. We see that objects correspond to \v{C}ech 2-cocyles $\hat{\gamma} \colon  \sqcup_{i,j,k \in I}U_{ijk} \to A$, 1-morphisms are 1-cochains $\hat{\eta} \colon \sqcup_{i,j \in I} U_{ij} \to A$ such that $d\hat{\eta} = \hat{\gamma}/\hat{\gamma}'$, and 2-morphisms are 0-cochains with an analogous property. In other words, we recover the bicategory of $A$-gerbes on $X$, as expected. 
	\end{rmk}
	
	\section{\texorpdfstring{Principal $\cG$-bundles on $S^2$}{Principal G-bundles on S2}}\label{sec: Sandrine}

			In this section, we will analyze $\Bun_\cG(S^2)$ in detail, for $\bU$ the good cover of Example \ref{ex: cover}. The goal is to produce a skeletal presentation of the bicategory, so that we can compare it easily to the bicategory of holonomy data. 
			
			\subsection{\texorpdfstring{Principal $\cG$-bundles on $S^2$: objects}{Principal bundles on S2:objects}}
			
			\begin{prop}\label{prop: obj of Bung}
				There is a bijection 
				\begin{align*}
					Obj\bpr{\Bun_\cG(S^2)} \EquivTo G^3\times A^{10}.
				\end{align*}
			\end{prop}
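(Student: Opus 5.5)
The plan is to make the \v{C}ech data of Definition~\ref{def: Cech data objects} explicit for the good cover $\bU=\{U_1,U_2,U_3,U_4\}$ of Example~\ref{ex: cover}. I then show that a small set of free parameters determines all of it. First I record the combinatorics of the cover: the pairwise and triple intersections are all nonempty, since any two faces of the tetrahedron share an edge and any three share a vertex, while $U_{1234}=\varnothing$. Hence $\nonempty{2}=I^2$ (16 pairs) and $\nonempty{3}=I^3$ (64 triples). The set $\nonempty{4}$ consists of all quadruples in which some index is repeated, that is, all of $I^4$ except the 24 with four distinct entries. The absence of the fully distinct quadruples is exactly what leaves freedom in the data.

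\emph{The $\rho$ part.} Applying \eqref{eq: cocycle condition for rho} to $(i,i,i)$ gives $\rho_{ii}=1_G$. Applying it to $(i,j,i)$ gives $\rho_{ji}=\rho_{ij}^{-1}$, and applying it to $(1,i,j)$ gives $\rho_{ij}=\rho_{1i}^{-1}\rho_{1j}$. So $\rho$ is determined by $(\rho_{12},\rho_{13},\rho_{14})\in G^3$. Conversely, for any triple in $G^3$, the formula $\rho_{ij}=\rho_{1i}^{-1}\rho_{1j}$ (with $\rho_{11}=1_G$) satisfies \eqref{eq: cocycle condition for rho} for every triple, since all triples lie in $\nonempty{3}$. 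This gives the factor $G^3$.

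\emph{The $\gamma$ part.} By \eqref{eq: normalisation condition for gamma}, only the triples with $i\neq j\neq k$ carry information. These are the 12 triples $(i,j,i)$ with $i\ne j$ and the 24 triples of distinct indices. I claim that the 10 values $\gamma_{iji}$ for $i<j$ (6 of them) together with $\gamma_{ijk}$ for $i<j<k$ (4 of them) are free, and that they determine all the others. To see that the others are determined, I use \eqref{eq: condition for gamma} on quadruples with repetitions.
\begin{itemize}
\item The quadruple $(i,j,i,j)$ expresses $\gamma_{jij}$ in terms of $\gamma_{iji}$, $\rho$ and $\alpha$.
\item Quadruples of the form $(i,j,k,j)$, $(i,j,i,k)$ and $(i,k,j,k)$ express $\gamma$ on a transposed triple in terms of $\gamma$ on the original triple and the $\gamma_{aba}$.
\end{itemize}
Since the transpositions generate $S_3$, every $\gamma_{\sigma(i)\sigma(j)\sigma(k)}$ reduces to $\gamma_{ijk}$ with $i<j<k$. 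Quadruples such as $(i,i,j,k)$ give no information by normalization, so I expect them to be automatic; this is a quick check.

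The main obstacle is the converse direction: given arbitrary free values in $A^{10}$ and $\rho\in G^3$, the reconstructed $\gamma$ must satisfy all the conditions \eqref{eq: condition for gamma} over the 232 quadruples in $\nonempty{4}$, and not merely the ones used to define it. First I would simplify by a gauge-type normalization. I would write $\gamma=\beta\cdot c$, where $\beta$ is a fixed particular solution built from $\alpha$ and $\rho$; for instance, pull back $\alpha$ along $\rho_{ij}=\rho_{1i}^{-1}\rho_{1j}$ to obtain a canonical trivialization of $\alpha(\rho_{ij},\rho_{jk},\rho_{kl})$. The remainder $c$ then has to be a twisted normalized 2-cocycle on the ``degenerate'' part of the simplex $\Delta^3$. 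Every relation among the quadruples with a repeated index is a consequence of the 3-cocycle identity for $\alpha$ together with degeneracies. This is the simplicial fact that the normalized cochains see only nondegenerate simplices, and no nondegenerate 3-simplex lies in the nerve of the cover, because $U_{1234}=\varnothing$. Consequently the only honest constraints are the degenerate ones used above, and these are satisfied by construction. If this abstract argument proves slippery, I would fall back on checking the remaining quadruple types directly, up to the $S_4$ symmetry of the cover: there are only a handful of orbit types, namely two equal indices, two pairs, and three equal indices. Combining the two parts gives the bijection with $G^3\times A^{10}$.
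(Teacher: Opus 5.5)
You establish uniqueness correctly, and this part is essentially the paper's argument. You use the same ten generators for $\gamma$, an equivalent choice $(\rho_{12},\rho_{13},\rho_{14})$ of generators for $\rho$, and transpositions in place of the paper's maps $P$ and $T$.

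The gap is in the existence half, and the simplicial fact you invoke there is false. The constraints are indexed by the \v{C}ech nerve $\check N$, whose $n$-simplices are the $(n+1)$-tuples of indices with nonempty intersection. A simplex of $\check N$ is degenerate only when two \emph{consecutive} indices coincide. So $(1,2,1,2)$, $(1,2,1,3)$, $(1,2,3,1)$ and $(1,2,3,2)$ are nondegenerate $3$-simplices lying in $\check N$. In fact $\nonempty{4}$ contains $4\cdot 3^3-24=84$ of them, and each imposes a genuine instance of \eqref{eq: condition for gamma}. Your own determination step uses such equations. If there were none, all $36$ values $\gamma_{ijk}$ with $i\ne j\ne k$ would be free, not $10$.

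What you actually have to show is twofold. First, the value your construction assigns to a permuted triple must not depend on the chain of transpositions used; the paper handles this by checking $T^2=P^3=(PT)^2=\id$ via the cocycle identity. Second, all the equations not used in the construction must then hold. Your fallback does not close this either. The reconstruction (ordered generators, chosen transposition chains) is not $S_4$-equivariant, so checking one quadruple per orbit type is not enough.

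Your gauge idea can be turned into a correct proof.

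\begin{itemize}
\item Since $\rho_{ij}=\rho_{1i}^{-1}\rho_{1j}$, the cocycle identity shows that $\beta_{ijk}=\alpha(\rho_{ij},\rho_{jk},\rho_{k1})^{-1}$ is a normalized solution of \eqref{eq: condition for gamma}.
\item Setting $c_{ijk}=\rho_{1i}\cdot(\gamma_{ijk}/\beta_{ijk})$ gives a coordinatewise bijection from solutions $\gamma$ onto normalized untwisted $2$-cocycles on $\check N$ with values in $A$. This uses $\rho_{1i}\rho_{ij}=\rho_{1j}$ to remove the twist.
\item Let $K\subset\check N$ be the sub-simplicial set of non-decreasing tuples. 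Here your intuition is right: $K$ has no nondegenerate $3$-simplices because $U_{1234}=\varnothing$. Hence $Z^2(K)=A^4$ via $c_{123},c_{124},c_{134},c_{234}$.
\item Restriction $C^\bullet(\check N;A)\to C^\bullet(K;A)$ is surjective and a quasi-isomorphism; this is the classical comparison of ordered and full \v{C}ech complexes. So the relative complex is acyclic.
\item Consequently $Z^2(\check N)\to Z^2(K)$ is onto. Its kernel is $d\,C^1(\check N,K)$, which is isomorphic to $C^1(\check N,K)\cong A^6$ because there are no relative $0$-cochains.
\item If $\eta$ vanishes on the pairs $(i,j)$ with $i<j$, then $(d\eta)_{iji}=\eta_{ji}$. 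So this kernel is detected exactly by the values $c_{iji}$ with $i<j$.
\end{itemize}

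Together these give the bijection with $A^{10}$.
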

			
			\begin{proof}
				Let $(\rho, \gamma)$ be a principal $\cG$-bundle on $S^2$. We know that $\rho_{ij}$ can take up to 16 values of $G$ because $U_{ij}$ are non-empty for all $i, j \in \{1,2,3,4\}$.  However, if we fix $\rho_{12},\rho_{23}$ and $\rho_{34}$, we can determine all other $\rho_{ij}$ using equation \eqref{eq: cocycle condition for rho}. We will therefore call $\rho_{12}, \rho_{23}, \rho_{34}$ ``generators'' of $\rho$. 
				
				Similary, $\gamma_{ijk}$ can take up to 64 values of $A$ (one for each $U_{ijk}$). But if $i=j$ or $j=k$, equation \eqref{eq: normalisation condition for gamma} implies that $\gam{ijk}=1_A$. 
				
				If $i=k$ and $i\neq j$, then equation \eqref{eq: condition for gamma}, evaluated at $(i,j,i,j)$, gives a relation between $\gam{iji}$ and $\gam{jij}$, and this relation is equivalent to equation \eqref{eq: condition for gamma} evaluated at $(j,i,j,i)$. So, if we fix $\gamma_{121},\gamma_{131}, \gamma_{141}, \gamma_{232}, \gamma_{242}, \gamma_{343}$, we can determine all other $\gamma_{iji}$ for $i\neq j$.

				It remains to consider the case where $i, j, k$ are distinct. Suppose we fix $\gamma_{ijk}$ for $1 \le i < j < k \le 4$ (that is, $\gamma_{123}$, $\gamma_{124}$, $\gamma_{134}$, and $\gamma_{234}$). To obtain the other values of $\gamma$ in terms of these $\gamma_{ijk}$, it suffices to study the permutations of $(i,j,k)$. In particular, we know that the set of permutations of three elements $S_3 \cong D_3$ is generated by a rotation (a 3-cycle), denoted by $P$, and a transposition, denoted by $T$, such that $T^2 = \id$, $P^3 = \id$, and $(PT)^2 = \id$.
				
				We would like to define the rotation map (3-cycle) $P$ by
				\begin{equation*}
					P(\gamma_{ijk}) = \rho_{ji} \cdot \left( \frac{\gamma_{ijk}\gamma_{iki}\alpha_{ijki}}{\gamma_{iji}} \right) = \gamma_{jki},
				\end{equation*}
				using equation \eqref{eq: condition for gamma} evaluated at $(i,j,k,i)$.
				From $P$, we can therefore obtain $\gamma_{jki} = P(\gamma_{ijk})$ in terms of $\gamma_{ijk}$, as well as $\gamma_{kij} = P^2(\gamma_{ijk})$, also expressed in terms of $\gamma_{ijk}$.
				
				Moreover, we define the transposition map $T$ by
				\begin{equation*}
					T(\gamma_{ijk}) = \rho_{ji} \cdot \left( \frac{\gamma_{iji}\alpha_{ijik}}{\gamma_{ijk}} \right) = \gamma_{jik},
				\end{equation*}
				using equation \eqref{eq: condition for gamma} evaluated at $(i,j,i,k)$. Thus we can determine $\gamma_{jik} = T(\gamma_{ijk})$, $\gamma_{ikj} = P \circ T(\gamma_{ijk})$, and $\gamma_{kji} = T \circ P(\gamma_{ijk})$.
				
				To ensure that these are reasonable definitions, we need to check the relations $T^2 = \mathrm{id}$, $P^3 = \mathrm{id}$, and $(PT)^2 = \mathrm{id}$ hold. This is a straightforward calculation, using the fact that $\alpha$ is a normalized 3-cocycle. It follows that for a fixed triple $(i<j<k)$, the single value $\gamma_{ijk}$ determines the value of $\gamma$ on all permutations of this triple.
				
				Finally, recall that the quadruple intersection is empty if the opens are distinct. This implies that equation \eqref{eq: condition for gamma} is not required to hold for $(i,j,k,l)$ distinct. In other words, there is no relation between $\gamma_{123}$, $\gamma_{124}$, $\gamma_{134}$, and $\gamma_{234}$, since $\bigcap_{i=1}^4 U_i = \varnothing$.

				In summary, if we fix $\gamma_{121},\gamma_{131}, \gamma_{141}, \gamma_{232}, \gamma_{242}, \gamma_{343}, \gamma_{123}, \gamma_{124}, \gamma_{134}$ and $\gamma_{234}$ (the ``generators'' of $\gamma$), we can determine all other $\gamma_{ijk}$ using equations \eqref{eq: condition for gamma} and \eqref{eq: normalisation condition for gamma}.
				
				A object $(\rho,\gamma)$ of $\Bun_\cG(S^2)$ is therefore determined by three free values of $G$ and ten free values of $A$, as claimed. 
			\end{proof}
			
			\begin{rmk}
				The bijection of Proposition \ref{prop: obj of Bung} is determined by our choice of generators. We could have made other choices (for example $\rho_{43}, \rho_{32}, \rho_{21}$), and would have obtained a different form for the bijection. We will similarly obtain non-canonical bijections in our descriptions of 1-morphisms and 2-morphisms below. However, we fix once and for all our choice of generators in each of these cases, so that our bijections are fixed throughout this paper. 
			\end{rmk}
			\begin{rmk}
				See Tables \ref{tab: rho_ij possible} and \ref{tab: gamma possibles} in Appendix \ref{sec: appendix table} for a complete description of each component of $\rho, \gamma$ in terms of the chosen generators.
			\end{rmk}
			
			\subsection{\texorpdfstring{Principal $\cG$-bundles on $S^2$: 1-morphisms}{Principal bundles on S2: 1-morphisms}}
			
			\begin{prop}
				Let $(\rho,\gamma),(\rho',\gamma') \in Obj\bpr{\Bun_\cG^\bU(S^2)}$ be two objects, determined by their generators under the bijection of Proposition \ref{prop: obj of Bung}. Then there is a bijection   
				\begin{align*}
					\Hom_{\Bun_\cG^\bU(S^2)} \bpr{(\rho,\gamma),(\rho',\gamma')} & \EquivTo G^* \times A^3\\
					(t,\eta) &\mapsto \bpr{t_1, (\eta_{12},\eta_{23},\eta_{34})}
				\end{align*}
				where $G^*$ is the subset of elements $t_1\in G$ satisfying the \emph{concordance condition} 
				\begin{equation}\label{eq: concordance condition}
					\frac{ {\bigl( (t_1 \rho_{12}) \cdot \gamma_{234}\bigl)} \pr{t_1\cdot \gamma_{124}}\gamma'_{123}\gamma'_{134}  \alpha(\rho'_{12},\rho'_{23},\rho'_{34})}{\pr{\rho_{12}'\cdot \gamma'_{234}}\gamma'_{124}  {\bigl(t_1\cdot (\gamma_{123}\gamma_{134})\bigl) } \pr{t_1 \cdot\alpha(\rho_{12},\rho_{23},\rho_{34})}} =1_A. 
				\end{equation}
			\end{prop}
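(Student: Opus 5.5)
The plan is to proceed exactly as in Proposition \ref{prop: obj of Bung}: first show that a 1-morphism $(t,\eta)$ is determined by the listed data, then show that any admissible choice of that data extends to a genuine 1-morphism.

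\textbf{Step 1: the $t_i$.} Condition \eqref{eq: condition for t} gives $t_j = \rho'_{ji} t_i \rho_{ij}$, using $\rho_{ji}=\rho_{ij}^{-1}$, which follows from \eqref{eq: cocycle condition for rho} at $(i,j,i)$ together with $\rho_{ii}=1_G$. Since every $U_{1j}$ is nonempty, all $t_j$ are determined by $t_1$, via $t_j=\rho'_{j1}t_1\rho_{1j}$. These choices are automatically consistent for every pair $(i,j)$, because $\rho,\rho'$ are cocycles. So $t_1\in G$ is free at this stage.

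\textbf{Step 2: the $\eta_{ij}$.} Read \eqref{eq: condition for eta} as an equation solving for $\eta_{ik}$ in terms of $\eta_{ij},\eta_{jk}$ and known data. Normalization \eqref{eq: normalisation condition for eta} gives $\eta_{ii}=1_A$. Evaluating \eqref{eq: condition for eta} at $(i,j,i)$ then expresses $\eta_{ji}$ through $\eta_{ij}$. Next, $(1,2,3)$ gives $\eta_{13}$ from $\eta_{12},\eta_{23}$, and $(1,3,4)$ gives $\eta_{14}$ from $\eta_{13},\eta_{34}$. Triples such as $(2,3,4)$ give $\eta_{24}$. So $\eta_{12},\eta_{23},\eta_{34}$, together with $t_1$, determine every $\eta_{ij}$, and the map in the statement is injective.

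\textbf{Step 3: surjectivity onto $G^*\times A^3$.} Define $t$ and $\eta$ from arbitrary $(t_1,\eta_{12},\eta_{23},\eta_{34})$ using the formulas of Steps 1 and 2. We must check that \eqref{eq: condition for eta} holds for all 64 triples $(i,j,k)$, not just those used to define $\eta$. Writing $\Phi(i,j,k)\in A$ for the ratio of the two sides of \eqref{eq: condition for eta}, a direct computation shows that $\Phi$ satisfies a cocycle-type identity
\[
(\rho'_{ij}\cdot\Phi(j,k,l))\,\Phi(i,j,l)=\Phi(i,k,l)\,\Phi(i,j,k)
\]
on every quadruple for which \eqref{eq: condition for gamma} holds for both $\gamma$ and $\gamma'$. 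This computation uses \eqref{eq: condition for gamma} for $\gamma$ and $\gamma'$, the cocycle identity for $\alpha$, and $t_i\rho_{ij}=\rho'_{ij}t_j$. By construction $\Phi$ vanishes on the defining triples and on degenerate ones. The identity then propagates this vanishing along the permutation maps $P,T$ of Proposition \ref{prop: obj of Bung}, which only use quadruples with a repeated index, where the identity is available.

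\textbf{Main obstacle.} The propagation cannot reach every triple from the defining ones. As with the $\gamma$'s, the distinct triples $\{1,2,3\},\{1,2,4\},\{1,3,4\},\{2,3,4\}$ are linked only through the quadruple $(1,2,3,4)$, where no relation is imposed. Since $\eta_{24}$ is already forced by the triples $(1,2,4)$ and via $(2,3,4)$, these must agree. Equivalently, in cohomological terms, the product over the four faces of the tetrahedron,
\[
(\rho'_{12}\cdot\Phi(2,3,4))\,\Phi(1,2,4)\,\Phi(1,3,4)^{-1}\Phi(1,2,3)^{-1},
\]
must be trivial. This product is independent of $\eta$ because the $\eta$-terms telescope, so it is a condition on $t_1$ alone. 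Expanding it with the $\alpha$-terms of \eqref{eq: condition for eta} and simplifying with the 3-cocycle identity should yield exactly \eqref{eq: concordance condition}. I expect this final simplification of the $\alpha$'s to be the hardest step.

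Once it is done, the argument splits cleanly. When $t_1$ satisfies \eqref{eq: concordance condition}, all conditions hold, so the map is a bijection onto $G^*\times A^3$. When it does not, no 1-morphism with that $t_1$ exists.
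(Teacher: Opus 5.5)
Your proposal is correct, and its skeleton matches the paper's. In both, $t_1,\eta_{12},\eta_{23},\eta_{34}$ are the free generators, everything else is forced, and the only obstruction sits on the quadruple $(1,2,3,4)$, where \eqref{eq: condition for gamma} is not imposed because $U_{1234}=\varnothing$.

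The organization differs. The paper introduces the rewriting rules $S$ (from $(i,j,i)$) and $C$ (from distinct $(i,j,k)$) and checks their mutual consistency case by case:
\begin{itemize}
\item $S\circ S=\id$;
\item $S\bigl(C(\eta_{ij},\eta_{jk})\bigr)=C\bigl(S(\eta_{jk}),S(\eta_{ij})\bigr)$;
\item associativity of $C$, which is equivalent to the concordance condition, together with the assertion that its 24 permuted versions agree.
\end{itemize}
You instead package everything into the defect $\Phi$ and a single twisted-coboundary identity, and the paper's checks become instances of it:
\begin{itemize}
\item $S\circ S=\id$ is your identity on $(i,j,i,j)$;
\item the $S$--$C$ compatibility is your propagation along $P,T$ through quadruples with a repeated index;
\item $C$-associativity is your identity on a quadruple of distinct indices.
\end{itemize}
Your route gives a uniform argument that all 64 instances of \eqref{eq: condition for eta} hold for the extended data. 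It also gives necessity at once, since the tetrahedron product does not see $\eta$. And it avoids comparing 24 permutations, because your choice of defining triples leaves only $\{1,2,4\}$ unforced. The paper's route, in exchange, produces the explicit formulas of Table \ref{tab: eta_ij possible}.

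Your ``main obstacle'' is not really a separate step. The same computation you need in Step 3 gives, on every quadruple and with no hypothesis,
\[
\frac{(\rho'_{ij}\cdot\Phi(j,k,l))\,\Phi(i,j,l)}{\Phi(i,k,l)\,\Phi(i,j,k)}
=\frac{t_i\cdot\bigl(D_{ijkl}(\gamma)\,\alpha(\rho_{ij},\rho_{jk},\rho_{kl})^{-1}\bigr)}{D_{ijkl}(\gamma')\,\alpha(\rho'_{ij},\rho'_{jk},\rho'_{kl})^{-1}},
\]
where $D_{ijkl}(\gamma)$ is the left-hand side of \eqref{eq: condition for gamma}, computed with $\rho$ (and with $\rho'$ for $\gamma'$). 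The three kinds of terms behave as follows:
\begin{itemize}
\item the $\eta$-terms cancel;
\item the $\gamma$-terms use $\rho'_{ij}t_j=t_i\rho_{ij}$;
\item the $\alpha$-terms contribute $\alpha(\rho'_{ij},\rho'_{jk},\rho'_{kl})/\bigl(t_i\cdot\alpha(\rho_{ij},\rho_{jk},\rho_{kl})\bigr)$ after four applications of the 3-cocycle identity.
\end{itemize}
At $(1,2,3,4)$ the right-hand side is exactly the left-hand side of \eqref{eq: concordance condition}.

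One caution: this bookkeeping only closes if you read the middle $\alpha$-term of \eqref{eq: condition for eta} as $\alpha(\rho'_{ij},t_j,\rho_{jk})$. That is the form used in the paper's rule $C$ and in Table \ref{tab: eta_ij possible}; the printed $\alpha(\rho_{ij},t_j,\rho'_{jk})$ is a typo.
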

			
			\begin{rmk}
				We emphasize that the definition of the set $G^*$ depends on the pair of objects under consideration. 
			\end{rmk}
			
			\begin{proof}
				Let $(t,\eta): (\rho,\gamma)\to (\rho',\gamma')$. If we fix $t_1$, we can determine all other $t_i$ using equation \eqref{eq: condition for t}. We will thus call $t_1$ the ``generator'' of $t$. 
				
				Also, if we fix $\eta_{12},\eta_{23}$ and $\eta_{34}$ (the ``generators'' of $\eta$), we can determine all other $\eta_{ij}$ using equation \eqref{eq: condition for eta}. Indeed, equation \eqref{eq: condition for eta} evaluated at $(i,j,i)$ for $i \neq j$, gives a relation between $\eta_{ij}$ and $\eta_{ji}$, which can be expressed using the rule
				\begin{equation*}
					S(\eta_{ij})=\rho'_{ji}\cdot \left(\frac{\gamma'_{iji} \alpha(t_i,\rho_{ij},\rho_{ji}) \alpha(\rho'_{ij},\rho'_{ji},t_i)} {\eta_{ij} \bigl(t_i \cdot \gamma_{iji}\bigr) \alpha(\rho'_{ij},t_j,\rho_{ji})} \right) =\eta_{ji}.
				\end{equation*}
				Naturally, one would therefore want $S\bigl(S(\eta_{ij})\bigr)=S(\eta_{ji})=\eta_{ij}$. This follows easily by applying the fact that $\alpha$ is a normalized 3-cocycle four times.
				
				Moreover, if $i,j,k$ are distinct, then equation \eqref{eq: condition for eta} gives a relation between $\eta_{ij}$, $\eta_{jk}$ and $\eta_{ik}$, which can be expressed using the rule
				\begin{align*}
					C(\eta_{ij},\eta_{jk}) = \frac{ \eta_{ij} \bigl(\rho'_{ij}\cdot \eta_{jk}\bigr) \bigl(t_i\cdot \gamma_{ijk}\bigr) \alpha(\rho'_{ij},t_j,\rho_{jk}) }{ \gamma'_{ijk} \alpha(t_i,\rho_{ij},\rho_{jk}) \alpha(\rho'_{ij},\rho'_{jk},t_k)} = \eta_{ik}. 
				\end{align*}
				
				We note that an element $\eta_{ki}$ can be expressed in two different ways in terms of $\eta_{ij}$ and $\eta_{jk}$:
				\begin{equation*}
					\eta_{ki}=S(\eta_{ik})=S\bigl(C(\eta_{ij},\eta_{jk})\bigr) \quad \text{or} \quad
					\eta_{ki}=C(\eta_{kj},\eta_{ji})=C\bigl(S(\eta_{jk}),S(\eta_{ij})\bigr).
				\end{equation*}
				Thus, we must show that these two approaches coincide for all distinct $i,j,k \in I$, which amounts to checking that
				\begin{equation*}
					S\bigl(C(\eta_{ij},\eta_{jk})\bigr) =C\bigl(S(\eta_{jk}),S(\eta_{ij})\bigr).
				\end{equation*}
				This is tedious but not hard: we use the fact that $\alpha$ is a normalized 3-cocycle nine times.
				
				Finally, for the quadruple $(i,j,k,l)$, the rule $C$ gives two different formulas for $\eta_{il}$:
				\begin{equation*}
					C\bpr{\eta_{ij},C(\eta_{jk}, \eta_{kl})}=  C\bpr{C(\eta_{ij},\eta_{jk}), \eta_{kl}}
				\end{equation*}
				Using the definition of $C$ and the fact that $\alpha$ is a 3-cocycle, we see that the previous equation is  equivalent to 
				\begin{equation}\label{eq : condition de concordance ijkl}
					\frac{{\bigl( (t_i \rho_{ij}) \cdot \gam{jkl}\bigl)} \pr{t_i\cdot \gamma_{ijl}}\gamma'_{ijk}\gamma'_{ikl}  \alpha(\rho'_{ij},\rho'_{jk},\rho'_{kl})}{\pr{\rho_{ij}'\cdot \gamma'_{jkl}}\gamma'_{ijl}  {\bigl(t_i\cdot (\gamma_{ijk}\gamma_{ikl})\bigl) } {\bigl(t_i \cdot\alpha(\rho_{ij},\rho_{jk},\rho_{kl})\bigl)}} = 1_A.
				\end{equation}
				At first glance, this gives $24$ equations of the form \eqref{eq : condition de concordance ijkl}, one for each permutation of $(i,j,k,l)$. However, one can show that all these equations are actually equivalent to one another by using the fact that $\alpha$ is a normalized 3-cocycle and equation \eqref{eq: condition for gamma}. Choosing $(i,j,k,l)=(1,2,3,4)$ yields the concordance condition \eqref{eq: concordance condition} in terms of our generators.
			\end{proof}
			
			\begin{rmk}
				See Tables \ref{tab: t_i} and \ref{tab: eta_ij possible} in Appendix \ref{sec: appendix table} for the values of all components of $t$ and $\eta$ in terms of the fixed generators. 
			\end{rmk}
			
			\begin{cor}\label{cor: 1-morphism of BunG}
				Let $g\in G$ and $(\rho,\gamma)$ be an object. Then $(\rho,\gamma)$ is 1-isomorphic  to an object $(\rho',\gamma')$ with : 
				\begin{itemize}
					\item all generators of $\rho'$ equal to $1_G$ ;
					\item all generators of $\gamma'$ equal to $1_A$, except for
					\begin{equation*}
						\gamma'_{123} = g \cdot \pr{\frac{\gamma_{123}\gamma_{134}   \alpha(\rho_{12},\rho_{23},\rho_{34})} {\pr{\rho_{12} \cdot \gamma_{234}} \gamma_{124}}}.
					\end{equation*}
				\end{itemize}
				We denote objects of this form by $(1_G,\gamma_{123}')$.
			\end{cor}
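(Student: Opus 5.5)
The plan is to construct the 1-isomorphism in two stages, using the parametrization of 1-morphisms from the preceding proposition: a morphism is given by a generator $t_1$ satisfying the concordance condition \eqref{eq: concordance condition}, together with free generators $\eta_{12},\eta_{23},\eta_{34}\in A$. Since every 1-morphism is a 1-isomorphism (Lemma \ref{def: 1-morphism inverse}), it suffices to build a single morphism $(t,\eta)\colon(\rho,\gamma)\to(\rho',\gamma')$ for a suitable target $(\rho',\gamma')$.

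\textbf{Choosing $t$ and $\rho'$.} Set $t_1=g$. Condition \eqref{eq: condition for t} with $\rho'_{ij}=1_G$ forces $t_j=\rho'_{1j}{}^{-1}t_1\rho_{1j}$, so we take $t_j = g\rho_{1j}$ (with $\rho_{11}=1_G$). Then $t_i\rho_{ij}=g\rho_{1i}\rho_{ij}=g\rho_{1j}=t_j$, so $\rho'$ is trivial and \eqref{eq: condition for t} holds. Since $\rho'$ is trivial, every $\alpha(\rho'_{ij},\rho'_{jk},\cdot)$ term vanishes by normalization. Thus the concordance condition \eqref{eq: concordance condition}, read as an equation for the unknown $\gamma'$, reduces to
\[
\gamma'_{123}\gamma'_{134}\bigl(\gamma'_{234}\gamma'_{124}\bigr)^{-1} = g\cdot\Bigl(\frac{\gamma_{123}\gamma_{134}\,\alpha(\rho_{12},\rho_{23},\rho_{34})}{(\rho_{12}\cdot\gamma_{234})\gamma_{124}}\Bigr).
\]
Setting the generators $\gamma'_{124}=\gamma'_{134}=\gamma'_{234}=1_A$ and the $\gamma'_{iji}=1_A$, we are forced to the stated value of $\gamma'_{123}$, with $g$ appearing exactly as in the formula.

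\textbf{Checking $\gamma'$ is a valid object.} By Proposition \ref{prop: obj of Bung}, any choice of the ten generators determines an object, so $(\rho',\gamma')=(1_G,\gamma'_{123})$ exists. By the preceding proposition, $t_1=g$ lies in $G^*$ for this pair, so a morphism exists. We pick $\eta_{12}=\eta_{23}=\eta_{34}=1_A$, or any values, since they are unconstrained.

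\textbf{Main obstacle.} The delicate point is the bookkeeping: $\gamma'$ must be exactly what the bijection of Proposition \ref{prop: obj of Bung} produces from these generators. For instance, the $\gamma'_{iji}$ with trivial $\rho'$ may be forced to satisfy relations, but setting them to $1_A$ is consistent because the relations reduce to $\gamma'_{iji}=\gamma'_{jij}$. We must also confirm that concordance is the \emph{only} obstruction, which the preceding proposition guarantees. If a sign or inverse convention in \eqref{eq: concordance condition} yields $\gamma'_{123}$ inverted relative to the stated formula, the fix is to recheck which side $\gamma'$ appears on; as written, $\gamma'_{123}$ sits in the numerator while the $\gamma$-terms appear acted on by $t_1=g$, giving exactly the claimed expression.
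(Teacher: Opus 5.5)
Your proposal is correct and is the paper's argument: the paper's one-line proof takes the 1-morphism generated by $t_1=g$ with arbitrary $\eta_{12},\eta_{23},\eta_{34}$. You additionally write out the step the paper leaves implicit, namely that with $\rho'$ trivial the concordance condition \eqref{eq: concordance condition} forces exactly the stated value of $\gamma'_{123}$, so $g\in G^*$ for this pair of objects.
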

			
			\begin{proof}
				Take the 1-morphism $(t,\eta)$ generated by $t_1=g$ and arbitrary $\eta_{12}, \eta_{23},$ $\eta_{34}$ in $A$. 
			\end{proof}
			
			\begin{cor} \label{rmk : objets Bun}
				Two objects of the form $(1_G,\gamma_{123})$ and $(1_G,\gamma_{123}')$ as in Corollary \ref{cor: 1-morphism of BunG} are isomorphic if and only if there exists $g\in G$ such that $\gamma'_{123} = g\cdot \gamma_{123}$. 
			\end{cor}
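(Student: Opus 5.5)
The plan is to read off both directions of the corollary from the concordance condition \eqref{eq: concordance condition}, specialized to objects of the normal form of Corollary \ref{cor: 1-morphism of BunG}. Let $(\rho,\gamma) = (1_G,\gamma_{123})$ and $(\rho',\gamma') = (1_G,\gamma'_{123})$. For these objects every generator of $\rho,\rho'$ is $1_G$, and every generator of $\gamma,\gamma'$ other than $\gamma_{123},\gamma'_{123}$ is $1_A$. By the preceding proposition, 1-morphisms between them correspond bijectively to elements of $G^*\times A^3$. So the two objects are isomorphic exactly when $G^*$ is non-empty; here we use that every 1-morphism is a 1-isomorphism, by the lemma following Lemma \ref{def: 1-morphism inverse}.

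First I will substitute into \eqref{eq: concordance condition}. Since $\alpha$ is normalized, $\alpha(1_G,1_G,1_G)=1_A$. The terms $\gamma_{234},\gamma_{124},\gamma_{134}$ and their primed versions are all $1_A$, and $t_1\rho_{12}=t_1$. The condition therefore collapses to
\begin{align*}
\frac{\gamma'_{123}}{t_1\cdot \gamma_{123}} = 1_A,
\end{align*}
that is, $t_1\in G^*$ if and only if $\gamma'_{123} = t_1\cdot\gamma_{123}$. For the direction ``isomorphic $\Rightarrow$'', any 1-morphism $(t,\eta)$ has generator $t_1\in G^*$, so $g=t_1$ works. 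For the converse, given $g$ with $\gamma'_{123}=g\cdot\gamma_{123}$, we have $g\in G^*$. Choosing, say, $\eta_{12}=\eta_{23}=\eta_{34}=1_A$ then produces a 1-morphism via the bijection, and hence an isomorphism.

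The only point needing care is that the bijection of the preceding proposition really does produce a valid 1-morphism from any element of $G^*\times A^3$. That proposition already asserts this: the consistency checks for $S$ and $C$ were done there, and the only remaining constraint was the concordance condition. So the main obstacle is just making sure that the normal-form objects of Corollary \ref{cor: 1-morphism of BunG} are genuine objects. This holds because they are images of 1-morphisms from arbitrary objects, and I will simply invoke that.

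As an alternative sanity check, I could apply the formula of Corollary \ref{cor: 1-morphism of BunG} directly to the object $(1_G,\gamma_{123})$ with parameter $g$. The formula gives target $\gamma''_{123} = g\cdot\gamma_{123}$, which confirms the ``if'' direction independently.
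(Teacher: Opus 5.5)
Your proof is correct and takes essentially the paper's route. The paper simply substitutes $(1_G,\gamma_{123})$ into Corollary \ref{cor: 1-morphism of BunG}, which is your ``sanity check''. That corollary rests on the same specialization of the concordance condition \eqref{eq: concordance condition} to $\gamma'_{123} = t_1\cdot\gamma_{123}$ that you carry out. Your version has the small advantage of spelling out the ``only if'' direction, which the paper's one-line proof leaves implicit.
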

			\begin{proof}
				It suffices to replace $(\rho,\gamma)$ with $(1_G,\gamma_{123})$ in Corollary \ref{cor: 1-morphism of BunG}.
			\end{proof}
			
			\subsection{\texorpdfstring{Principal $\cG$-bundles on $S^2$: 2-morphisms}{Principal G-bundles on S2: 2-morphisms}}
			\begin{prop}
				A 2-morphism $\omega$ is determined by one free value of $A$.
			\end{prop}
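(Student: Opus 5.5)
Fix two 1-morphisms $(t,\eta),(t,\eta')\colon(\rho,\gamma)\to(\rho',\gamma')$ with the same $t$; otherwise there are no 2-morphisms. By Definition \ref{def: 2-morphisms}, a 2-morphism is a collection $\{\omega_i\}_{i=1}^4$ in $A$ satisfying \eqref{eq: condition for omega} for all $(i,j)\in\nonempty{2}$, which here means all pairs, since every $U_{ij}$ is non-empty. The plan is to show that $\omega_1$ determines all the other $\omega_i$, and that the remaining equations either hold automatically or impose conditions on $(\eta,\eta')$ alone, never on $\omega_1$. The statement then means: the set of 2-morphisms $(t,\eta)\to(t,\eta')$ is in bijection with $A$ via $\omega\mapsto\omega_1$, whenever it is non-empty.

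First I will rewrite \eqref{eq: condition for omega} as
\[
\omega_j=\rho'_{ij}{}^{-1}\cdot\Bigl(\omega_i\,\tfrac{\eta'_{ij}}{\eta_{ij}}\Bigr).
\]
Using this with $i=1$ and $j=2,3,4$ expresses $\omega_2,\omega_3,\omega_4$ in terms of $\omega_1$. So $\omega_1$ is the \emph{generator}, and the map $\omega\mapsto\omega_1$ is injective.

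Next I must check that, once $\omega_1$ is chosen freely, all the other equations \eqref{eq: condition for omega} hold. Set $\delta_{ij}=\eta'_{ij}/\eta_{ij}$. Dividing \eqref{eq: condition for eta} for $\eta'$ by the same equation for $\eta$ cancels every $\gamma$ and $\alpha$ term, since both are 1-morphisms with the same $t$. This leaves the twisted cocycle identity
\[
\delta_{ik}=\delta_{ij}\,(\rho'_{ij}\cdot\delta_{jk}),
\]
and \eqref{eq: normalisation condition for eta} gives $\delta_{ii}=1_A$. With this identity, the case $i=j$ of \eqref{eq: condition for omega} is trivial. For general $(i,j)$, I substitute the expressions for $\omega_i$ and $\omega_j$ in terms of $\omega_1$. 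The required equation then reduces to
\[
\delta_{1j}=\delta_{1i}\,(\rho'_{1i}\cdot\delta_{ij}),
\]
using $\rho'_{1j}=\rho'_{1i}\rho'_{ij}$ from \eqref{eq: cocycle condition for rho}. This is exactly the cocycle identity at the triple $(1,i,j)$, and that triple lies in $\nonempty{3}$ because all triple intersections are non-empty. Hence every $\omega_1\in A$ extends uniquely to a 2-morphism.

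The main obstacle is not the $\omega$-equations but existence: for a given pair $\eta,\eta'$ there may be no 2-morphism at all. The map $\omega\mapsto\omega_1$ is a bijection onto $A$ exactly when one exists. By the computation above, a 2-morphism exists precisely when $\delta$ is a coboundary $d\omega$, i.e.\ when \eqref{eq: condition for omega} can be solved. Since all the $\omega_j$ are forced by $\omega_1$, the compatibility conditions are the cocycle identity for $\delta$. That identity is automatic, so solvability always holds in the generator description, provided the $\delta$ cocycle identity follows from the relations among $\eta$ and $\eta'$. I will make this explicit in terms of the generators $\eta_{12},\eta_{23},\eta_{34}$: the $\omega$-equations along the chain $1\to2\to3\to4$ fix $\omega_2,\omega_3,\omega_4$ from $\omega_1$, and all other equations follow from the rules $S$ and $C$. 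Care is needed with the twisted action so that the $\rho'$ factors match the formulas in Table \ref{tab: eta_ij possible}.
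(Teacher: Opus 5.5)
Your argument is correct and follows the paper's route: $\omega_1$ is the generator, and \eqref{eq: condition for omega} with one index equal to $1$ forces $\omega_2,\omega_3,\omega_4$. You go further than the paper's one-line proof. You divide the two instances of \eqref{eq: condition for eta} to get the twisted cocycle identity for $\delta=\eta'/\eta$, and then check it at the triples $(1,i,j)$. This shows that every $\omega_1\in A$ actually extends to a 2-morphism, which is what justifies the word ``free'' and the existence half of Corollary \ref{cor: cech 2-morphisms A}; the paper leaves this implicit.

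That same computation settles existence for any $\eta,\eta'$ with the same $t$. So the claim in your last paragraph that there ``may be no 2-morphism at all'' is false, and the proviso you attach there is already discharged.
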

			
			\begin{proof}
				Let $\omega: (t,\eta)\to (t,\eta')$. If we fix $\omega_1$, we can determine all other $\omega_i$ using equation \eqref{eq: condition for omega}. 
			\end{proof}
			
			\begin{rmk}
				We therefore call $\omega_1$ the generator of $\omega$. See Table \ref{tab: omega possibles} in Appendix \ref{sec: appendix table} for the values of the other components of $\omega$ in terms of the generator $\omega_1$. 
			\end{rmk}
			
			\begin{cor}\label{cor: cech 2-morphisms A}
				A 1-morphism $(t,\eta)$ is 2-isomorphic to $(t',\eta')$ if and only if $t=t'$. There is a bijection between $2\Hom((t,\eta),(t,\eta'))$ and $A$.
			\end{cor}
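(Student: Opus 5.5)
The plan is to combine the definition of 2-morphisms (Definition \ref{def: 2-morphisms}) with the description of 1-morphisms via generators established in the previous proposition. By definition, there are no 2-morphisms between $(t,\eta)$ and $(t',\eta')$ when $t\neq t'$. So for the first statement it suffices to show the converse: if $t=t'$, then a 2-morphism $(t,\eta)\to(t,\eta')$ exists. Both 1-morphisms go $(\rho,\gamma)\to(\rho',\gamma')$, so $t$ is determined by $t_1$, and $\eta$, $\eta'$ are determined by their generators $(\eta_{12},\eta_{23},\eta_{34})$ and $(\eta'_{12},\eta'_{23},\eta'_{34})$.

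\textbf{Construction.} Given $t=t'$, I fix an arbitrary $\omega_1\in A$. I then solve equation \eqref{eq: condition for omega} successively along the chain $1\to2\to3\to4$:
\begin{align*}
\omega_{j} = \rho'_{ij}{}^{-1}\cdot\Bigl(\omega_i\,\frac{\eta'_{ij}}{\eta_{ij}}\Bigr), \qquad (i,j)\in\{(1,2),(2,3),(3,4)\}.
\end{align*}
This defines $\omega_2,\omega_3,\omega_4$, which are the entries of Table \ref{tab: omega possibles}. The preceding proposition shows that any 2-morphism is determined by $\omega_1$, so this assignment is forced. The only remaining task is to verify \eqref{eq: condition for omega} for all other pairs $(i,j)$.

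\textbf{Main step: verifying the remaining pairs.} This is the key point. The ratio $\delta_{ij}:=\eta'_{ij}/\eta_{ij}$ satisfies the difference of the two instances of \eqref{eq: condition for eta} for $(t,\eta)$ and $(t,\eta')$. Since $t$, $\rho$, $\rho'$, $\gamma$, $\gamma'$ are shared, all the $\alpha$, $\gamma$ and $\gamma'$ factors cancel, leaving
\begin{align*}
\delta_{ik} = (\rho'_{ij}\cdot\delta_{jk})\,\delta_{ij}, \qquad \delta_{ii}=1_A.
\end{align*}
Thus $\delta$ is a 1-cocycle twisted by $\rho'$. In particular $\delta_{ji}=\rho'_{ji}\cdot\delta_{ij}^{-1}$, and every $\delta_{ij}$ is determined by $\delta_{12},\delta_{23},\delta_{34}$. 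The coboundary $\delta^\omega_{ij}:=(\rho'_{ij}\cdot\omega_j)/\omega_i$ satisfies the same twisted cocycle identity, using $\rho'_{ik}=\rho'_{ij}\rho'_{jk}$. By construction, $\delta$ and $\delta^\omega$ agree on the generators. Two twisted cocycles that agree on $(1,2),(2,3),(3,4)$ agree on every pair, since every pair is reached from these by the cocycle and inversion relations. Hence $\omega$ is a 2-morphism, which proves existence, and Lemma \ref{lem: 2-morphisme inverse Bun} shows it is a 2-isomorphism.

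\textbf{Bijection with $A$.} The map $\omega\mapsto\omega_1$ is injective by the preceding proposition. It is surjective because, by the construction above, every $\omega_1\in A$ extends to a 2-morphism. Hence $2\Hom((t,\eta),(t,\eta'))\cong A$.
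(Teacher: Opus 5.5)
Your proof is correct and follows the paper's route: by Definition~\ref{def: 2-morphisms} there are no 2-morphisms unless $t=t'$, and when $t=t'$ the map $\omega\mapsto\omega_1$ is a bijection onto $A$. Your observation that $\delta_{ij}=\eta'_{ij}/\eta_{ij}$ and $(\rho'_{ij}\cdot\omega_j)/\omega_i$ are both $\rho'$-twisted 1-cocycles agreeing on $(1,2),(2,3),(3,4)$ supplies a consistency check the paper's one-line proof and preceding proposition leave implicit, namely that every $\omega_1$ extends to a genuine 2-morphism; note only that your chain formulas coincide with the $(i,1)$-based entries of Table~\ref{tab: omega possibles} only after this check.
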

			
			\begin{proof}
				By Definition \ref{def: 2-morphisms}, there are no 2-morphisms unless $t=t'$. On the other hand, if $t=t'$, any $m \in A$ gives rise to a 2-morphism $\omega$  between $(t,\eta)$ and $(t,\eta')$ generated by $\omega_1=m$. 
			\end{proof}

			\subsection{\texorpdfstring{Skeleton of $\Bun_\cG(S^2)$}{Skeleton of BunG}}
			We are now in a position to identify a skeletal subcategory of $\Bun_\cG(S^2)$. 
			
			\begin{defn}\label{def: S}
				Choose a set $C\subset A$ of representatives for the orbits of the action of $G$ on $A$. Define $\cS$ to be the bicategory with
				\begin{itemize}
					\item objects $a \in C$;
					\item 1-morphisms $\Hom_\cS(a,a') = \left\{ \begin{array}{cc}
						\Stab_G(a) & \text{ if } a = a'  \\
						\varnothing & \text{ else;}
					\end{array} \right.$
					\item 2-morphisms $2\Hom_\cS(g,g') = \left\{ \begin{array}{cc}
						A &  \text {if }g = g'\\
						\varnothing & \text{else;}
					\end{array}
					\right.$
					\item composition of 1-morphisms given by multiplication in $G$;
					\item associator given by $\alpha$;
					\item identity 1-morphisms given by $\id_{a} = 1_G$;
					\item left and right unitors given by $1_A$;
					\item vertical composition of 2-morphisms given by multiplication in $A$;
					\item identity 2-morphism given by $\id_{g} = 1_A$;
					\item horizontal composition of 2-morphisms given by the diagram 
					\begin{align*}
						\begin{tikzpicture}[scale=.8, every node/.style={font=\scriptsize}]
							\node (A) at (0,0) {$(a,a')$};
							\node (B) at (3,0) {$(b,g \cdot a')$};
							\node (C) at (6.3,0) {$(c,g'g \cdot a')$};
							\node (D) at (8.2,0) {$(a,a')$};
							\node (E) at (13, 0) {$(c,g'g \cdot a')$};
							\draw[->] (A) to[bend left=40] node[name=f, above] {$g$} (B);
							\draw[->] (A) to[bend right=40] node[name=fprime, below] {$g$} (B);
							\draw[->] (B) to[bend left=40] node[name=g, above] {$g'$} (C);
							\draw[->] (B) to[bend right=40] node[name=gprime, below] {$g'$} (C);
							\draw[->] (D) to[bend left=35] node[name=h, above] {$g'g$} (E);
							\draw[->] (D) to [bend right=35] node[name=hprime, below] {$g'g$} (E);
							\draw[->, double, double distance=1.2pt, shorten <=5pt, shorten >=5pt] (f) -- node[right] {$m$} (fprime);
							\draw[->, double, double distance=1.2pt, shorten <=5pt, shorten >=5pt] (g) -- node[right] {$m'$} (gprime);
							\draw[->, double, double distance=1.2pt, shorten <=5pt, shorten >=5pt] (h) -- node[right] {$m' \wedge m$} (hprime);
						\end{tikzpicture}
					\end{align*}
					with $m' \wedge m= m' (g' \cdot m) \in A=2\Hom_\cS(g'g, g'g).$
				\end{itemize}
			\end{defn}
			
			\begin{prop}
				This data defines a skeletal bicategory $\cS$. 
			\end{prop}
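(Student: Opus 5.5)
We verify the bicategory axioms directly from the data in Definition \ref{def: S}, checking first that all structure maps are well defined and then that the coherence axioms hold. Skeletality will be immediate at the end.

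\emph{Well-definedness.} Composition of 1-morphisms must land in the right hom-set. If $g,g' \in \Stab_G(a)$, then $g'g\cdot a = g'\cdot a = a$, so $g'g\in\Stab_G(a)$. Since $\Stab_G(a)$ is a subgroup, it contains $1_G$, so $\id_a$ makes sense. The associator $\alpha(g'',g',g)$ is an element of $A = 2\Hom_\cS(g''g'g,g''g'g)$. Here we use that composition in $G$ is strictly associative on 1-morphisms, so source and target agree. The same reasoning applies to the unitors $1_A$.

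\emph{Hom-categories and horizontal composition.} Each $\Hom_\cS(a,a)$ is a groupoid: it has objects $\Stab_G(a)$, automorphism group $A$ at each object, and no other morphisms. Associativity and units of vertical composition follow from the group axioms of $A$. Horizontal composition is $m'\wedge m = m'(g'\cdot m)$. Functoriality of this composition, namely the interchange law
\begin{align*}
(n'\vee m')\wedge(n\vee m) = (n'\wedge n)\vee(m'\wedge m),
\end{align*}
reduces to $n'm'(g'\cdot(nm)) = n'(g'\cdot n)\,m'(g'\cdot m)$. This holds because $A$ is abelian and $G$ acts by homomorphisms. Horizontal composition also preserves identities, since $g'\cdot 1_A = 1_A$.

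\emph{Naturality and coherence.} Naturality of the associator in its three 2-morphism arguments $m'',m',m$ asks that
\begin{align*}
\bigl(m''\wedge(m'\wedge m)\bigr)\,\alpha = \alpha\,\bigl((m''\wedge m')\wedge m\bigr).
\end{align*}
Both bracketings equal $m''(g''\cdot m')(g''g'\cdot m)$, using $g''\cdot(g'\cdot m) = (g''g')\cdot m$. Since $A$ is abelian, the two sides agree. Naturality of the unitors is trivial. The pentagon axiom for four composable morphisms $g_4,g_3,g_2,g_1$ includes whiskerings such as $\id_{g_4}\wedge\alpha(g_3,g_2,g_1)$, which equals $g_4\cdot\alpha(g_3,g_2,g_1)$. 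Whiskering on the other side is $\alpha\wedge\id = \alpha$. The pentagon therefore becomes exactly the 3-cocycle identity of Example \ref{ex: G, A, alpha}. The triangle axiom reduces to $\alpha(g,1_G,g') = 1_A$, which is the normalization of $\alpha$.

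The main obstacle is bookkeeping. We must get the order of arguments of $\alpha$, and the side on which the $G$-action appears in whiskerings, consistent with the cocycle condition as stated. I would match conventions against the monoidal structure of $\cG$ in Example \ref{ex: G, A, alpha}, where $[a]_g\otimes[b]_h = [a(g\cdot b)]_{gh}$. In fact each $\Hom_\cS(a,a)$-with-composition is just the full monoidal subgroupoid of $\cG$ on $\Stab_G(a)$, with morphisms reversed in order. So the pentagon and triangle can simply be inherited from the monoidal category axioms of $\cG$, reading composition $g'\circ g$ as $g'\otimes g$.

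Finally, $\cS$ is skeletal. Distinct objects of $C$ have empty hom-sets, so no two distinct objects are equivalent. Within a hom-category, 2-morphisms exist only between equal 1-morphisms, so no two distinct 1-morphisms are isomorphic.
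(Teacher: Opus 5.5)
Your proposal is correct and follows the same route as the paper. The paper only asserts that the bicategory axioms are straightforward to verify, and it argues skeletality exactly as you do: distinct objects have empty hom-categories, and distinct 1-morphisms admit no 2-morphisms between them. Your explicit checks supply the omitted verification correctly: the interchange law via commutativity of $A$ and the $G$-action by homomorphisms, the pentagon as the 3-cocycle identity, the triangle as normalization, and the observation that each hom-category is $\cG(\Stab_G(a),A,\alpha_\vert)$ as in Lemma~\ref{lemma: S automorphism 2-group}.
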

			
			\begin{proof}
				It is straightforward to check that $\cS$ satisfies each of the conditions in the definition of a bicategory. 
				It is skeletal by construction: two objects of $\cS$ are 1-isomorphic if and only if they are equal, and two 1-morphisms of $\cS$ are 2-isomorphic if and only if they are equal.
			\end{proof}
			
			We also note the following immediate consequence of the definition, which will be useful later on.
			
			\begin{lem}\label{lemma: S automorphism 2-group}
				Let $a \in C$ be an object of $\mathcal{S}$. Let $\alpha_|$ be the restriction of the subgroup of the cocycle $\alpha$ to the subgroup $\Stab_G(a) \subset G$. Then the automorphism 2-group of $C$ is equal to $\cG(\Stab_G(a), A, \alpha_\vert)$.
			\end{lem}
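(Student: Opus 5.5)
The plan is to unwind Definition~\ref{def: S} at the single object $a \in C$ and match it, piece by piece, with the one-object bicategory $* \sq \cG(\Stab_G(a), A, \alpha_|)$. Here ``automorphism 2-group'' means the monoidal groupoid $\mathrm{End}_\cS(a)$: its objects are 1-endomorphisms of $a$, its morphisms are 2-morphisms, its tensor product is composition of 1-morphisms, and its associator and unitors come from those of $\cS$.

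First check that $\cG(\Stab_G(a), A, \alpha_|)$ is a well-defined 2-group in the sense of Example~\ref{ex: G, A, alpha}. The subgroup $\Stab_G(a)$ still acts on $A$ by restriction. The restriction $\alpha_|$ of $\alpha$ to $\Stab_G(a)^3$ is again a normalized 3-cocycle, because the cocycle identity and the normalization conditions only involve products of the arguments, and these products stay in the subgroup. One should also confirm that composition in $\cS$ is well defined: if $g,g' \in \Stab_G(a)$ then $g'g \in \Stab_G(a)$. This is immediate, and it is also why the targets in the horizontal-composition diagram collapse to $a$.

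Next compare the two structures directly.

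\textbf{Objects.} The objects of $\mathrm{End}_\cS(a)$ are $\Hom_\cS(a,a) = \Stab_G(a)$.

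\textbf{Morphisms.} For $g \ne g'$ there are no morphisms $g \to g'$, and the automorphisms of $g$ are $2\Hom_\cS(g,g) = A$, with composition given by multiplication in $A$. This is exactly $\Hom(g,g) = \{g\}\times A$ with $[a]_g\circ[b]_g=[ab]_g$.

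\textbf{Tensor product on objects.} Composition of 1-morphisms is multiplication in $G$, matching $g\otimes h = gh$.

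\textbf{Tensor product on morphisms.} This is horizontal composition, $m'\wedge m = m'(g'\cdot m)$ for $m$ on $g$ and $m'$ on $g'$, with the tensor taken in the order $g' \otimes g = g'g$. This matches $[m']_{g'}\otimes[m]_g=[m'(g'\cdot m)]_{g'g}$.

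\textbf{Unit, unitors and associator.} The unit is $1_G$ and the unitors are $1_A$, matching the identity unitors $[1_A]_g$. The associator of $\cS$ is $\alpha$ evaluated on triples in $\Stab_G(a)$, which is exactly $[\alpha_|(g,h,k)]$.

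Since every piece of data agrees on the nose, the identity map on objects and morphisms is a strict monoidal isomorphism. This gives equality, not just equivalence.

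The only delicate point is the ordering convention in the horizontal composition. Composition $g'\circ g = g'g$ must be read as $g'\otimes g$, so that the formula $m'(g'\cdot m)$ lines up with the twisting $[a]_g\otimes[b]_h=[a(g\cdot b)]_{gh}$ of Example~\ref{ex: G, A, alpha}. The same convention must hold for the associator's argument order: $\alpha(g'',g',g)$ should govern the comparison between $(g''g')g$ and $g''(g'g)$. I expect this bookkeeping to be the main thing requiring care. I will also note that the statement's ``automorphism 2-group of $C$'' should read ``of $a$''.
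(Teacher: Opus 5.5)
Your proposal is correct and follows the paper's route: the paper gives no separate proof and calls the lemma an immediate consequence of the definition of $\cS$, and your piece-by-piece matching (objects $\Stab_G(a)$, 2-morphisms $A$, horizontal composition $m'(g'\cdot m)$ agreeing with the tensor product of Example~\ref{ex: G, A, alpha}, trivial unitors, associator $\alpha_\vert$, and the check that $\alpha_\vert$ is again a normalized 3-cocycle) is exactly the verification the paper leaves implicit. Your correction that ``of $C$'' should read ``of $a$'' is also right.
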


			\begin{thm} \label{thm: S and Bun}
				We have an equivalence of bicategories
				$$\cS \cong \Bun_\cG(S^2).$$
			\end{thm}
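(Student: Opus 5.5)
Our plan is to construct an explicit functor $\iota \colon \cS \to \Bun_\cG(S^2)$ and show it is essentially surjective on objects, essentially surjective on 1-morphisms between fixed objects, and fully faithful on 2-morphisms. This is the standard characterization of an equivalence of bicategories (a local equivalence that is surjective on equivalence classes of objects).

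\emph{Step 1: the functor.} On objects we send $a \in C$ to the bundle $(1_G, a)$ of Corollary \ref{cor: 1-morphism of BunG}: all generators of $\rho$ are $1_G$, all generators of $\gamma$ are $1_A$ except $\gamma_{123} = a$, and the remaining components are determined by the tables in Appendix \ref{sec: appendix table}. On 1-morphisms, $g \in \Stab_G(a)$ goes to the 1-morphism $(t,\eta)$ with generator $t_1 = g$ (hence all $t_i = g$, since $\rho = 1_G$) and $\eta_{12} = \eta_{23} = \eta_{34} = 1_A$. This is legitimate because, with $\rho = \rho' = 1_G$ and $\alpha$ normalized, the concordance condition \eqref{eq: concordance condition} reduces to $g\cdot a = a$, so $G^*$ is exactly $\Stab_G(a)$. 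On 2-morphisms, $m \in A$ goes to the $\omega$ with generator $\omega_1 = m$, using Corollary \ref{cor: cech 2-morphisms A}. We then compare composites: $\iota(g')\circ\iota(g)$ has $t$-component $g'g$, with $\theta_{ij}$ built from $\alpha(1,g',g)$, $\alpha(g',g,1)$ and $\alpha(g',1,g)$, all equal to $1_A$ by normalization. Hence $\theta$ has trivial generators and agrees strictly with $\iota(g'g)$, except that the non-generator components of $\eta$ must also be checked via $C$ and $S$. We take the compositor to be the identity, or, if some non-generator component disagrees, the unique 2-morphism of Corollary \ref{cor: cech 2-morphisms A} with the appropriate $\omega_1$. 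The associator of $\cS$ is $\alpha(g'',g',g)$, which matches the associator $\tilde{\alpha}_i = \alpha(t''_i,t'_i,t_i)$ computed in $\Bun_\cG(X)$. Vertical composition is multiplication in both bicategories. For horizontal composition, $(\omega'\wedge\omega)_1 = \omega'_1(g'\cdot\omega_1)$ matches $m'(g'\cdot m)$. The coherence axioms then hold because every structure 2-cell on both sides is given by the same $\alpha$-values.

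\emph{Step 2: essential surjectivity on objects.} By Corollary \ref{cor: 1-morphism of BunG}, every $(\rho,\gamma)$ is 1-isomorphic to some $(1_G,\gamma'_{123})$. Choosing $g$ appropriately, we may take $\gamma'_{123}$ to be the orbit representative in $C$. By Corollary \ref{rmk : objets Bun}, distinct elements of $C$ give non-isomorphic objects, which is consistent with $\cS$ being skeletal.

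\emph{Step 3: local equivalence.} For fixed $a$, the Hom-category $\Hom((1_G,a),(1_G,a))$ is in bijection on objects with $\Stab_G(a)\times A^3$. By Corollary \ref{cor: cech 2-morphisms A}, two 1-morphisms are 2-isomorphic iff their $t$ agree, so every 1-morphism is 2-isomorphic to some $\iota(g)$; thus $\iota$ is essentially surjective on 1-morphisms. For $a \ne a'$, both Hom-categories are empty. Fullness and faithfulness on 2-morphisms is the bijection $2\Hom(\iota g,\iota g)\cong A$ via $\omega_1$, again from Corollary \ref{cor: cech 2-morphisms A}. For this we must check that $\omega_1$ extends to all $\omega_i$ under \eqref{eq: condition for omega}; when $\eta = \eta'$ this reduces to $\omega_i = \omega_j$, since $\rho' = 1$.

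The main obstacle is Step 1. We need to verify that the full $\eta$ obtained from the trivial generators, with $t_i = g$ and $\gamma = \gamma'$, satisfies $C$ and $S$ compatibly with composition. The concern is that components such as $\eta_{21}$ or $\eta_{13}$ may pick up factors like $(g\cdot\gamma_{121})/\gamma_{121}$ or $(g\cdot a)/a$. These vanish exactly because $g$ stabilizes $a$, together with the table values of the non-generator $\gamma$'s when $\rho = 1$. I would first compute the tables specialized to $\rho = 1$, $\gamma_{123} = a$; I expect every non-generator $\gamma$ to be a power of $a$, and hence $g$-invariant.
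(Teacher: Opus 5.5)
Your proposal is correct and follows essentially the same route as the paper: the same strict functor (the paper's $\Psi$), essential surjectivity via Corollary~\ref{cor: 1-morphism of BunG}, and local equivalence via the concordance condition reducing to $g\cdot a=a$ together with Corollary~\ref{cor: cech 2-morphisms A}. The obstacle you flag in Step 1 goes away immediately (the paper simply asserts strictness): with $\rho=\rho'=1_G$ every $\gamma_{ijk}$ of $(1_G,a)$ is a power of $a$, so all components of $\eta$ for $\iota(g)$ equal $1_A$; and since a 1-morphism between fixed objects is determined by $t_1,\eta_{12},\eta_{23},\eta_{34}$ and the composite $\theta$ has trivial generators, you get $\iota(g')\circ\iota(g)=\iota(g'g)$ on the nose.
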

			
			\begin{proof}
				We define a strict 2-functor $\Psi : \cS \to \Bun_\cG(S^2)$ by: 
				\begin{itemize}
					\item objects : $\Psi(a) = (\rho, \gamma) :=(1_G,a) $ with $\rho_{ij} = 1_G$ for all $i,j\in I$ and $\gamma_{ijk} = 1_A$ for all generators except $\gamma_{123} =a$;
					\item 1-morphisms : $\Psi(g)=(t,\eta)  := (g,1_A)$ with $t_1=g$ and $\eta_{ij} = 1_A$ for all generators;
					\item 2-morphisms : $\Psi(b)= \omega :=b $ with $\omega_1=b$.  
				\end{itemize}
				That $\Psi$ is a strict 2-functor follows naturally from our construction. To show that $\Psi$ is an equivalence of bicategories, we will prove that it is essentially surjective (in Lemma \ref{lem: Psi essentially surjective}) and that it induces equivalences of hom-categories (in Lemma \ref{lem: Psi equivalence functor}). 
			\end{proof}

			\begin{lem}\label{lem: Psi essentially surjective}
				The functor $\Psi$ is essentially surjective. 
			\end{lem}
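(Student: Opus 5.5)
Given an arbitrary object $(\rho,\gamma)$ of $\Bun_\cG(S^2)$, we need some $a \in C$ with $\Psi(a)=(1_G,a)$ 1-isomorphic to $(\rho,\gamma)$. The plan has two steps. First normalize $(\rho,\gamma)$ using Corollary \ref{cor: 1-morphism of BunG}. Then move within the $G$-orbit using Corollary \ref{rmk : objets Bun}.

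For the first step, apply Corollary \ref{cor: 1-morphism of BunG} with $g=1_G$. This gives a 1-isomorphism from $(\rho,\gamma)$ to an object $(1_G,c)$: all generators of $\rho'$ equal $1_G$, all generators of $\gamma'$ equal $1_A$ except $\gamma'_{123}=c$, where
\begin{align*}
	c = \frac{\gamma_{123}\gamma_{134}\alpha(\rho_{12},\rho_{23},\rho_{34})}{(\rho_{12}\cdot\gamma_{234})\gamma_{124}} \in A.
\end{align*}
The substantive check is that the 1-morphism produced there really exists. Concretely, $t_1$ must lie in the set $G^*$ for the pair $(\rho,\gamma)$, $(\rho',\gamma')$, so one verifies the concordance condition \eqref{eq: concordance condition}. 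With $\rho'_{ij}=1_G$, $t_1=g$ and $\gamma'$ trivial except $\gamma'_{123}$, equation \eqref{eq: concordance condition} reduces to
\[
\gamma'_{123}=g\cdot\frac{\gamma_{123}\gamma_{134}\,\alpha(\rho_{12},\rho_{23},\rho_{34})}{(\rho_{12}\cdot\gamma_{234})\gamma_{124}} .
\]
Here one uses $\alpha(1_G,1_G,1_G)=1_A$ by normalization, and $(t_1\rho_{12})\cdot\gamma_{234}=g\cdot(\rho_{12}\cdot\gamma_{234})$ because the action is by a group action. This is exactly how $\gamma'_{123}$ was defined, so the condition holds. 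Also, $t_2,t_3,t_4$ are determined by \eqref{eq: condition for t}, and $\eta$ is determined from free generators by the preceding proposition, so the 1-morphism is well defined. This step is the only one I expect to need care, since it depends on the bijection for 1-morphisms and on the generator normalization being consistent; everything else is formal.

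For the second step, let $a\in C$ be the chosen representative of the $G$-orbit of $c$, say $a=h\cdot c$ with $h \in G$. By Corollary \ref{rmk : objets Bun}, $(1_G,c)$ is 1-isomorphic to $(1_G,a)=\Psi(a)$. The composite of the two 1-isomorphisms is a 1-isomorphism $(\rho,\gamma)\to\Psi(a)$; every 1-morphism is invertible up to 2-isomorphism by Lemma \ref{def: 1-morphism inverse}. Hence $\Psi$ is essentially surjective.

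Alternatively, both steps can be done at once by applying Corollary \ref{cor: 1-morphism of BunG} with $g=h$, which lands directly on $\gamma'_{123}=h\cdot c=a$.
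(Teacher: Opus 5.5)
Your proposal is correct and takes essentially the same approach as the paper. The paper picks the orbit representative $a = g\cdot c$ and applies Corollary \ref{cor: 1-morphism of BunG} directly with that $g$, which is exactly your one-step ``alternative''; your two-step route through $(1_G,c)$ and Corollary \ref{rmk : objets Bun} is an equivalent unpacking, and your explicit check of the concordance condition fills in a verification that the paper leaves implicit in that corollary.
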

			
			\begin{proof}
				Let $(\rho,\gamma)$ be an object of $\Bun_\cG(S^2)$. We need to find an object $a \in \cS$ such that $(\rho,\gamma) \cong \Psi(a)$. We consider the element
				\begin{equation*}
					{\frac{\gamma_{123}\gamma_{134}   \alpha(\rho_{12},\rho_{23},\rho_{34})} {\pr{\rho_{12} \cdot \gamma_{234}} \gamma_{124}}} \in A.
				\end{equation*}
				By definition of $C$, there exists an element $a \in C$ in the same $G$-orbit: that is, there exists some $g \in G$ such that 
				\begin{equation*}
					a =g \cdot \pr{\frac{\gamma_{123}\gamma_{134}   \alpha(\rho_{12},\rho_{23},\rho_{34})} {\pr{\rho_{12} \cdot \gamma_{234}} \gamma_{124}}}.
				\end{equation*}
				By Corollary \ref{cor: 1-morphism of BunG}, $g$ determines a 1-isomorphism between $(\rho,\gamma)$ and $\Psi(a)$.
			\end{proof}
			
			\begin{lem}\label{lem: Psi equivalence functor}
				Let $a,a'\in \cS$. The functor 
				\begin{align*}
					\Psi_{a,a'}: \Hom_{\cS} \pr{a,a'} \to \Hom_{\Bun_\cG^Y (S^2)}(\Psi(a),\Psi(a'))
				\end{align*}
				is an equivalence of categories. 
			\end{lem}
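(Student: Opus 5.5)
We must show that $\Psi_{a,a'}$ is essentially surjective and fully faithful as a functor between hom-groupoids. We split into the cases $a \ne a'$ and $a = a'$.

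\emph{Case $a\neq a'$.} The source $\Hom_\cS(a,a')$ is empty by definition. We show the target is empty too. Both $\Psi(a)$ and $\Psi(a')$ have all generators of $\rho$ equal to $1_G$ and all generators of $\gamma$ trivial except $\gamma_{123}$. A 1-morphism between them is determined by a generator $t_1=g$ satisfying the concordance condition \eqref{eq: concordance condition}. With these data, and $\alpha$ normalized, that condition reduces to $a' = g\cdot a$. This is the computation underlying Corollary \ref{rmk : objets Bun}. Since $a,a'\in C$ are distinct orbit representatives, no such $g$ exists, so the target is empty and $\Psi_{a,a'}$ is trivially an equivalence.

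\emph{Case $a=a'$.} By the same reduction, 1-morphisms $\Psi(a)\to\Psi(a)$ correspond to pairs $(t_1,(\eta_{12},\eta_{23},\eta_{34}))$ with $t_1\in\Stab_G(a)$ and arbitrary $\eta$-generators.

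\emph{Essential surjectivity.} A given $(t,\eta)$ is 2-isomorphic to $\Psi(t_1)=(t_1,1_A)$ by Corollary \ref{cor: cech 2-morphisms A}, because the two share the same $t$.

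\emph{Full faithfulness.} For $g\in\Stab_G(a)$, $2\Hom_\cS(g,g)=A$, and $\Psi$ sends $b\mapsto\omega$ with $\omega_1=b$. Corollary \ref{cor: cech 2-morphisms A} says 2-morphisms $(g,1_A)\to(g,1_A)$ are in bijection with $A$ via the generator $\omega_1$, so $\Psi$ is bijective on these 2-hom sets. For $g\ne g'$, both sides have no 2-morphisms. Compatibility with vertical composition is immediate from Definition \ref{def: vertical composition}, so $\Psi_{a,a}$ is a functor.

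The step I expect to need care is the bijection $\omega_1\leftrightarrow A$ when $\eta=\eta'=1_A$. One must check that every $\omega_1$ extends consistently via \eqref{eq: condition for omega} to all $\omega_i$. With $\rho'=1_G$ and $\eta'_{ij}/\eta_{ij}=1_A$, that equation forces $\omega_i=\omega_j$ for all $i,j$. So $\omega$ is constant, determined by $\omega_1$, and every value of $\omega_1$ is allowed. This confirms the bijection directly.

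The other point to verify is the reduction of the concordance condition to $a'=g\cdot a$. Substituting $\rho=\rho'=1$, $\gamma_{123}=a$, $\gamma'_{123}=a'$ and all other generators trivial, the ratio in \eqref{eq: concordance condition} becomes $a'/(g\cdot a)$. The only subtlety is checking that the non-generator values of $\gamma$ from the tables do not enter this condition; they do not, since it involves only $\gamma_{123},\gamma_{124},\gamma_{134},\gamma_{234}$.
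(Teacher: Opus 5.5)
Your proof is correct and follows essentially the same route as the paper: you rule out $a\neq a'$ because the concordance condition reduces to $a'=t_1\cdot a$ (the paper cites Corollary~\ref{rmk : objets Bun} for this), you get essential surjectivity from $t_1\in\Stab_G(a)$ together with the fact that 1-morphisms with the same $t$ are 2-isomorphic, and you get full faithfulness from the bijection of Corollary~\ref{cor: cech 2-morphisms A}. Your extra checks, that $\omega$ is constant on endomorphisms of $\Psi(g)$ and that only the generators of $\gamma$ enter the concordance condition, are correct and make explicit what the paper leaves implicit.
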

			\begin{proof}
				If $a\neq a'$, $\Hom_{\cS} \pr{a,a'} = \varnothing$. On the other hand, by Corollary \ref{rmk : objets Bun}, $\Hom_{\Bun_\cG^Y (S^2)}(\Psi(a),\Psi(a'))\neq \varnothing$ is non-empty if and only if $a$ and $a'$ are in the same $G$-orbit; but since we chose $C$ to consist of representatives of each orbit, this occurs if and only if $a=a'$. So we assume for the rest of the proof that $a=a'$ and we denote $\Psi_{a,a'} = \Psi_a$. We need to show that $\Psi_a$ is fully faithful and essentially surjective.
				
				To show that $\Psi_a$ is fully faithful, we take $g,h \in \Hom_{\cS} \pr{a,a}$. We need to show that the induced map
				\begin{align*}
					\Psi_{g,h}: 2\Hom_{\cS} \pr{g,h} &\to 2\Hom_{\Bun_\cG^Y (S^2)}\bpr{\Psi(g),\Psi(h)}\\
					b & \mapsto \Psi(b).
				\end{align*} 
				is a bijection. We note that if $g \ne h$ then both 
				$2\Hom_{\Bun_\cG^Y (S^2)}\pr{\Psi(g),\Psi(h)}$ and $2\Hom_{\cS} \pr{g,h}$ are empty. Therefore, we consider the case where $g=h$ and $2\Hom_{\cS} \pr{g,g} =A$. Then $\Psi_{g,g}$ is just the bijection of Corollary \ref{cor: cech 2-morphisms A}. 
				
				To show that $\Psi_a$ is essentially surjective, we take $(t,\eta)\colon \Psi(a) \to \Psi(a)$ in $\Bun_\cG^Y(S^2)$. We want to find some $g \in \Hom_{\cS} \pr{a,a}$ such that $\Psi_a(g) \cong (t,\eta)$.
				Recall that $\Psi(a)=(1_A,a)$ and that $(t,\eta)$ is a 1-morphism. Thus, it satisfies the concordance condition \eqref{eq: concordance condition}, which in this case reduces to the equation
				\[
				\frac{a}{t_1\cdot a}=1_A.
				\]
				This implies that $t_1 \in \Stab_G(a)$, and so we take $g:=t_1$. We observe that $\Psi(g)$ and $(t,\eta)$ have the same $t$-component and thus by Corollary \ref{cor: 1-morphism of BunG} they are 2-isomorphic.
				
				We have thus shown that $\Psi_{a,b}$ is an equivalence.
			\end{proof}
			
			\section{Holonomy data}\label{sec: Yves}
			
			\begin{defn}\label{def:BG for 2-group}
				Let $\cG$ be a 2-group (or in fact a monoidal category, more generally). We denote by $* \sq \cG$ the bicategory with a single object $*$, 1-morphisms given by the objects of $\cG$, and 2-morphisms given by the morphisms of $\cG$. The composition of 1-morphisms and the horizontal composition of 2-morphisms come from the multiplication functor on $\cG$, whereas the vertical composition of 2-morphisms is just the ordinary composition of morphisms in $\cG$. 
			\end{defn}
			
			\begin{defn}\label{defn: holonomy}
				Let $X$ be a smooth connected manifold and $\cG$ a 2-group. The bicategory of \emph{$\cG$-holonomy data} on $X$ is the bicategory of functors
				\begin{align*}
					\Fun(*\sq \pi_{\le 2}(X), * \sq \cG).
				\end{align*}
			\end{defn}
			
			Our goal is therefore to analyze the bicategory $\Fun(*\sq \pitwoS, * \sq \cG)$ for $\cG= \cG(G, A, \alpha)$ as in Example \ref{ex: G, A, alpha}. Combining Definitions \ref{defn: fundamental 2-group} and \ref{def:BG for 2-group}, we see that $*\sq \pitwoS$ is the strict 2-category with one object $*$, one 1-morphism $\id_*$, and 2-morphisms $2\Aut_{* \sq \pitwoS}(\id_*) = \mathbb{Z}$ (with composition given by addition). 
			
			\subsection{\texorpdfstring{Holonomy on $S^2$: objects}{Holonomy on s2 : objets}}
			\begin{prop}\label{prop: holonomy objects}
				There is a bijection 
				\begin{align*}
					\text{Obj}(\Fun(* \sq \pitwoS, * \sq \cG)) & \EquivTo A^2\\
					F & \mapsto (\fcomp, f(1)).
				\end{align*}
			\end{prop}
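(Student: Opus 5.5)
The plan is to unpack the definition of a functor $F \colon *\sq\pitwoS \to *\sq\cG$ between one-object bicategories, and to show that every component of the data is determined by the two elements $\fcomp \in A$ and $f(1) \in A$, with no further constraints on them.

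\textbf{Step 1: the underlying data of $F$.} Such a functor consists of the following:
\begin{itemize}
\item the (forced) assignment $F(*)=*$;
\item an object $F(\id_*) = g \in G$;
\item a map $f \colon \mathbb{Z} = 2\Aut(\id_*) \to \Hom_\cG(g,g)=A$ on 2-morphisms;
\item a compositor $\fcomp \colon F(\id_*)\otimes F(\id_*) \to F(\id_*\circ\id_*) = F(\id_*)$;
\item a unitor $f_* \colon 1_G \to F(\id_*)$.
\end{itemize}
Since $\cG$ is skeletal and has no morphisms between distinct objects, the compositor forces $g\cdot g = g$, so $g = 1_G$. Similarly, the unitor forces $1_G = g$. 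Consequently all 2-morphism data lie in $\Hom_\cG(1_G,1_G) = A$, and $\fcomp$ and $f_*$ are simply elements of $A$.

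\textbf{Step 2: the axioms.} I would go through the axioms one at a time.
\begin{itemize}
\item Functoriality for vertical composition says that $f$ is a group homomorphism $\mathbb{Z}\to A$, so $f$ is determined by $f(1)$, and any value of $f(1)$ is allowed.
\item Naturality of the compositor with respect to 2-morphisms $m,n \in \mathbb{Z}$ reads $\fcomp\,(f(m)\wedge f(n)) = f(m\wedge n)\,\fcomp$. In $*\sq\pitwoS$ the horizontal composite $m \wedge n$ equals $m+n$ (Eckmann--Hilton, and the action and associator of $\pitwoS$ are trivial). In $\cG$ we have $f(m)\wedge f(n) = f(m)\,(1_G\cdot f(n)) = f(m)f(n)$. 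Since $A$ is abelian, this condition reduces to the homomorphism property already obtained, so it imposes nothing new.
\item The associativity (hexagon) axiom involves the associator $\alpha(1_G,1_G,1_G)=1_A$, by normalization, and trivial associators on the source. It reduces to $\fcomp^2 = \fcomp^2$, which holds automatically.
\item The two unit axioms compare $\fcomp \cdot (f_* \wedge \id)$ with the trivial unitors on both sides, and each gives $f_* = \fcomp^{-1}$. So the unitor is not free data; it is determined by the compositor.
\end{itemize}

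\textbf{Step 3: the bijection.} Collecting Steps 1 and 2, the assignment $F \mapsto (\fcomp, f(1))$ is injective, since every component of $F$ is recovered from this pair. It is also surjective: for any $(x,y) \in A^2$, set $g=1_G$, $\fcomp = x$, $f(n) = y^n$, and $f_* = x^{-1}$. The checks in Step 2 show that this defines a functor.

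The main obstacle is bookkeeping with conventions rather than any real difficulty. One must be careful about the direction of the compositor and unitor, and about whether unitors are part of the data or required to be identities. With either convention the unitor is pinned down by $\fcomp$, so the count $A^2$ is unaffected. I would also verify carefully that horizontal composition in $*\sq\pitwoS$ really is addition, so that the naturality condition gives no constraint beyond the homomorphism property.
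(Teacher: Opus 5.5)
Your proof is correct and follows the paper's argument essentially step by step: you force $F(\id_*)=1_G$, reduce the 2-morphism data to $f(n)=f(1)^n$, note that associativity is automatic, and get $f_0=\fcomp^{-1}$ from the unit axioms. Your explicit check of compositor naturality fills in what the paper simply calls automatic.

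One correction to your closing remark. If the unitor were required to be an identity, the unit axiom $f_0\fcomp=1_A$ would force $\fcomp=1_A$, and objects would correspond to $A$ rather than $A^2$. So the count $A^2$ depends on treating $F_0$ as free data, which is the paper's convention.
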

			
			\begin{proof}
				Let us consider an object $F \in \Fun(*\sq \pitwoS, * \sq \cG)$, and analyze how it acts as a functor.
				
				On the level of objects, we have no choice but to set $F(*)$ to be the single object of $* \sq \cG$, which we will denote by  $\ptG $.
				
				On the level of 1-morphisms, we know that $F(\id_*)$ must be an object of $\cG$ which is isomorphic to the monoidal unit $1_G$. Since $\cG$ is skeletal, we have no choice but to set $F(\id_*) = 1_G$. However, we have a choice of 2-morphism $F_0 \colon 1_G \to F(\id_*) $ given by $F_0 = [f_0]_{1_G}$ for an element $f_0 \in A$, as well as a choice of 2-morphism $F_{\id_*, \id_*} \colon F(\id_*) \circ F(\id_*) \to F(\id_* \circ \id_*)$ given by $F_{\id_*, \id_*} = [\fcomp]_{1_G}$ for an element $\fcomp \in A$. The compatibility of $F_{\id_*, \id_*}$ with the associators is automatic, whereas the compatibility of $F_0$ and $F_{\id_*, \id_*}$ with the left and right unitors is equivalent to the condition that $\fcomp = f_0^{-1}$.
				
				On the level of 2-morphisms, for $n \in \mathbb{Z}$, we have $F(n) = [f(n)]_{1_G}$ for an element $f(n) \in A$. Since $F$ respects vertical composition of 2-morphisms, we must have that $F(n) = F(1+1+...+1) = F(1)\circ F(1) \circ... \circ F(1) = F(1)^n$, so that $f(n) = f(1)^n \in A$, and the value of $F$ on 2-morphisms is completely determined by the single value $f(1) \in A$. The property of respecting horizontal composition is then automatically satisfied.
			\end{proof}
			
			\subsection{\texorpdfstring{Holonomy on $S^2$: 1-morphisms}{Holonomy on s2: 1-morphisms}}
			\begin{prop}\label{prop: holonomy 1-morphisms}
				Let $E, F \in \Fun(*\sq \pitwoS, * \sq \cG)$ be two functors, determined by the pairs $$(\ecomp, e(1)), (\fcomp, f(1)) \in A^2$$ under the bijection of Proposition \ref{prop: holonomy objects}. Then there is a bijection 
				\begin{align*}
					\{ \text{Natural transformations } E \Rightarrow F \} \EquivTo & \{g \in G \mid f(1) = g \cdot e(1) \}\\
					\sigma \mapsto & \sigma_*.
				\end{align*}
			\end{prop}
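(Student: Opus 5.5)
We unpack the definition of a natural transformation $\sigma\colon E \Rightarrow F$ between functors $* \sq \pitwoS \to * \sq \cG$, as we did for objects in Proposition \ref{prop: holonomy objects}. Such a $\sigma$ consists of a 1-morphism $\sigma_*\colon E(*)\to F(*)$ in $* \sq \cG$, that is, an object $\sigma_* = g \in G$. It also consists of a naturality 2-morphism for every 1-morphism of $* \sq \pitwoS$. The only 1-morphism is $\id_*$, so this is a single 2-morphism
\begin{align*}
\sigma_{\id_*} \colon F(\id_*) \circ \sigma_* \Rightarrow \sigma_* \circ E(\id_*),
\end{align*}
that is, a morphism $1_G \otimes g \to g \otimes 1_G$ in $\cG$. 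Since $\cG$ is skeletal and the unitors are trivial, this is necessarily of the form $[s]_g$ for some $s\in A$.

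The axioms then impose three conditions.
\begin{itemize}
\item \emph{Compatibility with composition.} Naturality must be compatible with the composition coherences $E_{\id_*,\id_*}$ and $F_{\id_*,\id_*}$. We compute both sides in $\Aut_\cG(g)\cong A$ using the tensor rule $[a]_g\otimes[b]_h = [a(g\cdot b)]_{gh}$ and the normalization of $\alpha$, which makes all associators with a $1_G$ entry trivial.
\item \emph{Compatibility with units.} Naturality must also be compatible with $E_0$ and $F_0$.
\item \emph{Naturality on 2-morphisms.} For $n\in\mathbb{Z}$ we need $\sigma_{\id_*}\circ (F(n)\wedge \id_g) = (\id_g \wedge E(n)) \circ \sigma_{\id_*}$. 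Since $A$ is abelian, the $s$ cancels and this reads $f(1)^n = g\cdot e(1)^n$. It suffices to impose it for $n=1$, which is exactly the condition $f(1) = g\cdot e(1)$ in the statement.
\end{itemize}

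The main obstacle, and the step needing care, is the first two conditions. We must determine whether they pin down $s$ uniquely, which would make $\sigma \mapsto \sigma_*$ injective, and whether they impose no further condition on $g$, which would make it surjective onto the stated set.

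We expect the composition axiom to read roughly
\begin{align*}
s^2\, f_{\id_*,\id_*} = s\,\bigl(g\cdot e_{\id_*,\id_*}\bigr),
\end{align*}
up to the ordering of factors. This gives $s = (g\cdot \ecomp)/\fcomp$. The unit axiom, using $\fcomp=f_0^{-1}$ and $\ecomp = e_0^{-1}$, should give $s = (g\cdot e_0^{-1})\, f_0$. These two values agree, so the unit axiom adds no new constraint.

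We will carefully write out the pasting diagrams for both axioms to confirm that this is the precise form. It is here that one must check that there is no extra freedom in $s$ and no hidden constraint relating $\ecomp$, $\fcomp$ and $g$. With that confirmed, $s$ is determined by $g$, and the assignment $\sigma\mapsto\sigma_*$ is a bijection onto $\{g\in G \mid f(1) = g\cdot e(1)\}$.
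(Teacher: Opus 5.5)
Your proposal is the paper's proof. It has the same unpacking into $\sigma_* = g$ and $\sigma_{\id_*} = [s]_g$, and the same three axioms: naturality on $n\in\mathbb{Z}$ gives $f(1) = g\cdot e(1)$, the octagon gives $\fcomp s^2 = s\,(g\cdot\ecomp)$ so that $s$ is determined by $g$, and the unit axiom is redundant because $\fcomp = f_0^{-1}$ and $\ecomp = e_0^{-1}$.

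The octagon does come out as you predict, with one cosmetic caveat. That equation holds for the paper's orientation $\sigma_{\id_*}\colon \sigma_*\circ E(\id_*)\Rightarrow F(\id_*)\circ\sigma_*$. With the orientation you wrote down, one gets $s^2\,(g\cdot\ecomp) = s\,\fcomp$, i.e.\ the inverse value of $s$, which changes nothing in the argument.
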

			
			\begin{proof} 
				Let $\sigma \colon E \Rightarrow F$ be a natural transformation. For the unique object in $* \sq \pitwoS$, $\sigma$ gives an isomorphism $\sigma_* \colon E(*) \to F(*)$ in $* \sq \cG$. Since $E(*)$ and $F(*)$ are both the unique object $\ptG$ of this bicategory, $\sigma_*$ is just an object of $\cG$, i.e. an element $\sigmaG \in G$. Next, for the unique 1-morphism $\id_*$ in $* \sq \pitwoS$, naturality of $\sigma$ is the data of a 2-morphism $\sigma_{\id_*}$ in $* \sq G$ making the following diagram commute: 
				\begin{align*}
					\begin{tikzpicture}[baseline=(basepoint)];
						\node (A) at (0,0) {$E(*) = \ptG$};
						\node (B) at (4,0) {$\ptG = E(*)$};
						\node (C) at (0,-2) {$F(*) = \ptG$};
						\node (D) at (4,-2) {$\ptG = F(*)$.};
						\draw[->] (A) to node [above] {$E(\id_*) = 1_G$}(B);
						\draw[->] ([xshift=-0.6cm]B.south) to node [right] {$\sigma_*$} ([xshift=-0.6cm]D.north);
						\draw[->] ([xshift=0.55cm]A.south) to node [left] {$\sigma_*$}([xshift=0.55cm]C.north);
						\draw[->] (C) to node [below] {$F(\id_*) = 1_G$} (D);
						\path (0,-.5) coordinate (basepoint);
					\end{tikzpicture}
				\end{align*}
				That is, $\sigma_{\id_*} = [s]_{\sigmaG} \colon \sigma_* \circ 1_G  \to 1_G \circ \sigma_*$ for some $s \in A$. 
				
				This 2-morphism is required to satisfy three compatibility conditions: it should be compatible with the action of $E$ and $F$ on 2-morphisms $\id_* \to \id_*$ in $* \sq \pitwoS$; it should be compatible with composition of $\id_*$ with itself; and it should be compatible with the unitors of $*\sq \pitwoS$ and $* \sq \cG$. 
				
				The first condition is that for any $n \in \mathbb{Z}$, giving a 2-morphism $\id_* \to \id_*$ in $* \sq \pitwoS$, the following diagram of 2-morphisms in $* \sq \cG$ (or equivalently, of morphisms in $\cG$) must commute:
				\begin{align*}
					\begin{tikzpicture}[baseline=(basepoint)];
						\node (A) at (0,0) {$\sigma_* \circ E(\id_*)$};
						\node (B) at (4,0) {$\sigma_* \circ E(\id_*)$};
						\node (C) at (0, -2) {$F(\id_*) \circ \sigma_*$};
						\node (D) at (4, -2) {$F(\id_*) \circ \sigma_*.$};
						\draw[->] (A) to node [above] {$\id_{\sigma_*} \wedge E(n)$} (B);
						\draw[->] (B) to node [right] {$\sigma_{id_*}$} (D);
						\draw[->] (A) to node [left] {$\sigma_{id_*}$} (C);
						\draw[->] (C) to node [below] {$F(n) \wedge \id_{\sigma_*}$} (D);
					\end{tikzpicture}
				\end{align*}
				This translates to the following equation in $A$:
				\begin{align*}
					s (\sigmaG \cdot (e(1)^n) = f(1)^n s \text{ for all } n \in \mathbb{Z}.
				\end{align*}
				Using that $A$ is abelian and that the action of $g$ is a group homomorphism, this is equivalent to the single condition on $g$,
				\begin{align}\label{eq: condition 1 for holonomy 1-morphisms}
					f(1) = g \cdot e(1).
				\end{align}
				The second condition is the commutativity of the following octagon:
				\begin{align*}
					\begin{tikzpicture}[
						xscale=1.8, 
						auto,
						every node/.style={font=\footnotesize},
						obj/.style={inner sep=3pt} 
						]
						\node[obj] (A1) at (112.5:3cm) {$(F(\id_*) \circ \sigma_*) \circ E(\id_*)$};
						\node[obj] (A2) at (67.5:3cm)  {$F(\id_*) \circ (\sigma_* \circ E(\id_*))$};
						\node[obj] (A3) at (22.5:3cm)  {$F(\id_*) \circ (F(\id_*) \circ \sigma_*)$};
						\node[obj] (A4) at (337.5:3cm) {$(F(\id_*) \circ F(\id_*)) \circ \sigma_*$};
						\node[obj] (A5) at (292.5:3cm) {$F(\id_*) \circ \sigma_*.$};
						\node[obj] (A6) at (247.5:3cm) {$\sigma_* \circ E(\id_*)$};
						\node[obj] (A7) at (202.5:3cm) {$\sigma_* \circ (E(\id_*) \circ E(\id_*))$};
						\node[obj] (A8) at (157.5:3cm) {$(\sigma_* \circ E(\id_*)) \circ E(\id_*)$};
						\draw[->] (A1) -- node {$\alpha_{1_G, \sigmaG, 1_G}$} (A2);
						\draw[->] (A2) -- node {$\id_{F(\id_*)} \wedge \sigma_{\id_*}$} (A3);
						\draw[->] (A3) -- node {$\alpha_{1_G, 1_G, \sigmaG}^{-1}$} (A4);
						\draw[->] (A4) -- node {$F_{\id_*, \id_*} \wedge \id_{\sigma_*}$} (A5);
						\draw[->] (A6) -- node {$\sigma_{\id_*}$} (A5);
						\draw[->] (A7) -- node {$\id_{\sigma_*} \wedge E_{\id_*, \id_*}$} (A6);
						\draw[->] (A8) -- node {$\alpha_{\sigmaG, 1_G, 1_G}$} (A7);
						\draw[->] (A8) -- node {$\sigma_{\id_*} \wedge \id_{E(\id_*)}$} (A1);
					\end{tikzpicture}
				\end{align*}
				Interpreting this diagram in $A$ and using that $\alpha$ is normalized yields the following equation:
				\begin{align*}
					\fcomp s^2 = s(\sigmaG\cdot \ecomp)
				\end{align*}
				Because $A$ is abelian, this simplifies to
				\begin{align}\label{eq: condition 2 for holonomy 2-morphisms}
					s= \frac{g \cdot \ecomp}{\fcomp},
				\end{align}
				so that $s$ is uniquely determined once $g$ has been fixed. 
				
				The third condition is that the following diagram must commute:
				\begin{align*}
					\begin{tikzpicture}[
						xscale=1.8,
						auto,
						every node/.style={font=\small},
						obj/.style={inner sep=3pt} 
						]
						\node[obj] (A1) at (90:2cm)  {$\sigma_* \circ \id_{E(*)}$};
						\node[obj] (A2) at (18:2cm)  {$\sigma_* \circ E(\id_*)$};
						\node[obj] (A3) at (306:2cm)   {$F(\id_*) \circ \sigma_*.$};
						\node[obj] (A4) at (234:2cm) {$\id_{F(*)} \circ \sigma_*$};
						\node[obj] (A5) at (162:2cm) {$\sigma_*$};
						\draw[->] (A1) -- node {$\id_{\sigma_*}\wedge E_0$} (A2);
						\draw[->] (A2) -- node {$\sigma_{\id_*}$} (A3);
						\draw[->] (A4) -- node {$F_0 \wedge \id_{\sigma_*}$} (A3);
						\draw[->] (A4) -- node {$r_{\sigma_*}$} (A5);
						\draw[->] (A5) -- node {$l_{\sigma_*}^{-1}$} (A1);
					\end{tikzpicture}
				\end{align*}
				Since the left and right unitors of $\cG$ are trivial, this translates to the condition
				\begin{align*}
					f_0 = s(g\cdot e_0) \in A,
				\end{align*}
				which is equivalent to Equation \eqref{eq: condition 2 for holonomy 2-morphisms} because $\ecomp = (e_0)^{-1}$.
				
				In summary, the condition on $f(1) = g \cdot e(1)$ in the statement of the proposition comes from equation \eqref{eq: condition 1 for holonomy 1-morphisms}, while equation \eqref{eq: condition 2 for holonomy 2-morphisms} tells us that $\sigma$ is completely determined by $\sigma_*$. Since there are no other conditions on $\sigma_*$, we have the desired bijection.
			\end{proof}
			
			\begin{rmk}\label{rmk: holonomy 1-morphisms vs stabilizer}
				In particular, we see that $(\ecomp, e(1))$ and $(\fcomp, f(1))$ are isomorphic if and only if $e(1)$ and $f(1)$ are in the same orbit for the action of $G$ on $A$, and in that case, the set of isomorphisms is non-canonically in bijection with the stabilizer $\Stab_G(e(1))$. A choice of bijection is determined by a choice of $g\in G$ such that $f(1) = g \cdot e(1)$. If we consider the set of automorphisms of the functor $E$, the natural choice $g=1_G$ gives us a canonical identification of the set of automorphisms with $\Stab_G(e(1))$. To compare the (higher) group structures, we first need to better understand the composition of natural transformations in the holonomy bicategory.
			\end{rmk}
			
			We claim that, under the bijection of Proposition \ref{prop: holonomy 1-morphisms}, the composition of 1-morphisms corresponds to multiplication of elements in $G$. More precisely, we have the following proposition. 
			
			\begin{prop}\label{prop: composition of holonomy 1-morphisms}
				Let $D, E, F \in \Fun(*\sq \pitwoS, * \sq \cG)$ be three functors, determined by the pairs $$(\dcomp, d(1)), (\ecomp, e(1)), (\fcomp, f(1)) \in A^2$$ under the bijection of Proposition \ref{prop: holonomy objects}. Let $\sigma \colon E \Rightarrow F$, $\sigma' \colon D \Rightarrow E$ be two natural transformations, corresponding to $\sigmaG, \tauG \in G$ under the bijection of Proposition \ref{prop: holonomy 1-morphisms}. Then $\sigma \circ \sigma'$ corresponds to the element $gg' \in G$ under this bijection. 
			\end{prop}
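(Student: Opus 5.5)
The plan is to unwind the definition of composition of natural transformations between functors of bicategories and read off the component at the unique object $*$. By Proposition \ref{prop: holonomy 1-morphisms}, a natural transformation $D \Rightarrow F$ is completely determined by its component at $*$, which is an element of $G$. Its 2-cell component $\sigma_{\id_*}$ is then forced by equation \eqref{eq: condition 2 for holonomy 2-morphisms}. So it suffices to compute $(\sigma\circ\sigma')_*$, and to check that this element does lie in the target set of the bijection.

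First, we recall the standard definition: for pseudonatural transformations $\sigma' \colon D \Rightarrow E$ and $\sigma \colon E \Rightarrow F$, the composite has component
\[
(\sigma\circ\sigma')_* = \sigma_* \circ \sigma'_*,
\]
composed as 1-morphisms in $*\sq\cG$. Its naturality 2-cell $(\sigma\circ\sigma')_{\id_*}$ is the pasting of $\sigma_{\id_*}$ and $\sigma'_{\id_*}$ with the associators $\alpha$ of $*\sq\cG$ inserted. Since composition of 1-morphisms in $*\sq\cG$ is the tensor product of $\cG$, given on objects by multiplication in $G$, we get $(\sigma\circ\sigma')_* = g\otimes g' = gg'$. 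Under the bijection of Proposition \ref{prop: holonomy 1-morphisms}, $\sigma\circ\sigma'$ therefore corresponds to $gg'$.

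As a consistency check, $gg'$ satisfies the required condition: from $e(1) = g'\cdot d(1)$ and $f(1) = g\cdot e(1)$ we obtain $f(1) = (gg')\cdot d(1)$, because the action is a group action. Well-definedness of the composite as a natural transformation is part of the general bicategory theory. So no computation of its 2-cell is needed: it automatically equals $\frac{gg'\cdot d_{\id_*,\id_*}}{f_{\id_*,\id_*}}$. One could optionally verify this directly, using that $\alpha$ is normalized so that the associator terms $\alpha(1_G,g,g')$, $\alpha(g,1_G,g')$, $\alpha(g,g',1_G)$ vanish, and that the pasted 2-cell is $\frac{g\cdot e_{\id_*,\id_*}}{f_{\id_*,\id_*}}\cdot\Bigl(g\cdot \frac{g'\cdot d_{\id_*,\id_*}}{e_{\id_*,\id_*}}\Bigr)$, which telescopes correctly.

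The only real obstacle is fixing the convention for composing pseudonatural transformations, in particular the order $\sigma_*\circ\sigma'_*$ versus $\sigma'_*\circ\sigma_*$, so that the product comes out as $gg'$ rather than $g'g$. I would resolve this by matching the diagrams in the proof of Proposition \ref{prop: holonomy 1-morphisms}, where $\sigma_*$ is postcomposed and multiplication in $\cG$ is $g\otimes h = gh$.
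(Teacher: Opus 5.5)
Your proposal is correct and follows essentially the same route as the paper: you read off $(\sigma\circ\sigma')_* = \sigma_*\circ\sigma'_* = gg'$, check that $(gg')\cdot d(1) = f(1)$, and treat the pasted 2-cell $[s(g\cdot s')]_{gg'}$ only as a consistency check against equation \eqref{eq: condition 2 for holonomy 2-morphisms}. Your telescoping computation of that 2-cell, using that $\alpha$ is normalized, is exactly the verification the paper leaves as ``easy to check.''
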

			
			\begin{proof}
				Recall that the composition $\sigma \circ \sigma'$ of natural transformations is defined as follows:
				
				For the unique object $*$ of $* \sq \pitwoS$, we have the 1-morphism
				\begin{align*}
					(\sigma \circ \sigma')_*: D(*) \xrightarrow{\sigma'_*} E(*) \xrightarrow{\sigma_*} F(*),
				\end{align*}
				i.e. $(\sigma \circ \sigma')_* = \sigmaG \tauG \in G$. We note that since $g\cdot e(1) = f(1)$ and $g' \cdot d(1) = e(1)$, we do indeed have $(gg') \cdot d(1) = f(1)$, as required. This completely determines the behaviour of composition under the bijection of Proposition \ref{prop: holonomy 1-morphisms}, but for completeness, let us recall also the definition of $(\sigma \circ \sigma')_{\id_*}$. 
				
				Indeed, we have
				\begin{align*}
					(\sigma \circ \sigma')_{\id_*} \colon (\sigma \circ \sigma')_* \circ 1_G \to 1_G \circ (\sigma \circ \sigma')_*
				\end{align*}
				given by the following composition
				\begin{align*}
					(\sigma_* \circ \sigma_*') \circ 1_G & \xrightarrow{\alpha_{g,g',1_G}} \sigma_* \circ (\sigma'_* \circ 1_G) \xrightarrow{\id_{\sigma_*} \wedge \sigma'_{\id_*}} \sigma_* \circ (1_G \circ \sigma'_*)\\
					& \quad \xrightarrow{\alpha^{-1}_{g, 1_G, g'}} (\sigma_* \circ 1_G) \circ \sigma'_*  \xrightarrow{\sigma_{\id_*}\wedge \id_{\sigma'_*}} (1_G) \circ \sigma_*) \circ \sigma'_*\\
					& \quad \xrightarrow{\alpha_{1_G,g,g'}} 1_G \circ (\sigma_* \circ \sigma'_*).
				\end{align*}
				That is, $(\sigma \circ \sigma')_{\id_*} = [s(\sigmaG \cdot s')]_{\sigmaG\tauG}$. It is easy to check that this satisfies the appropriate version of equation \eqref{eq: condition 2 for holonomy 2-morphisms}. \end{proof}
			
			\begin{rmk}\label{rmk: holonomy 1-morphisms vs stabilizer group structure}
				In particular, the canonical bijection between the set of automorphisms of a functor $E$ and $\Stab_G(e(1))$ is a group isomorphism. Of course, we should really think of the automorphisms as the objects in a monoidal category or 2-group, rather than a group; we will consider the categorical structure in Proposition \ref{prop: holonomy 2-morphisms} and Corollary \ref{cor: holonomy automorphism 2-groups}.
			\end{rmk}
			
			\subsection{\texorpdfstring{Holonomy on $S^2$: 2-morphisms}{Holonomy on s2: 2-morphisms}}
			Finally, let us consider 2-morphisms in the bicategory $\Fun(*\sq \pi_{\le 2}(S^2), * \sq \cG)$, also known as \emph{modifications} between natural transformations. 
			\begin{prop}\label{prop: holonomy 2-morphisms}
				Let $E, F \in \Fun(*\sq \pitwoS, * \sq \cG)$ be two functors, determined by the pairs $$(\ecomp, e(1)), (\fcomp, f(1)) \in A^2$$ under the bijection of Proposition \ref{prop: holonomy objects}. Let $\sigma, \sigma' \colon E \Rightarrow F$ be two natural transformations, corresponding to $\sigmaG, \tauG \in G $ as in Proposition \ref{prop: holonomy 1-morphisms}. If $\sigmaG \ne \tauG$, then there are no modifications between $\sigma$ and $\sigma'$. If $\sigmaG = \tauG$, then we have a bijection
				\begin{align}\label{eq: holonomy 2-morphisms bijection}
					\{\text{modifications } \sigma \to \sigma' \} & \to A\\
					M & \mapsto m \text{ such that } M_* =[m]_{\sigmaG}.\nonumber
				\end{align}
			\end{prop}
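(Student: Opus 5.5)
I will unpack the definition of a modification directly, in the same way the proofs of Propositions \ref{prop: holonomy objects} and \ref{prop: holonomy 1-morphisms} unpack functors and natural transformations.

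A modification $M\colon \sigma \to \sigma'$ consists of one component for each object of $* \sq \pitwoS$. Since there is only the object $*$, this is a single 2-morphism $M_* \colon \sigma_* \Rightarrow \sigma'_*$ in $* \sq \cG$, that is, a morphism $\sigmaG \to \tauG$ in $\cG$. Because $\cG$ is skeletal with $\Hom_\cG(\sigmaG,\tauG)=\varnothing$ unless $\sigmaG=\tauG$, there are no modifications when $\sigmaG\ne\tauG$. When $\sigmaG=\tauG$, we get $M_* = [m]_{\sigmaG}$ for a unique $m\in A$, so the assignment \eqref{eq: holonomy 2-morphisms bijection} is well defined and injective.

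It remains to show surjectivity: every $m\in A$ satisfies the modification axiom. This axiom is indexed by the 1-morphisms of $* \sq \pitwoS$, so there is only one instance, for $\id_*$. It says the square
\begin{align*}
\sigma_{\id_*} \cv (\id_{F(\id_*)} \ch M_*)\ \text{ and }\ (M_* \ch \id_{E(\id_*)}) \cv \sigma'_{\id_*}
\end{align*}
agree as 2-morphisms $\sigma_*\circ E(\id_*) \Rightarrow F(\id_*)\circ\sigma'_*$, with the composites taken in the appropriate order. Write $\sigma_{\id_*}=[s]_{\sigmaG}$ and $\sigma'_{\id_*}=[s']_{\sigmaG}$.

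By equation \eqref{eq: condition 2 for holonomy 2-morphisms}, $s$ and $s'$ are both equal to $(\sigmaG\cdot\ecomp)/\fcomp$, so $s=s'$. Using the tensor rule $[a]_g\otimes[b]_h=[a(g\cdot b)]_{gh}$ with $E(\id_*)=F(\id_*)=1_G$, whiskering by identities on $1_G$ gives $\id_{1_G}\ch[m]_{\sigmaG} = [m]_{\sigmaG}$. For whiskering on the other side, $[m]_{\sigmaG}\ch\id_{1_G} = [m(\sigmaG\cdot 1_A)]_{\sigmaG} = [m]_{\sigmaG}$. The axiom therefore reduces to $ms=s'm$ in $A$. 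This holds because $A$ is abelian and $s=s'$.

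The step requiring the most care is bookkeeping, not difficulty. One must make sure no hidden associator or unitor terms appear in the modification axiom (they do not, since only whiskering by $1_G$ occurs and $\alpha$ is normalized). One must also confirm that $s=s'$ really follows from \eqref{eq: condition 2 for holonomy 2-morphisms} given that both transformations have the same $\sigmaG$. With these checks done, the map $M\mapsto m$ is a bijection onto $A$.
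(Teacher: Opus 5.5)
Your proposal is correct and follows the same route as the paper: a modification is just the single component $M_*$, skeletality of $\cG$ forces $\sigmaG=\tauG$ and $M_*=[m]_{\sigmaG}$, and the single modification square for $\id_*$ imposes no condition on $m$. Your reduction of that square to $ms=s'm$, with $s=s'$ coming from equation \eqref{eq: condition 2 for holonomy 2-morphisms}, states the condition more precisely than the paper, which writes it loosely as $\tauG m = m\sigmaG$.
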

			
			\begin{proof}
				Consider a modification $M \colon \sigma \to \sigma'$. Explicitly, $m$ is the data of a 2-morphism
				\begin{align*}
					M_* \colon \sigma_* \to \sigma'_*
				\end{align*}
				in $* \sq \cG$. In other words, we must have that $\sigmaG = \tauG \in G$, and $M_* = [m]_\sigmaG$ for some $m \in A$. It remains to show that there are no conditions on this element $m \in A$. Indeed, in principle the 2-morphism $M_*$ is required to be compatible with the 2-morphisms $\sigma_{\id_*}$ and $\sigma'_{\id_*}$, in that the following diagram must commute. 
				\begin{align*}
					\begin{tikzpicture}[baseline=(basepoint)];
						\node (A) at (0,0) {$\sigma_* \circ E(\id_*)$};
						\node (B) at (4,0) {$\sigma'_* \circ E(\id_*)$};
						\node (C) at (0, -2) {$F(\id_*) \circ \sigma_*$};
						\node (D) at (4, -2) {$F(\id_*) \circ \sigma'_*.$};
						\draw[->] (A) to node [above] {$M_* \wedge \id_{1_G}$} (B);
						\draw[->] (B) to node [right] {$\sigma'_{id_*}$} (D);
						\draw[->] (A) to node [left] {$\sigma_{id_*}$} (C);
						\draw[->] (C) to node [below] {$\id_{1_G}\wedge M_*$} (D);
					\end{tikzpicture}
				\end{align*}
				In $A$, this condition becomes
				\begin{align*}
					\tauG m = m \sigmaG,
				\end{align*}
				which simplifies to $\tauG = \sigmaG$, since $A$ is abelian. We conclude that there is no condition on $m \in A$, as desired.
			\end{proof}
			
			\begin{prop}\label{prop: holonomy 2-morphisms vertical composition}
				Under the bijection of Proposition \ref{prop: holonomy 2-morphisms}, vertical composition of modifications corresponds to multiplication in $A$. That is, the bijection \eqref{eq: holonomy 2-morphisms bijection} is an isomorphism of groups. 
			\end{prop}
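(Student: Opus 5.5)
Since a modification between natural transformations of functors out of $*\sq\pitwoS$ is determined by its single component at the unique object $*$, I would prove the statement by unwinding the definition of vertical composition of modifications. Let $\sigma,\sigma',\sigma''\colon E\Rightarrow F$ be natural transformations and let $M\colon\sigma\to\sigma'$, $M'\colon\sigma'\to\sigma''$ be modifications. By Proposition \ref{prop: holonomy 2-morphisms}, both are nonempty only if $\sigma_*=\sigma'_*=\sigma''_*=\sigmaG$. By Proposition \ref{prop: holonomy 1-morphisms}, this forces $\sigma=\sigma'=\sigma''$, because a natural transformation is determined by $\sigma_*$. Write $M_*=[m]_\sigmaG$ and $M'_*=[m']_\sigmaG$.

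The first step is to recall the standard definition of vertical composition in a bicategory of functors: $(M'\cv M)_*=M'_*\cv M_*$. Here the right-hand side is the vertical composition of 2-morphisms in $*\sq\cG$, which by Definition \ref{def:BG for 2-group} is just composition of morphisms in $\cG$. The second step is to apply the definition of $\cG(G,A,\alpha)$ in Example \ref{ex: G, A, alpha}, which gives $[m']_\sigmaG\circ[m]_\sigmaG=[m'm]_\sigmaG$. Hence $M'\cv M$ corresponds to $m'm$ under \eqref{eq: holonomy 2-morphisms bijection}, so the bijection is compatible with composition.

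To conclude that we have a group isomorphism, note that the identity modification $\id_\sigma$ has component $\id_{\sigma_*}=[1_A]_\sigmaG$, so it maps to $1_A$. Composition is therefore preserved, and the identity maps to the identity. Since \eqref{eq: holonomy 2-morphisms bijection} is already known to be a bijection, the modifications $\sigma\to\sigma$ form a group isomorphic to $A$, and every modification is invertible, with inverse corresponding to $m^{-1}$.

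There is no real obstacle here. The only point that needs care is making sure that no associator or unitor of $*\sq\cG$ enters the vertical composition. This holds because vertical composition in a functor bicategory is computed componentwise, and the components live in the groupoid $\cG$ itself.
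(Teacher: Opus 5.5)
Your proposal is correct and follows essentially the same route as the paper: both reduce the modification to its component at $*$ and use $[m']_g \circ [m]_g = [m'm]_g$ in $\cG$. Your extra remarks, that $\sigma=\sigma'=\sigma''$ and that the identity modification maps to $1_A$, make explicit what the paper's short proof leaves implicit.
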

			
			\begin{proof}
				Given $E, F$ as in the statement of Proposition \ref{prop: holonomy 2-morphisms}, let $\sigma, \sigma', \sigma''$ be three natural transformations between them, corresponding to $g,g',g'''\in G$. Suppose that we have modifications $M \colon \sigma \to \sigma', M' \colon \sigma' \to \sigma''$. By Proposition \ref{prop: holonomy 2-morphisms}, we know that $g=g'=g'' \in G$, and that $M_*=[m]_g, M'_*=[m']_g$ for some $m, m' \in A$. The composed modification $M' \circ M$ is determined by the 2-morphism $(M' \circ M)_* = M'_* \circ M_* = [m'm]_g$, as claimed. 
			\end{proof}
			
			\begin{prop}\label{prop: holonomy 2-morphisms horizontal composition} 
				Under the bijection of Proposition \ref{prop: holonomy 2-morphisms}, horizontal composition admits the following natural description: suppose we have $D,E,F$ three functors classified by pairs $(a_1,a_2), (b_1,b_2),(c_1,c_2) \in A^2$,  and two natural transformations $\sigma\colon D \Rightarrow E$ and $\sigma' \colon E \to F$ be natural transformations, determined by $g, g' \in G$.  Let $M \colon \sigma \to \sigma, M' \colon \sigma' \to \sigma'$ be modifications, determined by $M_* = [m]_g$ and $M'_* = [m']_{g'}$ for some $m, m' \in A$, depicted in the diagram on the left.  
				\begin{align}\label{eq: holonomy horizontal composition diagram}
					\begin{tikzpicture}[scale=.8, every node/.style={font=\scriptsize}]
						\node (A) at (0,0) {$(a_1,a_2)$};
						\node (B) at (3,0) {$(b_1,g \cdot a_2)$};
						\node (C) at (6.5,0) {$(c_1,g'g \cdot a_2)$};
						\node (D) at (8.5,0) {$(a_1,a_2)$};
						\node (E) at (13.5, 0) {$(c_1,g'g \cdot a_2).$};
						\draw[->] (A) to[bend left=40] node[name=f, above] {$g$} (B);
						\draw[->] (A) to[bend right=40] node[name=fprime, below] {$g$} (B);
						\draw[->] (B) to[bend left=40] node[name=g, above] {$g'$} (C);
						\draw[->] (B) to[bend right=40] node[name=gprime, below] {$g'$} (C);
						\draw[->] (D) to[bend left=35] node[name=h, above] {$g'g$} (E);
						\draw[->] (D) to [bend right=35] node[name=hprime, below] {$g'g$} (E);
						\draw[->, double, double distance=1.2pt, shorten <=5pt, shorten >=5pt] (f) -- node[right] {$\! m$} (fprime);
						\draw[->, double, double distance=1.2pt, shorten <=5pt, shorten >=5pt] (g) -- node[right] {$\!m'$} (gprime);
						\draw[->, double, double distance=1.2pt, shorten <=5pt, shorten >=5pt] (h) -- node[right] {$\!m' \wedge m\!$} (hprime);
					\end{tikzpicture}
				\end{align}
				Then the horizontal composition of the modifications, as shown in the diagram on the right, corresponds to the element $m'(g'\cdot m) \in A$. 
			\end{prop}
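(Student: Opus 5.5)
We will compute the horizontal composite $M' \wedge M$ directly from the definition of horizontal composition of modifications in the functor bicategory $\Fun(*\sq \pitwoS, *\sq\cG)$. Recall that for modifications $M\colon \sigma \to \sigma$ (between $D$ and $E$) and $M'\colon\sigma'\to\sigma'$ (between $E$ and $F$), the horizontal composite is the modification $\sigma'\circ\sigma \to \sigma'\circ\sigma$ whose component at the unique object $*$ is the horizontal composite of 2-morphisms in the target bicategory,
\begin{align*}
	(M'\wedge M)_* = M'_* \wedge M_* \colon \sigma'_*\circ\sigma_* \to \sigma'_*\circ \sigma_*.
\end{align*}
By Proposition \ref{prop: composition of holonomy 1-morphisms}, the composite $\sigma'\circ\sigma$ corresponds to $g'g\in G$, so by Proposition \ref{prop: holonomy 2-morphisms} the composite modification is classified by the unique $n\in A$ with $(M'\wedge M)_* = [n]_{g'g}$. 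It therefore suffices to compute $M'_*\wedge M_*$ in $*\sq\cG$.

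In $*\sq\cG$, horizontal composition of 2-morphisms is given (Definition \ref{def:BG for 2-group}) by the tensor product of morphisms in $\cG$, with the convention that composition of 1-morphisms $\sigma'_*\circ\sigma_*$ corresponds to $g'\otimes g = g'g$. Using the formula of Example \ref{ex: G, A, alpha}, $[m']_{g'}\otimes[m]_g = [m'(g'\cdot m)]_{g'g}$, so $n = m'(g'\cdot m)$, as claimed. We also record that the sources and targets in the diagram \eqref{eq: holonomy horizontal composition diagram} are consistent: by Proposition \ref{prop: holonomy 1-morphisms}, $b_2 = g\cdot a_2$ and $c_2 = g'\cdot b_2 = g'g\cdot a_2$.

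The only point needing care is the ordering convention: one must check that the identification of $\sigma'_*\circ\sigma_*$ with $g'\otimes g$ (rather than $g\otimes g'$) used in Proposition \ref{prop: composition of holonomy 1-morphisms} is the same one used for horizontal composition of 2-morphisms, so that the twisting by the action falls on $m$ via $g'$ and not on $m'$ via $g$. Since no associators appear in a single horizontal composite of components (the modification axioms impose no further conditions, as shown in Proposition \ref{prop: holonomy 2-morphisms}), no cocycle terms from $\alpha$ arise, and the computation is complete once this convention is fixed.
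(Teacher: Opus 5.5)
Your proposal is correct and is essentially the paper's argument: the component of $M'\wedge M$ at $*$ is the horizontal composite $[m']_{g'}\wedge[m]_{g}$ in $*\sq\cG$, and the tensor-product formula of Example~\ref{ex: G, A, alpha} gives $[m'(g'\cdot m)]_{g'g}$. Your extra checks on the ordering convention and on sources and targets agree with Proposition~\ref{prop: composition of holonomy 1-morphisms} and are harmless elaborations.
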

			
			\begin{proof}
				Recall that the horizontal composition $M' \wedge M \colon \sigma' \circ \sigma \to \sigma' \circ \sigma$ is determined by $(M' \wedge M)_* \colon (\sigma' \circ \sigma)_* \to (\sigma' \circ \sigma)_*$, given by
				\begin{align*}
					g'g \xrightarrow{[m']_{g'} \wedge [m]_{g}} g'g.
				\end{align*}
				In other words, $(M' \wedge M)_*=[m'(g' \cdot m)]_{g'g}$ as claimed. 
			\end{proof}
			
			\subsection{\texorpdfstring{A simple description of the holonomy bicategory on $S^2$}{A simple description of the holonomy bicategory on s2}}
			Motivated by the results of the previous three subsections, we make the following definition.
			
			\begin{defn} \label{def: bicategory Y}
				Let $G,A, \alpha$ be as in Example \ref{ex: G, A, alpha}. Let $\cY$ be the bicategory with 
				\begin{itemize}
					\item objects $A \times A$;
					\item morphisms $\Hom_\cY((a_1,a_2),(a'_1, a'_2)) = \left\{g \in G \mid a'_2 = g \cdot a_2\right\}$;
					\item 2-morphisms $2\Hom_\cY(g,g') = \left\{ \begin{array}{cc}
						A &  \text {if }g = g'\\
						\varnothing & \text{else;}
					\end{array}
					\right.$
					\item composition of 1-morphisms given by multiplication in $G$;
					\item associator given by $\alpha$;
					\item identity 1-morphisms given by $\id_{(a_1,a_2)} = 1_G$;
					\item left and right unitors given by $1_A$;
					\item vertical composition of 2-morphisms given by multiplication in $A$;
					\item identity 2-morphisms given by $\id_{g} = 1_A$;
					\item horizontal composition of 2-morphisms given by the diagram \eqref{eq: holonomy horizontal composition diagram} with $m' \wedge m = m'(g' \cdot m) \in A=2\Hom_\cY(g'g, g'g).$
				\end{itemize}
			\end{defn}
			\begin{thm}\label{thm: holonomy and Y}
				The data $\cY$ forms a bicategory, and we have a canonical isomorphism 
				\begin{align*}
					\Fun(*\sq \pi_{\le 2}(S^2), * \sq \cG) \cong \mathcal{Y}.
				\end{align*}
			\end{thm}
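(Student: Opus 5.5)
The plan is to assemble the bijections of Propositions \ref{prop: holonomy objects}, \ref{prop: holonomy 1-morphisms} and \ref{prop: holonomy 2-morphisms} into a strict 2-functor $\Phi \colon \Fun(*\sq \pitwoS, * \sq \cG) \to \cY$. We then show that $\Phi$ is bijective on objects, 1-morphisms and 2-morphisms, and that it transports every piece of structure exactly. Once this is done, the bicategory axioms for $\cY$ follow for free, because they are transported from the functor bicategory, which is a bicategory by general theory. So we never check the pentagon or triangle directly in $\cY$.

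Concretely, $\Phi(F) = (\fcomp, f(1))$, $\Phi(\sigma) = \sigma_*$ and $\Phi(M) = m$ where $M_* = [m]_{\sigma_*}$. By Proposition \ref{prop: holonomy objects}, $\Phi$ is a bijection on objects. By Proposition \ref{prop: holonomy 1-morphisms}, it is a bijection from natural transformations $E \Rightarrow F$ onto $\{g \mid f(1) = g\cdot e(1)\} = \Hom_\cY(\Phi E, \Phi F)$. By Proposition \ref{prop: holonomy 2-morphisms}, it is a bijection on modifications, with empty sets matching when $g \neq g'$.

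The compatibilities come from the earlier propositions as follows:
\begin{itemize}
\item composition of 1-morphisms goes to multiplication in $G$, by Proposition \ref{prop: composition of holonomy 1-morphisms};
\item vertical composition of 2-morphisms goes to multiplication in $A$, by Proposition \ref{prop: holonomy 2-morphisms vertical composition};
\item horizontal composition goes to $m'(g'\cdot m)$, by Proposition \ref{prop: holonomy 2-morphisms horizontal composition};
\item the identity transformation has $\id_* = 1_G$, and the identity modification has component $[1_A]$.
\end{itemize}

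The remaining structure to match is the associator and the unitors of the functor bicategory. For three composable transformations $\sigma'',\sigma',\sigma$, the associator modification $(\sigma''\circ\sigma')\circ\sigma \to \sigma''\circ(\sigma'\circ\sigma)$ has $*$-component equal to the associator of the target bicategory $*\sq\cG$ at $(g'',g',g)$. That component is $[\alpha(g'',g',g)]$, so its image in $\cY$ is $\alpha$, as required. Likewise, the unitor modifications have components given by the unitors of $\cG$, which are trivial, so they map to $1_A$.

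I expect the main obstacle to be this last step: confirming that the associator and unitor modifications of $\Fun$ are precisely these components, with no extra correction terms coming from the $\sigma_{\id_*}$ data. There are two ways a mismatch could enter. First, one might worry whether the associator on the $*$-components is $\alpha(g'',g',g)$ or its inverse, depending on orientation conventions. Second, one must check that the candidate modification is actually a modification. For the second point, Proposition \ref{prop: holonomy 2-morphisms} shows that any element of $A$ defines a modification once the $G$-components agree. So validity is automatic, and only the orientation of $\alpha$ needs checking against the standard definition of composition in $\Fun$. With every structure map identified, $\Phi$ is an isomorphism of bicategory data, hence $\cY$ is a bicategory and $\Phi$ is the claimed canonical isomorphism.
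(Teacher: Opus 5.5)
Your proposal is correct and follows the paper's proof. You assemble the bijections of Propositions \ref{prop: holonomy objects}, \ref{prop: holonomy 1-morphisms} and \ref{prop: holonomy 2-morphisms} into a structure-preserving bijection via the composition propositions, identify the associator and unitors of the functor bicategory with those of $*\sq\cG$ (namely $\alpha$ and the trivial unitors), and transport the bicategory axioms to $\cY$. The orientation worry is harmless: the associator of $\Fun$ at $(\sigma'',\sigma',\sigma)$ has component $\alpha_{\sigma''_*,\sigma'_*,\sigma_*}$, which points in the same direction as the associator of $\cY$.
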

			\begin{rmk}
				Here we really mean an isomorphism, not just an equivalence, since we have bijections on the level of objects, morphisms, and 2-morphisms, strictly respecting the categorical structures. Therefore the bicategory axioms on $\cY$ follow immediately from the existence of this isomorphism. 
			\end{rmk}
			\begin{proof}
				Proposition \ref{prop: holonomy objects} gives us a bijection on the level of objects. Proposition \ref{prop: holonomy 1-morphisms} gives us a bijection on the level of 1-morphisms, which respects composition and preserves identity morphisms by Proposition \ref{prop: composition of holonomy 1-morphisms}. Proposition \ref{prop: holonomy 2-morphisms} gives us a bijection on the level of 2-morphisms, which  respects vertical composition by Proposition \ref{prop: holonomy 2-morphisms vertical composition} and horizontal composition by Proposition \ref{prop: holonomy 2-morphisms horizontal composition}. It remains to compare the left and right unitors and the associators. However, these structure maps for the bicategory $\Fun(* \sq \pitwoS, * \sq \cG)$ are induced from those of $* \sq \cG$, which in turn are induced from those of $\cG$: i.e. the unitors are trivial, and the associator is given by $\alpha$, just as in $\cY$. 
			\end{proof}
			
			\begin{cor}\label{cor: holonomy automorphism 2-groups}
				Let $E \in \Fun(* \sq \pitwoS, * \sq \cG)$ be a functor, determined by a pair $(\ecomp, e(1))$. Let $\alpha_\vert$ denote the restriction of the cocycle $\alpha$ to the subgroup $\Stab_G(e(1)) \subset G$. Then we have canonical and strict isomorphisms of 2-groups 
				\begin{align*}
					\Aut(E) \cong \Aut_\cY(\ecomp, e(1)) \cong \cG(\Stab_{G}(e(1)), A, \alpha_\vert).
				\end{align*}
			\end{cor}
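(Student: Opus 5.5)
The plan is to obtain both isomorphisms by restricting the structures already identified, composing the isomorphism of Theorem \ref{thm: holonomy and Y} with a direct reading of Definition \ref{def: bicategory Y}. Throughout, an automorphism 2-group of an object in a bicategory is the monoidal groupoid whose objects are its 1-automorphisms, whose morphisms are the 2-morphisms between them, whose tensor product is composition of 1-morphisms and horizontal composition of 2-morphisms, and whose associator and unitors are those of the bicategory.

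For the first isomorphism, $\Aut(E) \cong \Aut_\cY(\ecomp, e(1))$, I will use the canonical isomorphism $\Fun(*\sq \pitwoS, *\sq\cG)\cong\cY$ of Theorem \ref{thm: holonomy and Y}. It is bijective on objects, 1-morphisms and 2-morphisms, and it strictly respects all compositions, identities, unitors and associators. Its restriction to the hom-category $\Hom(E,E)\to\Hom_\cY((\ecomp,e(1)),(\ecomp,e(1)))$ is therefore an isomorphism of categories. This restriction strictly preserves the tensor product, because it preserves composition of 1-morphisms and horizontal composition of 2-morphisms. It also matches unit, unitors and associator, giving a strict monoidal isomorphism.

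For the second isomorphism, I will read off $\Aut_\cY(\ecomp,e(1))$ from Definition \ref{def: bicategory Y} and compare it item by item with Example \ref{ex: G, A, alpha} for the triple $(\Stab_G(e(1)),A,\alpha_\vert)$.
\begin{itemize}
\item Objects are $g\in G$ with $e(1)=g\cdot e(1)$, that is, exactly $\Stab_G(e(1))$.
\item Morphisms $g\to g'$ are $A$ if $g=g'$ and empty otherwise, matching $\Hom_\cG(g,g')$.
\item Composition is multiplication in $A$, matching $[a]_g\circ[b]_g=[ab]_g$.
\item The tensor product on objects is $g'\otimes g=g'g$. On morphisms it is $m'\wedge m=m'(g'\cdot m)$, matching $[m']_{g'}\otimes[m]_g=[m'(g'\cdot m)]_{g'g}$.
\item The unit is $1_G$, the unitors are trivial, and the associator is $\alpha$ restricted to $\Stab_G(e(1))^3$.
\end{itemize}
Two small verifications are needed. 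First, $\Stab_G(e(1))$ is a subgroup acting on $A$ by restriction. Second, $\alpha_\vert$ is still a normalized 3-cocycle, since the cocycle and normalization identities hold for all elements of $G$. Given these, the identity on underlying data is a strict isomorphism of 2-groups, canonical because no choices are made.

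The only potential subtlety is a convention check: the order of composition in $\cY$ must match the order of $\otimes$ in $\cG$, so that the horizontal composition formula $m'(g'\cdot m)$ agrees with $[a]_g\otimes[b]_h=[a(g\cdot b)]_{gh}$ with $g'$ in the first slot. This is immediate from Proposition \ref{prop: holonomy 2-morphisms horizontal composition}. Once it is fixed, the associator convention also matches, $\alpha_{g,h,k}$ going from $(gh)k$ to $g(hk)$. This is the same argument as Lemma \ref{lemma: S automorphism 2-group} on the $\cS$ side.
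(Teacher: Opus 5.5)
Your proposal is correct and follows essentially the same route as the paper, whose proof simply cites Theorem \ref{thm: holonomy and Y} together with Remark \ref{rmk: holonomy 1-morphisms vs stabilizer group structure}. You unpack exactly this: you restrict the isomorphism $\Fun(*\sq\pitwoS,*\sq\cG)\cong\cY$ to endomorphisms of $E$, and you read off $\Aut_\cY(\ecomp,e(1))\cong\cG(\Stab_G(e(1)),A,\alpha_\vert)$ directly from Definition \ref{def: bicategory Y}; the order-of-composition check you flag is the only point needing care.
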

			
			\begin{proof}
				This follows from Theorem \ref{thm: holonomy and Y} together with Remark \ref{rmk: holonomy 1-morphisms vs stabilizer group structure}.
			\end{proof}
			
			\section{Proof of the main theorem}\label{sec: proof}
			Recall the skeletal bicategory $\cS$, defined using a choice of orbit representatives $C \subset A$.
			\begin{thm}\label{thm: S and Y}
				The inclusion 
				\begin{align*}
					C & \to A \times A \\
					a & \mapsto (1_A, a)
				\end{align*}
				extends to a strict functor 
				\begin{align*}
					F \colon \cS \to \cY
				\end{align*}
				giving an equivalence of bicategories. In particular, $\cS$ is a bicategorical skeleton of $\cY$. 
			\end{thm}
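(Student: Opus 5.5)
We define $F$ on all levels as the identity on the underlying data, and then check that it is a strict functor and an equivalence (essentially surjective on objects, and an equivalence on each hom-category).

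\textbf{Definition of $F$ and strict functoriality.} Set $F(a) = (1_A, a)$ on objects. For a 1-morphism $g \in \Hom_\cS(a,a) = \Stab_G(a)$, set $F(g) = g$. This is a legitimate element of $\Hom_\cY\bigl((1_A,a),(1_A,a)\bigr) = \{g \in G \mid a = g\cdot a\}$, which is exactly $\Stab_G(a)$. On 2-morphisms, set $F(m) = m \in A$. Comparing Definition \ref{def: S} with Definition \ref{def: bicategory Y} item by item, the structure data agree on the nose:
\begin{itemize}
\item composition of 1-morphisms is multiplication in $G$ in both;
\item the associators are both given by $\alpha$;
\item identities are $1_G$ and the unitors are $1_A$ in both;
\item vertical composition is multiplication in $A$ in both;
\item horizontal composition is $m'\wedge m = m'(g'\cdot m)$ in both.
\end{itemize}
So $F$ strictly preserves all of this, and we may take its coherence 2-cells to be identities. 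The bicategory axioms for $F$ are then trivial.

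\textbf{Local equivalence.} Take $a, a' \in C$. If $a \neq a'$, then $\Hom_\cS(a,a')$ is empty. So is $\Hom_\cY\bigl((1_A,a),(1_A,a')\bigr)$: a morphism would be some $g$ with $a' = g\cdot a$, forcing $a$ and $a'$ into the same orbit, hence $a = a'$ because $C$ is a set of orbit representatives. If $a = a'$, the functor $\Hom_\cS(a,a) \to \Hom_\cY\bigl((1_A,a),(1_A,a)\bigr)$ is the identity on objects, as both are $\Stab_G(a)$. It is also the identity on morphisms: both sides have $A$ between equal 1-morphisms and nothing otherwise. Hence it is an isomorphism of categories. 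This is consistent with Lemma \ref{lemma: S automorphism 2-group} and Corollary \ref{cor: holonomy automorphism 2-groups}, which identify both automorphism 2-groups with $\cG(\Stab_G(a),A,\alpha_\vert)$.

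\textbf{Essential surjectivity.} This is the only step with content, and it is mild. Take an object $(a_1,a_2)$ of $\cY$, and choose $a \in C$ and $g \in G$ with $a = g\cdot a_2$. Then $g \in \Hom_\cY\bigl((a_1,a_2),(1_A,a)\bigr)$, since the morphisms of $\cY$ impose no condition on the first coordinate. Every 1-morphism in $\cY$ is an equivalence: the inverse of $g$ is $g^{-1}$, and the 2-isomorphisms $g g^{-1} \Rightarrow 1_G$ and $g^{-1}g \Rightarrow 1_G$ exist because $2\Hom_\cY(1_G,1_G) = A$ is nonempty. It is also worth checking that the first coordinate is invisible to all the structure of $\cY$ (the pair $(\ecomp, f_0)$ is absorbed by $s$ in Proposition \ref{prop: holonomy 1-morphisms}). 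This check is what makes the choice $a_1 = 1_A$ harmless.

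Combining the three steps with the standard criterion (a functor of bicategories is an equivalence iff it is essentially surjective and a local equivalence) gives the theorem. Composing with Theorem \ref{thm: S and Bun} and Theorem \ref{thm: holonomy and Y} then yields $\Hol$.
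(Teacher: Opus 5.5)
Your proposal is correct and is essentially the paper's proof. Both arguments take $F$ to be the identity on 1- and 2-morphisms, note that the hom-categories on both sides are empty unless $a=a'$ because $C$ consists of orbit representatives, and obtain essential surjectivity from an element $g$ relating $a_2$ to its representative in $C$. The differences are cosmetic: you check strictness by comparing the two definitions directly, where the paper cites the identification of both automorphism 2-groups with $\cG(\Stab_G(a),A,\alpha_\vert)$, and your isomorphism runs from $(a_1,a_2)$ to $(1_A,a)$ rather than the reverse.
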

			
			\begin{proof}
				
				
				
				
				We will extend the assignment $a \mapsto (1_A,a)$ to a functor $F$ as follows. We need equivalences of $\Hom$-categories 
				\begin{align*}
					\Hom_\cS(a,a') \to \Hom_\cY((1_A, a), (1_A, a')).
				\end{align*}
				for all $a, a' \in C$, compatible with composition. We note that, by definition, the category $\Hom_\cS(a,a')$ is non-empty if and only if $a = a'$. On the other hand, by Remark \ref{rmk: holonomy 1-morphisms vs stabilizer}, the category $\Hom_\cY((1_A, a), (1_A, a'))$ is non-empty if and only if $a$ and $a'$ are in the same $G$-orbit. Since $a, a' \in C$, this happens if and only if $a=a'$ as well. 
				
				So it suffices to provide equivalences of the automorphism 2-groups 
				\begin{align*}
					\Aut_\cS(a) \to \Aut_\cY((1_G,a))
				\end{align*}
				for each $a \in C$. But we have already observed (Corollary \ref{cor: holonomy automorphism 2-groups} and Lemma \ref{lemma: S automorphism 2-group}) that both sides are strictly isomorphic to $\cG(\Stab_G(a), A, \alpha_\vert)$, so we define our functor $F$ to act as the identity on the level of 1-morphisms and 2-morphisms. 
				
				It remains to check that $F$ is essentially surjective, that is that any object $(a_1,a_2)$ of $\cY$ is isomorphic to one of the form $(1_A,a)$ for some $a \in C$. Indeed, we take $a \in C$ to be the representative of the orbit $G\cdot a_2$, so that there exists at least one $g \in G$ such that $a_2 = g \cdot a$. By Definition \ref{def: bicategory Y}, $g$ gives an isomorphism in $\Hom_\cY((1_A, a),(a_1,a_2))$, as desired. 
			\end{proof}
			
			We now have all the ingredients in place to provide an explicit equivalence of bicategories, proving that principal $\cG$-bundles on $S^2$ are indeed classified by holonomy. 
			\begin{thm}
				We have an equivalence of bicategories
				\begin{align*}
					\Hol \colon \Bun_{\cG}(S^2) \to \Fun(* \sq \pitwoS, * \sq \cG).
				\end{align*}
				Under the isomorphism $\Fun(* \sq \pitwoS, * \sq \cG) \cong \cY$ of Theorem \ref{thm: holonomy and Y}, this equivalence is given by
				\begin{description}
					\item[On objects] $(\rho, \gamma) \mapsto (1_A, a_{(\rho, \gamma)})$, where $a_{(\rho, \gamma)} = \frac{\gamma_{123}\gamma_{134}}{(\rho_{12}\cdot \gamma_{234})\gamma_{124}}\alpha(\rho_{12}, \rho_{23}, \rho_{34})$;
					\item[On 1-morphisms] $(t, \eta) \mapsto t_1$;
					\item[On 2-morphisms] $\omega \mapsto \omega_1$. 
					
				\end{description}
			\end{thm}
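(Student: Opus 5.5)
The plan is to obtain $\Hol$ by composing the equivalences already constructed. Theorem \ref{thm: S and Bun} gives a strict equivalence $\Psi\colon \cS \to \Bun_\cG(S^2)$, Theorem \ref{thm: S and Y} gives a strict equivalence $F\colon \cS\to\cY$, and Theorem \ref{thm: holonomy and Y} gives an isomorphism $\cY\cong \Fun(*\sq\pitwoS,*\sq\cG)$. Choosing a pseudo-inverse of $\Psi$ and composing yields an equivalence $\Bun_\cG(S^2)\to\cY$, which is what we call $\Hol$. Such a composite is only defined up to equivalence, though, so the real content is that it can be taken to be the explicit assignment in the statement. I will therefore define $\Hol$ directly by those formulas, check that it is a functor, and check that $\Hol\circ\Psi = F$ strictly. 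Two-out-of-three for equivalences of bicategories then gives the claim.

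\textbf{Step 1: well-definedness.} For an object $(\rho,\gamma)$, the element $(1_A,a_{(\rho,\gamma)})$ is an object of $\cY$, and nothing needs checking. For a 1-morphism $(t,\eta)\colon(\rho,\gamma)\to(\rho',\gamma')$, I must show $t_1\in\Hom_\cY\bigl((1_A,a_{(\rho,\gamma)}),(1_A,a_{(\rho',\gamma')})\bigr)$, i.e.\ $a_{(\rho',\gamma')} = t_1\cdot a_{(\rho,\gamma)}$. This is exactly the concordance condition \eqref{eq: concordance condition} after rearranging, using that $G$ acts by homomorphisms, that $A$ is abelian, and that $\rho'_{12}=t_1\rho_{12}t_2^{-1}$ in the term $(t_1\rho_{12})\cdot\gamma_{234}$. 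For 2-morphisms, Definition \ref{def: 2-morphisms} forces equal $t$, so $\omega\mapsto\omega_1\in A=2\Hom_\cY(t_1,t_1)$ makes sense.

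\textbf{Step 2: functoriality.} Vertical composition is componentwise, so it is preserved. Horizontal composition gives $(\omega'\wedge\omega)_1=\omega'_1(t'_1\cdot\omega_1)$, which matches Definition \ref{def: bicategory Y}. Composition of 1-morphisms sends $t_1,t'_1$ to $t'_1t_1$, and identities go to $1_G$.

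\textbf{Step 3: the coherence 2-cells, the expected obstacle.} The composite $\Hol\bigl((t'',\eta'')\circ((t',\eta')\circ(t,\eta))\bigr)$ and the reassociated one both map to $t''_1t'_1t_1$. However, the associator in $\Bun_\cG(X)$ is $\tilde\alpha_i=\alpha(t''_i,t'_i,t_i)$, whose first component is $\alpha(t''_1,t'_1,t_1)$, and this is precisely the associator of $\cY$. The unitors are trivial on both sides. So $\Hol$ should be a strict functor with identity compositors, and the main work is confirming that the associator 2-cells genuinely match; this comes down to the computation in the proof of weak associativity, read off at $i=1$.

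\textbf{Step 4: conclusion.} On $\Psi(a)=(1_G,a)$, all $\rho=1_G$ and all generators of $\gamma$ are trivial except $\gamma_{123}=a$, so $a_{\Psi(a)}=a$ by normalization of $\alpha$. Hence $\Hol\circ\Psi$ sends $a\mapsto(1_A,a)$, $g\mapsto g$ and $b\mapsto b$, which is $F$. Since $\Psi$ and $F$ are equivalences (Theorems \ref{thm: S and Bun} and \ref{thm: S and Y}), $\Hol$ is an equivalence: it is essentially surjective because $F$ is, and it is a local equivalence because $\Psi$ is essentially surjective and $\Hol$ preserves isomorphisms. Transporting along Theorem \ref{thm: holonomy and Y} gives the equivalence into $\Fun(*\sq\pitwoS,*\sq\cG)$.
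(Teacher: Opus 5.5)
Your proposal is correct and follows the paper's own proof essentially step for step. Like the paper, you define $\Hol$ by the explicit formulas, check strict functoriality (the concordance condition is exactly $a_{(\rho',\gamma')}=t_1\cdot a_{(\rho,\gamma)}$, and the associator component $\tilde\alpha_1=\alpha(t''_1,t'_1,t_1)$ matches that of $\cY$), and conclude from $\Hol\circ\Psi=F$ together with Theorems \ref{thm: S and Bun} and \ref{thm: S and Y}; the only quibble is that Step 1 does not need $\rho'_{12}=t_1\rho_{12}t_2^{-1}$, since $t_1\cdot(\rho_{12}\cdot\gamma_{234})=(t_1\rho_{12})\cdot\gamma_{234}$ already matches the concordance condition term for term.
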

			
			\begin{proof}
				We need to show that the above assignments give a well-defined functor $\Hol$. The fact that $\Hol$ is an equivalence of bicategories will then follow from the commutativity of the triangle, and the fact that $\Psi$ and $F$ are equivalences, by Theorems \ref{thm: S and Bun} and \ref{thm: S and Y}, respectively.
				\begin{align*}
					\begin{tikzpicture}[baseline=(basepoint)];
						\node (A) at (0,0) {$\Bun_\cG(S^2)$};
						\node (B) at (4,0) {$\cY.$};
						\node (C) at (2, -1) {$\cS$};
						\draw[->] (C) to node [below left] {$\Psi$} (A);
						\draw[->] (C) to node [below right] {$F$} (B);
						\draw[->] (A) to node [above] {$\Hol$} (B);
					\end{tikzpicture}
				\end{align*}
				Let us check that $\Hol$ is indeed a functor. First, let $(t, \eta) \colon (\rho, \gamma) \to (\rho', \gamma')$ be a 1-morphism of principal $\cG$-bundles. We note that the concordance condition \eqref{eq: concordance condition} is exactly the condition
				\begin{align*}
					\frac{a_{(\rho', \gamma')}}{t_1 \cdot a_{(\rho, \gamma)}} = 1,
				\end{align*}
				so that $a_{(\rho', \gamma')} = t_1 \cdot a_{(\rho, \gamma)}$, and $t_1$ is indeed a 1-morphism $(1_A, a_{(\rho, \gamma)}) \to (1_A, a_{(\rho', \gamma')})$ in $\cY$. It is also easy to see from the definition of composition in $\Bun_\cG(S^2)$ (Definition \ref{def: composition of Cech 1-morphisms}) that $\Hol$ strictly preserves composition. It is even more straightforward to see that $\Hol$ sends 2-morphisms to 2-morphisms in a way compatible with vertical and horizontal compositions, and that intertwines the associators of the two bicategories. 
				
				Finally, it is a straightforward calculation to see that the diagram commutes, so that $\Hol$ is an equivalence, as claimed. 
			\end{proof}

			\appendix
			\section{\texorpdfstring{Comparison: a geometric definition of $\cG$-bundles}{Comparaison : a geometric definition of principal G-bundles}}\label{sec: appendix compare}
			
			There are many alternative definitions of principal 2-groups in the literature, many of which are known to be equivalent (at least under some conditions on the 2-group $\cG$). Let sketch a comparison of our Definition \ref{def: Cech data objects} with what we'll call the \emph{geometric} definition \cite{Bartels, Schommer-Pries}.  We provide this definition here because it is transparently a generalization of the classic definition of a principal $G$-bundle over $X$ as a manifold $P \to X$ equipped with an action of $G$ satisfying certain properties. However, the definition takes place in the bicategory of smooth stacks, and we do not take this as our primary definition because we wish the main body of our article to be accessible to readers without background in stacks. 
			
			\begin{conventions}
				The bicategory of smooth stacks is equivalent to the bicategory $\Bibun$, whose objects are Lie groupoids, whose 1-morphisms are (right principal) bibundles, and whose 2-morphisms are isomorphisms of bibundles \cite{Pronk, Lerman}. We recall that a smooth functor between Lie groupoids gives rise to a bibundle, giving a fully faithful embedding of the category of smooth functors and smooth natural transformations between any two fixed Lie groupoids, and the hom-category in $\Bibun$ between the same two Lie groupoids. In fact, the specific examples of morphisms we will use in this section all happen to be smooth functors, rather than general bibundles.
				
				In this setting, we can define not just abstract 2-groups, but \emph{smooth} 2-groups: they are group objects in the bicategory of smooth stacks (or equivalently in $\Bibun$). 
			\end{conventions}

			\begin{defn}
				Let $\cG$ be a smooth 2-group. Let $X$ be a smooth manifold. A \emph{principal $\cG$-bundle} over $X$ is a stack $\cP \xrightarrow{\pi} X$ equipped with a right action of $\cG$ on $\cP$, such that $\pi$ is $\cG$-invariant and $\cP \to X$ is locally trivial. 
			\end{defn}
			Let us expand a little on several of the terms used in this definition. First of all, an \emph{action} of $\cG$ on $\cP$ consists of a morphism of stacks
			\begin{align*}
				\Act \colon \cP \times \cG \to \cG
			\end{align*}
			together with 2-morphisms evincing the usual axioms of an action. That is, there are 2-morphisms making the following diagrams commute. 
			
			\newcommand{\mult}{\mathsf{mult}}
			\begin{align*}
				\begin{tikzpicture}[baseline=(basepoint)];
					\node (A) at (0,0) {$\cP \times \cG \times \cG$};
					\node (B) at (4,0) {$\cP \times \cG$};
					\node (C) at (0, -2) {$\cP \times \cG$};
					\node (D) at (4,-2) {$\cG;$};
					\draw[->] (A) to node [above] {$\Act \times \id_\cG$} (B);
					\draw[->] (A) to node [left] {$\id_\cP \times \mult $} (C);
					\draw[->] (B) to node [right] {$\Act$} (D);
					\draw[->] (C) to node [below] {$\Act$} (D);
					\draw[->, double, double distance=1.2pt, shorten <=35pt, shorten >=35pt] (B) to (C);
					\node (E) at (6,0) {$\cP$};
					\node (F) at (10,0) {$\cP\times \cG$};
					\node (G) at (8,-2) {$\cP.$};
					\draw[->] (E) to node [above] {$\id_\cP \times 1_\cG$} (F);
					\draw[->] (F) to node [below right] {$\Act$} (G);
					\draw[->] (E) to node [below left] {$\id_\cP$} (G);
					\draw[->, double, double distance=1.2pt, shorten <=30pt, shorten >=30pt] (9.5,0) to (7,-1.5);
				\end{tikzpicture}
			\end{align*}
			Secondly, the condition of \emph{local triviality} means that there exists a surjective submersion $u \colon Y \to X$ such that the pullback $u^*\cP$ is trivial: there is an equivariant isomorphism $u^*\cP \EquivTo Y \times \cG$ over $Y$. Note that equivariance is additional data, not just a condition on the morphism. 
			
			\begin{defn}
				Let $(\pi_i \colon \cP_i \to X, \Act_i), i=1,2$ be two principal $\cG$-bundles over $X$. A \emph{morphism} of principal $\cG$-bundles from $(\pi_1 \colon \cP_1 \to X, \Act_1)$ to $(\pi_2 \colon \cP_2 \to X, \Act_2)$ is given by a $\cG$-equivariant morphism $(T, \epsilon) \colon \cP_1 \to \cP_2$ over $X$. 
			\end{defn}
			
			\begin{defn}\label{def: Schommer-Pries bundles}
				The bicategory $\Bun_\cG^{\text{geo}}(X)$ of principal $\cG$-bundles over $X$ is the bicategory with objects principal $\cG$-bundles, 1-morphisms given by (ref), and 2-morphisms given by natural transformations of equivariant morphisms. 
			\end{defn}
			
			The bicategory of flat $\cG$-bundles is simply the bicategory $\Bun_{\cG^\text{disc}}^{\text{geo}}$ of bundles for the discretization $\cG^\text{disc}$ of the smooth 2-group $\cG$. If the 2-group is discrete to begin with (such as our 2-group $\cG(G, A, \alpha)$ equipped with the discrete topology), all bundles are automatically flat. 
			
			Let us now sketch the comparison between the geometric definition of principal $\cG$-bundles and \v Cech data for a good cover $\bU= \{U_i\}$ of $X$ for discrete $\cG = \cG(G, A, \alpha)$. We note that we follow the ideas of \cite{BCMNP} for the variant of the \v Cech bicategory given there (see Remark \ref{rmk: compare BCMNP} and \cite[Lemma 4.16]{BCMNP}); we provide the key ideas of the proof here to indicate (a) that any principal bundle admits \v Cech data over the good cover $\bU$ (Construction \ref{constr: extract Cech data}) and (b) that the construction extends to our setting of non-trivial $G$-action on $A$ (Construction \ref{constr: gluing}). 
			\begin{constr}\label{constr: extract Cech data}
				Let $\cP \to X$ be a flat principal $\cG$-bundle on $X$. For any $i \in I$, $U_i$ is contractible, so the restriction of $\cP$ to $U_i$ (i.e. the homotopy fibre product $\cP \times_X U_i$) admits a section. Equivalently, this gives a trivialization $\varphi_i \colon U_i \times \cG \to \cP \times_X U_i$. Over $U_{ij}$, we obtain the following sequence of $\cG$-equivariant isomorphisms: 
				\begin{align*}
					U_{ij} \times \cG \xrightarrow{\varphi_j} (\cP \times_X U_j) \times_{U_j}U_{ij} \cong (\cP \times_X U_i) \times_{U_i}U_{ij} \xrightarrow{\varphi_i^{-1}} U_{ij} \times \cG.
				\end{align*}
				But a $\cG$-equivariant automorphism of $U_{ij} \times \cG$ is equivalent to a bibundle $\hat{\rho}_{ij}\colon U_{ij} \to \cG$. Since $U_{ij}$ is contractible (or empty), this bibundle is always equivalent to a smooth functor, and hence given by a locally constant map $U_{ij} \to G$ (i.e. an element of $G$, if $(i,j) \in \nonempty{2}$). 
				
				The second piece of \v Cech data comes from the natural isomorphism of bibundles over $U_{ijk}$ for all $(i,j,k) \in \nonempty{3}$:
				\begin{align*}
					\varphi_i^{-1} \circ \varphi_k \EquivTo (\varphi_i^{-1} \circ \varphi_j) \circ (\varphi_j^{-1} \circ \varphi_k).
				\end{align*}
				This is equivalent to an isomorphism of bibundles $\hat{\rho}_{ik}$ and $\hat{\rho}_{ij} \hat{\rho}_{jk}$ from $U_{ijk}$ to $\cG$. Since we have chosen identifications of both of these bibundles with smooth functors, and smooth functors form a full subcategory of $\Bibun$, this natural isomorphism is equivalent to a smooth natural transformation of the smooth functors $U_{ijk} \to \cG$, which is just a smooth function $U_{ijk} \to A$ (i.e. an element $\gamma_{ijk} \in A$). It is not hard to check that the pair $(\rho, \gamma)$ satisfies the conditions of Definition \ref{def: Cech data objects}.
			\end{constr}
			
			\begin{constr}\label{constr: gluing}
				Let $(\rho, \gamma)$ be an object as in Definition \ref{def: Cech data objects}. We define $\cP$ to be the Lie groupoid
				\begin{align*}
					\begin{array}{c}
						\bigsqcup_{ij} U_{ij} \times G \times A  \\
						s\downarrow\downarrow t \\
						\bigsqcup_i U_i \times G
					\end{array}
				\end{align*}
				where $s(y, g,a) = (y, \rho_{ij}g) \in U_i \times G$ and $t(y,g,a)=(y,g) \in U_j \times G$ for $(y,g,a) \in U_{ij} \times G \times A$, and with composition given by 
				\begin{align*}
					(y,g,b) \circ (y, \rho_{jk}g, a) = \left(y, g, \frac{a(\rho_{ij}\cdot b)}{\gamma_{ijk}\alpha(\rho_{ij}, \rho_{jk}, g)}\right).
				\end{align*}
				Property \eqref{eq: cocycle condition for rho} ensures that the composition has the correct source and target, while property \eqref{eq: condition for gamma} ensures associativity. Meanwhile, \eqref{eq: normalisation condition for eta} implies that the identity morphism for $(y,g) \in U_i \times G$ is given by $(y,g,1_A) \in U_{ii} \times G \times A$. 
				
				The Lie groupoid $\cP$ has a natural smooth functor to $X$, and admits the following right action of $\cG$:
				\begin{align*}
					\begin{array}{ccc}
						\bigsqcup_{ij} U_{ij} \times G \times A \times G \times A & \xrightarrow{(y,g,a,h,b) \mapsto (y,gh, a(g \cdot b))} & \bigsqcup_{ij} U_{ij} \times G \times A \\
						s\downarrow\downarrow t  & & s\downarrow\downarrow t \\
						\bigsqcup_i U_i \times G \times G & \xrightarrow{\quad (y,g,h) \mapsto (y,gh) \quad}& \bigsqcup_i U_i \times G. 
					\end{array}
				\end{align*}
				It is straightforward to check that this bundle trivializes over $\bigsqcup_{i \in I} U_i$, and hence forms a principal $\cG$-bundle. 
			\end{constr}
			
			\section{\texorpdfstring{Explicit tables of the data of $\Bun_\cG(S^2)$}{Explicit tables of the data of bunG}}\label{sec: appendix table}
			
			\newcommand{\action}{\mathbin{\boldsymbol{\cdot}} }
			
			\begin{table}[H]\label{table: objects rho}
				\renewcommand{\arraystretch}{1.25}
				\centering
				\begin{tabular}{|C *{4}{|C}|}
					\hline
					i & j=1 &j=2 & j=3 & j=4  \\
					\hline
					1 & 1 & \rho_{12} & \rho_{12}\rho_{23}   & \rho_{12}\rho_{23}\rho_{34} \\
					2 & \rho_{12}^{-1} & 1 & \rho_{23} & \rho_{23}\rho_{34} \\
					3 & \rho_{23}^{-1}\rho_{12}^{-1} & \rho_{23}^{-1} & 1 & \rho_{34} \\
					4 & \rho_{34}^{-1}\rho_{23}^{-1}\rho_{12}^{-1} & \rho_{34}^{-1}\rho_{23}^{-1} & \rho_{34}^{-1} & 1\\[0.5ex]
					\hline
				\end{tabular}
				\caption{Evaluations of $\rho_{ij}$ when $\rho_{12}$, $\rho_{23}$ and $\rho_{34}$ are fixed.}
				\label{tab: rho_ij possible}
			\end{table}
			
			\begin{table}[H] \label{table: objects gamma}
				\centering
				\renewcommand{\arraystretch}{1.75}
				\resizebox{\columnwidth}{!}{
					\begin{tabular}{*{6}{|C}|}
						\hline
						\multicolumn{2}{|C|}{\gamma_{ijk}} & \multirow{2}{*}{$k=1$} & \multirow{2}{*}{$k=2$} & \multirow{2}{*}{$k=3$} & \multirow{2}{*}{$k=4$} \\
						\cline{1-2}
						i & j &  &  &  &  \\
						\hline 
						1 & 2 & \gamma_{121} & 1 & \gamma_{123} & \gamma_{124} \\
						
						1 & 3 & \gamma_{131} & \frac{\pr{\rho_{12}\action \gamma_{232}\alpha_{2132}}\alpha_{1213}}{\gamma_{123}\alpha_{1212}}& 1 &\gamma_{134}\\[1ex]
						
						1 & 4 & \gamma_{141} & \frac{\pr{\rho_{12}\action \gamma_{242}\alpha_{2142}}\alpha_{1214}}{\gamma_{124}\alpha_{1212}} & \frac{\pr{\rho_{13}\action \gamma_{343}\alpha_{3143}}\alpha_{1314}}{\gamma_{134}\alpha_{1313}}  & 1 \\[1ex]
						
						2 & 1 & 1 & \rho_{21}\action \gamma_{121}\alpha_{1212} & \frac{\pr{\rho_{21}\action \gamma_{121}\alpha_{1212}}\alpha_{2123}}{\rho_{21}\action \gamma_{123}} & \frac{\pr{\rho_{21}\action \gamma_{121}\alpha_{1212}}\alpha_{2124}}{\rho_{21}\action \gamma_{124}} \\[1ex]
						
						2 & 3 & \frac{\rho_{21}\action \gamma_{123}\gamma_{131}\alpha_{1231}}{\rho_{21}\action \gamma_{121}} & \gamma_{232} & 1 & \gamma_{234} \\[1ex]
						
						2 & 4 & \frac{\rho_{21} \action \gamma_{124}\gamma_{141}\alpha_{1241}}{\rho_{21} \action \gamma_{121}} & \gamma_{242} & \frac{\pr{\rho_{23}\action \gamma_{343}\alpha_{3243}}\alpha_{2324}}{\gamma_{234}\alpha_{2323}} & 1\\[1ex]
						
						3 & 1 & 1 & \frac{\rho_{31}\action \gamma_{123}\gamma_{131}\alpha_{1313}}{\pr{\rho_{32}\action \gamma_{232}\alpha_{2323}}\alpha_{3123}} & \rho_{31} \action \gamma_{131}\alpha_{1313} 
						& \frac{\pr{\rho_{31}\action \gamma_{131}\alpha_{1313}}\alpha_{3134}}{\rho_{31}\action \gamma_{134}}\\[1ex]
						
						3 & 2 & \frac{\pr{\rho_{32} \action \gamma_{232} \alpha_{2321}} \pr{\rho_{31} \action \gamma_{121}}}{\rho_{31} \action \gamma_{123}\gamma_{131} \alpha_{1231}} 
						& 1  & \rho_{32} \action \gamma_{232}\alpha_{2323} & \frac{\pr{\rho_{32}\action \gamma_{232}\alpha_{2323}}\alpha_{3234}}{\rho_{32}\action \gamma_{234}}\\[1ex]
						
						3 & 4 & \frac{\rho_{31} \action \gamma_{134}\gamma_{141}\alpha_{1341}}{\rho_{31} \action \gamma_{131}} 
						& \frac{\rho_{32} \action \gamma_{234}\gamma_{242}\alpha_{2342}}{\rho_{32} \action \gamma_{232}} 
						& \gamma_{343} & 1 \\[1ex]
						
						4 & 1 & 1 & \frac{\rho_{41}\action \gamma_{124}\gamma_{141}\alpha_{1414}}{\pr{\rho_{42}\action \gamma_{242}\alpha_{2424}}\alpha_{4124}}  & \frac{\rho_{41}\action \gamma_{134}\gamma_{141}\alpha_{1414}}{\pr{\rho_{43}\action \gamma_{343}\alpha_{3434}}\alpha_{4134}}  &\rho_{41} \action \gamma_{141}\alpha_{1414} \\[1ex]
						
						4 & 2 & \frac{\pr{\rho_{42} \action \gamma_{242} \alpha_{2421}} \pr{\rho_{41} \action \gamma_{121}}}{\rho_{41} \action \gamma_{124}\gamma_{141} \alpha_{1241}} 
						& 1 & \frac{\rho_{42}\action \gamma_{234}\gamma_{242}\alpha_{2424}}{\pr{\rho_{43}\action \gamma_{343}\alpha_{3434}}\alpha_{4234}}  & \rho_{42} \action \gamma_{242}\alpha_{2424} \\[1ex]
						
						4 & 3 & \frac{\pr{\rho_{43} \action \gamma_{343} \alpha_{3431}} \pr{\rho_{41} \action \gamma_{131}}}{\rho_{41} \action \gamma_{134}\gamma_{141} \alpha_{1341}} 
						& \frac{\pr{\rho_{43} \action \gamma_{343} \alpha_{3432}} \pr{\rho_{42} \action \gamma_{232}}}{\rho_{42} \action \gamma_{234}\gamma_{242} \alpha_{2342}} 
						& 1 & \rho_{43} \action \gamma_{343}\alpha_{3434}\\[2ex] 
						\hline
				\end{tabular}}
				\renewcommand{\arraystretch}{1}
				\caption{Evaluations of $\gamma_{ijk}$ when $\gamma_{121}$, $\gamma_{131}$, $\gamma_{141}$, $ \gamma_{232}$, $\gamma_{242}$, $\gamma_{343}$, $\gamma_{123}$, $\gamma_{124}$, $\gamma_{134}$ and $\gamma_{234}$ are fixed. Convention : $\rho_{ij} \action \gamma_{klmn} \alpha_{opqr} = \rho_{ij} \action (\gamma_{klmn} \alpha_{opqr})$ and $\alpha_{abcd} = \alpha(\rho_{ab}, \rho_{bc}, \rho_{cd})$.  }
				\label{tab: gamma possibles}
			\end{table}
			
			\begin{table}[H] \label{table: 1-morphisms t}
				\renewcommand{\arraystretch}{1.5}
				\centering
				\begin{tabular}{|C|C C C C|}
					\hline 
					i & 1 & 2 & 3 & 4 \\ \hline
					t_i & t_1 & \rho'_{21}t_1\rho_{12} & \rho'_{31}t_1\rho_{13} & \rho'_{41}t_1\rho_{14}\\[0.5ex]
					\hline
				\end{tabular}
				\renewcommand{\arraystretch}{1}
				\caption{Evaluations of $t_i$ when $t_1$ is fixed.}
				\label{tab: t_i}
			\end{table}
			
			\begin{table}[H] \label{table: 1-morphisms eta}
				\centering
				\renewcommand{\arraystretch}{2}
				\resizebox{\columnwidth}{!}{
					\begin{tabular}{|C *{4}{|C}|}
						\hline
						i & j=1 &j=2 & j=3 & j=4  \\[0.5ex]
						\hline
						1 & 1 & \eta_{12} & \frac{\eta_{12}\pr{\rho'_{12}\action \eta_{23}}\pr{t_1\action \gamma_{123}} \alpha\pr{\rho'_{12},t_2,\rho_{23}}}{\gamma'_{123} \alpha\pr{t_1,\rho_{12},\rho_{23}} \alpha\pr{\rho'_{12},\rho'_{23},t_3}} 
						& \frac{\eta_{12}\pr{\rho'_{12}\action \eta_{24}}\pr{t_1\action \gamma_{124}}\, \alpha\pr{\rho'_{12},t_2,\rho_{24}}}{\gamma'_{124} \, \alpha\pr{t_1,\rho_{12},\rho_{24}} \, \alpha\pr{\rho'_{12},\rho'_{24},t_4}} \\
						
						2 & \rho_{21}\action \frac{\gamma'_{121} \alpha(t_1,\rho_{12},\rho_{21}) \alpha(\rho'_{12},\rho'_{21},t_1)}{\eta_{12}\gamma_{121} \alpha(\rho'_{12},t_2,\rho_{21})} 
						& 1 & \eta_{23} 
						& \frac{\eta_{23}\pr{\rho'_{23}\action \eta_{34}}\pr{t_2\action \gamma_{234}} \alpha\pr{\rho'_{23},t_3,\rho_{34}}}{\gamma'_{234} \alpha\pr{t_2,\rho_{23},\rho_{34}} \alpha\pr{\rho'_{23},\rho'_{34},t_4}} \\
						
						3 &  \rho_{31}\action \frac{\gamma'_{131} \alpha(t_1,\rho_{13},\rho_{31}) \alpha(\rho'_{13},\rho'_{31},t_1)}{\eta_{13}\gamma_{131} \alpha(\rho'_{13},t_3,\rho_{31})}
						& \rho_{32}\action \frac{\gamma'_{232} \alpha(t_2,\rho_{23},\rho_{32}) \alpha(\rho'_{23},\rho'_{32},t_2)}{\eta_{23}\gamma_{232} \alpha(\rho'_{23},t_3,\rho_{32})}
						& 1 & \eta_{34} \\
						
						4 & \rho_{41}\action \frac{\gamma'_{141} \alpha(t_1,\rho_{14},\rho_{41}) \alpha(\rho'_{14},\rho'_{41},t_1)}{\eta_{14}\gamma_{141} \alpha(\rho'_{14},t_4,\rho_{41})}
						& \rho_{42}\action \frac{\gamma'_{242} \alpha(t_2,\rho_{24},\rho_{42}) \alpha(\rho'_{24},\rho'_{42},t_2)}{\eta_{24}\gamma_{242} \alpha(\rho'_{24},t_4,\rho_{42})} 
						& \rho_{43}\action \frac{\gamma'_{343} \alpha(t_3,\rho_{34},\rho_{43}) \alpha(\rho'_{34},\rho'_{43},t_3)}{\eta_{34}\gamma_{343} \alpha(\rho'_{34},t_4,\rho_{43})} & 1 \\[1.25ex]
						\hline
				\end{tabular}}
				\renewcommand{\arraystretch}{1}
				\caption{Evaluations of $\eta_{ij}$ when $\eta_{12}$, $\eta_{23}$ and $\eta_{34}$ are fixed. }
				\label{tab: eta_ij possible}
			\end{table}
			
			\begin{table}[H]\label{table: 2-morphisms}
				\renewcommand{\arraystretch}{2}
				\centering
				\begin{tabular}{|C|C C C C|}
					\hline 
					i & 1 & 2 & 3 & 4 \\[0.5ex] \hline 
					\omega_i & \omega_1 & \frac{\pr{\rho'_{21}\action\omega_1}\eta_{21}}{\eta'_{21}} 
					& \frac{\pr{\rho'_{31}\action \omega_1} \eta_{31}}{\eta'_{31}} 
					& \frac{\pr{\rho'_{41} \action \omega_1} \eta_{41}}{\eta'_{41}}\\[1.25ex]
					\hline
				\end{tabular}
				\renewcommand{\arraystretch}{1} 
				\caption{Evaluations of $\omega_i$ when $\omega_1$ is fixed.}
				\label{tab: omega possibles}
			\end{table}

			\printbibliography[heading=bibintoc, title={References}]

%
%
	
	
	\vspace{5mm}
	\noindent
	Emily Cliff, Yves Lolelo, and Sandrine St-Cyr \\
	Département de mathématiques \\
	Université de Sherbrooke \\
2500, boulevard de l'Université \\Sherbrooke (Québec) Canada

	
\end{document}